\newtheorem{thm}{Theorem}[section]
\newtheorem{prop}[thm]{Proposition}
\newtheorem{cor}[thm]{Corollary}
\newtheorem{conj}[thm]{Conjecture}
\theoremstyle{definition}
\newtheorem{defn}[thm]{Definition}
\newcommand{\sbu}{{\raisebox{1pt}{\scaleto{\bullet}{2.5pt}}}}
\newcommand{\bu}{\sbu}
\newcommand{\PP}{\mathbb{P}}
\newcommand{\ZZ}{\mathbb{Z}}
\newcommand{\cH}{\mathcal{H}}
\newcommand{\cO}{\mathcal{O}}
\newcommand{\cP}{\mathcal{P}}
\newcommand{\cQ}{\mathcal{Q}}
\newcommand{\cV}{\mathcal{V}}
\newcommand{\comment}[1]{}
\newcommand{\on}{\operatorname}
\newcommand{\Spec}{\operatorname{Spec}}
\newcommand{\Gr}{\mathrm{Gr}}
\newcommand{\cW}{\mathcal{W}}
\newcommand{\inv}{\operatorname{inv}}
\newcommand{\Fr}{\mathrm{Fr}}
\newcommand{\cpb}{\mathcal{P}^{\sbu}}
\newcommand{\cpbs}{\cpb_1, \cdots, \cpb_\ell}
\newcommand{\cqb}{\mathcal{Q}^{\sbu}}
\newcommand{\cvb}{\mathcal{V}^{\sbu}}
\newcommand{\cub}{\mathcal{U}^{\sbu}}
\newcommand{\cwb}{\mathcal{W}^{\sbu}}
\newcommand{\pb}{P^\sbu}
\newcommand{\qb}{Q^\sbu}
\newcommand{\vb}{V^\sbu}
\newcommand{\ab}{A^\sbu}
\newcommand{\bb}{B^\sbu}
\newcommand{\GL}{\mathrm{GL}}
\newcommand{\sigs}{\sigma_1,\cdots,\sigma_\ell}
\newcommand{\Grdab}{G^{r,\alpha,\beta}_d}
\newcommand{\Rab}{R_{A^\sbu,B^\sbu}}
\newcommand{\Rst}{R_{\sigma,\tau}}
\newcommand{\Ds}{D_{\sigma}}
\newcommand{\Dst}{D_{\sigma,\tau}}
\newcommand{\Rabj}{R_{A_j^\sbu,B_j^\sbu}}
\newcommand{\hide}[1]{}
\DeclareMathOperator{\Pic}{Pic}
\DeclareMathOperator{\Fl}{Fl}
\DeclareMathOperator{\End}{End}
\DeclareMathOperator{\Fix}{Fix}
\DeclareMathOperator{\Ess}{Ess}
\newtheorem{Definition}[thm]{Definition}
\newenvironment{definition}
  {\begin{Definition}\rm}{\end{Definition}}
\newtheorem{Example}[thm]{Example}
\newenvironment{example}
  {\begin{Example}\rm}{\end{Example}}
\newtheorem{Fact}[thm]{Fact}
\newenvironment{fact}
  {\begin{Fact}\rm}{\end{Fact}}
\newtheorem{Theorem}[thm]{Theorem}
\newenvironment{theorem}
  {\begin{Theorem}\rm}{\end{Theorem}}
\newtheorem{Lemma}[thm]{Lemma}
\newenvironment{lemma}
  {\begin{Lemma}\rm}{\end{Lemma}}
\newtheorem{Remark}[thm]{Remark}
\newenvironment{remark}
  {\begin{Remark}\rm}{\end{Remark}}
\newtheorem{Proposition}[thm]{Proposition}
\newenvironment{proposition}
  {\begin{Proposition}\rm}{\end{Proposition}}
\newtheorem{Corollary}[thm]{Corollary}
\newenvironment{corollary}
  {\begin{Corollary}\rm}{\end{Corollary}}
\theoremstyle{remark}
\newcommand \exnow[1]{\begin{example}{#1}\end{example}}
\newcommand \lemnow[1]{\begin{lemma}{#1}\end{lemma}}
\newcommand \enumnow[1]{\begin{enumerate}{#1}\end{enumerate}}
  \def\MR#1{}
\title{Relative Richardson varieties}
\author[M. Chan]{Melody Chan}\address{Department of Mathematics, Brown University, Box
1917, Providence, RI 02912}\email{melody\_chan@brown.edu}
\author[N. Pflueger]{Nathan Pflueger}\address{Department of Mathematics and Statistics, Amherst College, Amherst, MA 01002}\email{npflueger@amherst.edu}
\date{\today}
\begin{document}

\begin{abstract}
  A Richardson variety in a flag variety is an intersection of two Schubert varieties defined by transverse flags.  We define and study {\em relative Richardson varieties}, which are defined over a base scheme with a vector bundle and two flags. To do so, we generalize transversality of flags to a relative notion, versality, that allows the flags to be non-transverse over some fibers. Relative Richardson varieties share many of the geometric properties of Richardson varieties. We generalize several geometric and cohomological facts about Richardson varieties to relative Richardson varieties. We also prove that the local geometry of a relative Richardson variety is governed, in a precise sense, by the two intersecting Schubert varieties, giving a generalization, in the flag variety case, of a theorem of Knutson-Woo-Yong; we also generalize this result to intersections of arbitrarily many relative Schubert varieties.  We give an application to Brill-Noether varieties on elliptic curves, and a conjectural generalization to higher genus curves.
 \end{abstract}

\maketitle

\section{Introduction} \label{sec:intro}

A {\em Richardson variety} is an intersection of two Schubert varieties defined with respect to transverse flags in a vector space. Here we are concerned with Schubert varieties in Grassmannians and flag varieties.  A very simple example is the subvariety of the Grassmannian $\mathbb{G}(1,3)$ parametrizing lines in $\PP^3$ that meet two fixed skew lines.  

Richardson varieties are well-known to be rational, normal, and Cohen-Macaulay, and to have rational singularities, hence they have Euler characteristic 1.  Moreover, the singularities of a Richardson variety are governed entirely by the singularities of the two Schubert varieties: Knutson, Woo and Yong show that the singular points of a Richardson variety are exactly the points that are singular in either one of the Schubert varieties \cite{knutson-woo-yong-singularities}. 

In this article we generalize all of these basic results to a relative context. Throughout this paper, we fix an algebraically closed field $k$ of any characteristic. All schemes are assumed to be finite-type over $k$, and by a ``point'' of a scheme we will always mean a closed point.
We consider $\ell$ flag bundles within a vector bundle on a base scheme $S$, and we allow the flag bundles to become nontransverse over some points of $S$ in a controlled manner: subject to the condition of \emph{versality}.

Given a rank-$d$ vector bundle $\cH$ over a base scheme $S$, write $\Fr(\cH)$ for the frame bundle of $\cH$ and $\Fl(d)$ for the variety of complete flags in $k^d$. Then we define complete flag bundles $\cpb_1,\ldots,\cpb_\ell$  to be {\em versal} if the induced map $\Fr(\cH)\to\Fl(d)^\ell$ is a smooth morphism.  Versality usefully generalizes transversality to a relative context such that properties enjoyed locally by transverse intersections are still enjoyed by versal intersections. But versality is more general than transversality in every fiber.  

A simple example is a 1-parameter family of two complete flags in $\PP^3$ which are transverse except over a reduced point $p$, where the two 2-dimensional subspaces (lines $L_1, L_2$ in $\PP^3$) meet at a point rather than being skew. At each point of this family, one may consider the parameter space of lines in $\PP^3$ meeting both $L_1$ and $L_2$. These parameter spaces form what we will call a \emph{relative Richardson variety}, which is in this case a family of smooth quadric surfaces (parameterizing lines through two fixed skew lines) degenerating to a transverse pair of planes over the special point $p$ (one plane parameterizes lines through the intersection point of the fixed lines, while the other parameterizes lines coplanar with the two fixed lines). We now describe the general situation.

Suppose $\cH$ is a rank $d$ vector bundle over a base scheme $S$, and $\cpb$ is a complete flag of subbundles.  Fix a nest of sets
\begin{equation}\label{eq:sample-sequence}
\ab = (\{0,\ldots,d\!-\!1\} = A^{i_0} \supset A^{i_1} \supset \cdots \supset A^{i_s} = \emptyset)
\end{equation}
in which $|A^{i_j}| = d-i_j$.
Let $\pi\colon\Fl(i_0,\ldots,i_s;\cH)\to S$ denote the relative partial flag variety, equipped with tautological flag bundle $\cvb$ inside the rank $d$ vector bundle $\pi^* \cH.$
Define the $S$-scheme $X_{\ab}(\cpb)$ to be the subscheme 
$$\{x\in \Fl(i_0,\ldots,i_s;\cH): \dim (\cV^{i_j})_x \cap (\pi^* \cP^a)_x \ge \#\{a'\in A^{i_j} : a'\ge a\} \text{ for all }j,a \},$$
with scheme structure from its description as a degeneracy locus in the usual way, as recalled in \S\ref{ssec:degen}.
The first main theorem is as follows. The proof appears at the end of Section~\ref{sec:rr}.
\begin{thm} \label{thm:relativeRichardson}
Let $S$ be a smooth irreducible $k$-scheme, let $\cH$ be a rank-$d$ vector bundle on $S$, and let $\cP^\bu,\cQ^\bu$ be a versal pair of complete flags in $\cH$. 
Let $\ab, \bb$ be nests of sets as in Equation \eqref{eq:sample-sequence}.

\begin{enumerate}
\item The relative Richardson variety $$\Rab = X_{\ab}(\cpb) \cap X_{\bb}(\cqb)$$ is normal and Cohen-Macaulay of pure codimension $\inv(\omega \sigma) + \inv(\omega \tau)$ in the partial flag variety $\Fl(i_0,\ldots,i_s;\cH)$.
Here $\inv$ is the inversion number, $\omega$ is the descending permutation, and the permutations $\sigma=\sigma(\ab)$, $\tau =\sigma(\bb)$ are the decreasing completions of $\ab$ and $\bb$, as defined in \S\ref{ssec:perms}.
\item Letting $S'$ denote the scheme-theoretic image of $\Rab$ in $S$, the morphism $\pi\colon\Rab\rightarrow S'$ satisfies
\begin{eqnarray*}
\pi_\ast \cO_{\Rab} &=& \cO_{S'}, \textrm{ and}\\
R^i \pi_\ast \cO_{\Rab} &=& 0 \textrm{ for all $i > 0$}.
\end{eqnarray*}
\end{enumerate}
\end{thm}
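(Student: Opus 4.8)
The plan is to deduce everything from the corresponding statements for the \emph{universal} relative Richardson variety, moving between the two via versality. Write $\Fl_P=\Fl(i_0,\ldots,i_s;k^d)$, and let $\mathcal Z\subseteq\Fl_P\times\Fl(d)^2$ be the relative Richardson variety for the tautological pair of complete flags over $\Fl(d)^2$: the degeneracy locus cut out by the rank conditions of $\ab$ against the first tautological flag together with those of $\bb$ against the second. First I would observe that the definition of $\Rab$ makes $\cpb,\cqb$ correspond to the versality morphism $\Fr(\cH)\to\Fl(d)^2$, so that the base change of $\Rab$ along the faithfully flat map $\Fr(\cH)\to S$ is canonically $\Fr(\cH)\times_{\Fl(d)^2}\mathcal Z$. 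Since normality, Cohen--Macaulayness and pure codimension descend along $\Fr(\cH)\to S$ and ascend along the smooth morphism $\Fr(\cH)\times_{\Fl(d)^2}\mathcal Z\to\mathcal Z$, part (1) for $\Rab$ reduces to part (1) for $\mathcal Z$ (restricting $\mathcal Z$ to the open $\GL_d$-stable image of the versality map changes nothing, these properties being local). For part (2), flat base change for $R^i\pi_\ast$ along the flat morphism induced on scheme-theoretic images reduces the claim to $\rho_\ast\cO_{\mathcal Z}=\cO_{\mathcal T}$ and $R^i\rho_\ast\cO_{\mathcal Z}=0$ for $i>0$, where $\mathcal T\subseteq\Fl(d)^2$ is the scheme-theoretic image of $\mathcal Z$ and $\rho$ the projection.

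It then remains to treat $\mathcal Z$. For irreducibility and the codimension count I would project $\mathcal Z$ to the first $\Fl(d)$-factor: the fibre over a flag $F^\bu$ is the total space, over the Schubert variety $X_\ab(F^\bu)\subseteq\Fl_P$, of the family $[V]\mapsto\{G^\bu\in\Fl(d):[V]\in X_\bb(G^\bu)\}$, and the members of this family are mutual $\GL_d$-translates, hence irreducible of one constant dimension (equal to $\binom d2-\inv(\omega\tau)$, by the incidence-correspondence count in $\Fl_P\times\Fl(d)$). A proper family over an irreducible base all of whose fibres are irreducible of a fixed dimension has irreducible total space; applying this twice shows $\mathcal Z$ is irreducible of dimension $\binom d2+(\dim\Fl_P-\inv(\omega\sigma))+(\binom d2-\inv(\omega\tau))$, i.e.\ of pure codimension $\inv(\omega\sigma)+\inv(\omega\tau)$ in $\Fl_P\times\Fl(d)^2$; over the transverse locus it restricts to the genuine Richardson bundle of that codimension. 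Given that $\mathcal Z$ attains its expected codimension as a degeneracy locus, Cohen--Macaulayness is then supplied by the formalism of \S\ref{ssec:degen}. For normality I would use Serre's criterion: $S_2$ is automatic, and for $R_1$ one bounds the singular locus --- over the transverse locus $\mathcal Z$ is the Zariski-locally trivial Richardson bundle, which is normal, and over each nongeneric $\GL_d$-orbit of $\Fl(d)^2$ one estimates codimensions directly (alternatively, in positive characteristic and then by reduction mod $p$, a compatible Frobenius splitting of $\Fl_P\times\Fl(d)^2$ through $\mathcal Z$ already gives reducedness and is combined with the dimension count).

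For part (2) applied to $\rho\colon\mathcal Z\to\mathcal T$ I would produce a resolution $\nu\colon\widetilde{\mathcal Z}\to\mathcal Z$ --- fibred over a Bott--Samelson-type resolution of one of the two Schubert conditions, compatibly with the projection above --- with $R\nu_\ast\cO_{\widetilde{\mathcal Z}}=\cO_{\mathcal Z}$ (i.e.\ $\mathcal Z$ has rational singularities, which the splitting/degeneracy-locus input yields along the way) and such that the composite $\widetilde{\mathcal Z}\to\mathcal T$ satisfies $R(\rho\circ\nu)_\ast\cO_{\widetilde{\mathcal Z}}=\cO_{\mathcal T}$ by a Kempf-type vanishing computation on flag bundles; this forces $R\rho_\ast\cO_{\mathcal Z}=\cO_{\mathcal T}$, as required.

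The step I expect to be the real obstacle is Cohen--Macaulayness and normality of the universal $\mathcal Z$ over the \emph{deep} (highly nontransverse) $\GL_d$-orbits of $\Fl(d)^2$, where the fibres of $\rho$ jump in dimension: there the local equations of $\mathcal Z$ fail to form a regular sequence fibrewise even though they remain one on the total space, so the control must come from the global degeneracy-locus picture (or a judicious Frobenius splitting) rather than from any fibrewise transversality. By comparison, the reductions via versality and the irreducibility/dimension count are formal, and part (2) follows fairly routinely once a suitable resolution is in hand.
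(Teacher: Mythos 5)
Your first reduction is sound: pulling back along the frame bundle and descending along the faithfully flat map $\Fr(\cH)\to S$ correctly reduces part (1) to the corresponding statement for a ``universal'' $\mathcal Z\subseteq\Fl(i_0,\ldots,i_s;k^d)\times\Fl(d)^2$, and the fibration over the first $\Fl(d)$-factor does give irreducibility and the expected codimension. But you have honestly located your own gap, and it is a real one: Cohen--Macaulayness and normality of $\mathcal Z$ over the deep orbits. The scheme $\mathcal Z$ is an \emph{intersection of two} degeneracy loci (the tautological flag against each of the two tautological flags on $\Fl(d)^2$), so Fact~\ref{fact:fultonDegen} does not apply to it directly, and off the transverse stratum it is not a Richardson bundle; ``the global degeneracy-locus picture'' and ``a judicious Frobenius splitting'' are hopes, not arguments. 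The missing idea is a \emph{second} application of the same versality trick. On the relative flag variety over $\Fl(d)^2$, the triple $(\cvb,\pi^\ast\cvb_1,\pi^\ast\cvb_2)$ is itself versal (Lemma~\ref{lem:versalityCriterion3}); comparing frame bundles (the ``staircase'' of Figure~\ref{fig:staircase}) shows this triple is equivalent via smooth morphisms, in the sense of Definition~\ref{def:eqViaSmooth}, to the triple $(F^\bu,\cvb_1,\cvb_2)$ on $\Fl(d)^2$ with $F^\bu$ \emph{fixed}. In that model the degeneracy locus is
$\pi_1^{-1}D_\sigma(F^\bu;\cvb_1)\cap\pi_2^{-1}D_\tau(F^\bu;\cvb_2)\cong X_{\sigma^{-1}}\times X_{\tau^{-1}}$,
a product of Schubert varieties in $\Fl(d)$, and normality, Cohen--Macaulayness, the codimension, and the smooth locus all follow at once from the single-Schubert-variety facts. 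This is exactly Theorem~\ref{thm:kwy}, Proposition~\ref{prop:codimViaKWY}, and Corollary~\ref{cor:kwy-gen}; your proposal stops one reduction short of where the difficulty dissolves.

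For part (2) you propose a Bott--Samelson-type resolution of $\mathcal Z$ plus Kempf vanishing to deduce $R\rho_\ast\cO_{\mathcal Z}=\cO_{\mathcal T}$. That is a genuinely different route from the paper's and might be made to work, but you leave essentially all of it (construction of a compatible resolution dominating the map to $\Fl(d)^2$, the relevant rational-singularities and vanishing statements) as future work. The paper avoids resolutions altogether and instead runs an induction on $\inv(\tau)$ via the Demazure product: Lemma~\ref{lem:goodMoves} shows that passing from $(\sigma\star s,\tau)$ to $(\sigma,\tau\star s)$ changes neither the image in $S$ nor the property of being a cohomological equivalence, by interpolating both relative Richardsons through a common scheme via a $\PP^1$-bundle forgetful map (Lemmas~\ref{lem:GrassBundle2} and~\ref{lem:board-l4}), and the base case is that $R_{\mathrm{id},\sigma}\to S$ is a closed immersion onto $D_\sigma(\cpb;\cqb)$ (Lemma~\ref{lem:RtoS}). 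That induction also identifies the image $S'$ explicitly as $D_{\tau\star\sigma^{-1}}(\cpb;\cqb)$, which your approach does not attempt.
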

\noindent Moreover, the scheme-theoretic image $S'$ is exactly understood.  It is the subscheme $D_{\tau \star \sigma^{-1}} (\cpb;\cqb)$ of $S$ where $\cpb$ and $\cqb$ meet with permutation bounded above, in Bruhat order, by the Demazure product $\tau \star \sigma^{-1}$. See Theorem~\ref{thm:rabgood} for details, and Fact \ref{fact:demazure} for recollections on the Demazure product.

\begin{cor} \label{cor:rrchi}
With the hypotheses of Theorem \ref{thm:relativeRichardson},
$$H^i(\Rab,\cO_{\Rab}) \cong H^i(S', \cO_{S'}),$$
and in particular
$$\chi( \Rab, \cO_{\Rab}) = \chi(S', \cO_{S'}).$$
\end{cor}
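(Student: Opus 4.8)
The plan is to deduce Corollary~\ref{cor:rrchi} directly from part (2) of Theorem~\ref{thm:relativeRichardson} via the Leray spectral sequence for the morphism $\pi\colon \Rab \to S'$. First I would invoke the Leray spectral sequence $E_2^{p,q} = H^p(S', R^q\pi_\ast \cO_{\Rab}) \Rightarrow H^{p+q}(\Rab, \cO_{\Rab})$, which converges since $\pi$ is proper (it is the restriction of the proper map $\Fl(i_0,\ldots,i_s;\cH) \to S$ to closed subschemes). By Theorem~\ref{thm:relativeRichardson}(2), we have $R^q\pi_\ast \cO_{\Rab} = 0$ for $q > 0$ and $\pi_\ast \cO_{\Rab} = \cO_{S'}$. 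Hence the spectral sequence degenerates at $E_2$, with all rows except $q = 0$ vanishing, yielding the edge isomorphism $H^i(\Rab, \cO_{\Rab}) \cong H^i(S', \pi_\ast \cO_{\Rab}) = H^i(S', \cO_{S'})$ for every $i$.

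For the statement about Euler characteristics, I would then simply take alternating sums: since $\Rab$ is proper over $k$ (being a closed subscheme of the projective-over-$S$ scheme $\Fl(i_0,\ldots,i_s;\cH)$ with $S$ finite-type over $k$, and in fact $S'$ is proper precisely when $S$ is, but in any case the cohomology groups are finite-dimensional once we know they agree with those of $S'$ — more carefully, $\Rab \to S' \hookrightarrow S$ and the composite cohomology is finite-dimensional since $S$ is finite type), the Euler characteristic $\chi(\Rab, \cO_{\Rab}) = \sum_i (-1)^i \dim_k H^i(\Rab, \cO_{\Rab})$ equals $\sum_i (-1)^i \dim_k H^i(S', \cO_{S'}) = \chi(S', \cO_{S'})$. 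Strictly, one should note these sums are finite: $H^i$ vanishes for $i$ exceeding the dimension of $\Rab$ (resp. $S'$) by Grothendieck vanishing, so the Euler characteristics are well-defined.

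The argument is essentially formal once Theorem~\ref{thm:relativeRichardson}(2) is in hand, so there is no real obstacle here; the only point requiring a word of care is the finite-dimensionality of the cohomology groups, which I would handle by remarking that $S$ is finite-type over $k$ and $\Rab$ is proper over $S'$ — but since all the substantive content (the higher direct image vanishing and the identification $\pi_\ast\cO_{\Rab} = \cO_{S'}$) has already been established, this corollary is a short consequence of the Leray spectral sequence degeneration. I would keep the proof to two or three sentences, citing the degeneration of Leray and then passing to Euler characteristics.
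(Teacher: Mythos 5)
Your proof is correct and matches the paper's intended argument: the paper (at the start of \S\ref{ssec:coh}) defines a cohomological equivalence precisely as a proper morphism $\pi$ with $\pi_*\cO_X\cong\cO_Y$ and $R^i\pi_*\cO_X=0$ for $i>0$, and observes via Hartshorne III, Exercise 8.1 (i.e., the Leray spectral sequence degeneration you spell out) that such a $\pi$ induces $H^i(X,\cO_X)\cong H^i(Y,\cO_Y)$ and hence equality of Euler characteristics. Your additional remark about finite-dimensionality is harmless but not really needed: once the cohomology groups are identified, either both Euler characteristics are well-defined or neither is, and in the Brill--Noether application $S=\Pic^d(E)$ is proper so there is no issue.
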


\begin{remark}
When the base $S$ is $\Spec k$, a versal pair of flags is the same as a transverse pair of flags in a fixed vector space, and Theorem \ref{thm:relativeRichardson} directly generalizes several facts about the geometry and cohomology of Richardson varieties. Corollary \ref{cor:rrchi} generalizes the fact that Richardson varieties have algebraic Euler characteristic $1$.
\end{remark}

\begin{remark}
We will say that the map $\Rab \to S'$ is a \emph{cohomological equivalence}. Our analysis of the cohomological properties of this map is analogous in several ways to the results in Section $1$ of \cite{anderson-chen-tarasca-k-classes}.
Indeed, the variety $\Omega_{\mathbf{p,q}}$ defined in \cite{anderson-chen-tarasca-k-classes} is a special case of a relative Richardson variety, namely in the case where the flag variety is a Grassmannian, and $W_{\mathbf{p,q}}$ is its image. The hypotheses in \cite{anderson-chen-tarasca-k-classes} are weaker; versality is not required. Under their hypotheses, they prove a K-theoretic equivalence statement, weaker than cohomological equivalence but stronger than equality of Euler characteristic. This allows \cite{anderson-chen-tarasca-k-classes} to give an independent proof of the main result of \cite{chan-pflueger-euler}, which we prove there using the cohomological equivalence results of this paper.
\end{remark}

We can also describe the smooth locus of $\Rab$, as
\begin{equation}\label{eq:intro-sm}
(\Rab)^\mathrm{sm} = (X_{\ab})^\mathrm{sm} \cap (X_{\bb})^\mathrm{sm}.
\end{equation}
In fact, our second main theorem generalizes this to intersections of arbitrarily many relative Schubert varieties and proves a much stronger result about the singularities of such an intersection.  The most general statement is in Theorem~\ref{thm:kwy}, and applies to $\ell$-fold intersections of degeneracy loci defined with respect to versal flags.  Applied to relative Schubert varieties, we obtain the following special case of Theorem~\ref{thm:kwy}. The proof appears at the end of Section \ref{sec:KWY}.

\begin{thm} \label{thm:kwyIntro}
Let $P$ be an \'etale-local property of finite-type $k$-schemes that is preserved by products with affine space. Suppose that there is an integer $\ell$ and a function $f_{P,\ell}$ such that for any finite-type $k$-schemes $X_1, \cdots, X_\ell$ and point $x \in \prod X_i$,
$$P\left( x, \prod X_i \right) = f_{P,\ell}\left( P(\pi_1(x),X_1), \cdots P(\pi_\ell(x), X_\ell) \right).$$

Let $\cpbs$ be versal complete flags on a scheme $S$, and $\ab_1,\cdots,\ab_\ell$ be nests of sets, each with the same coranks. Then for every point $x \in X_{\ab_1}(\cpb_1) \cap \cdots \cap X_{\ab_\ell}(\cpb_\ell)$,
$$P(x, X_{\ab_1}(\cpb_1) \cap \cdots \cap X_{\ab_\ell}(\cpb_\ell)) = f_{P,\ell} \Big( P(x, X_{\ab_1}(\cpb_\ell)), \cdots, P(x, X_{\ab_\ell}(\cpb_\ell)) \Big).$$
\end{thm}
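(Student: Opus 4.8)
The statement is fundamentally a local (étale-local) assertion, so the plan is to reduce to a model situation where the flags are as generic as possible, and there the intersection of degeneracy loci literally decomposes as a product. Concretely, I would first invoke versality: the map $\Fr(\cH) \to \Fl(d)^\ell$ induced by $\cpb_1, \ldots, \cpb_\ell$ is smooth, so étale-locally on $S$ (after passing to a frame, which only introduces products with affine space, harmlessly by the hypothesis on $P$) we may assume $S$ itself maps smoothly to $\Fl(d)^\ell$, with the flags $\cpb_i$ pulled back from the tautological flags on the factors. Then the intersection $X_{\ab_1}(\cpb_1) \cap \cdots \cap X_{\ab_\ell}(\cpb_\ell)$ inside the relative partial flag variety over $S$ is the pullback, along the smooth map $S \to \Fl(d)^\ell$, of the corresponding universal object over $\Fl(d)^\ell$. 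Since $P$ is étale-local and smooth morphisms are étale-locally projections from affine space, it suffices to prove the statement in the universal case $S = \Fl(d)^\ell$.

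**The universal case.** Over $\Fl(d)^\ell$, I would use a second application of smoothness to further localize: the open cell structure on each flag variety lets us pick, near a point $x$ lying over a tuple of flags $(\pb_1, \ldots, \pb_\ell)$, coordinates in which each $\cpb_i$ is in a fixed relative position to a single reference flag. Here the key geometric input is that the relative partial flag variety $\Fl(i_0,\ldots,i_s;\cH)$ over this localized base splits, étale-locally near $x$, as a product of a base factor and the fiber partial flag variety — and, crucially, the degeneracy locus $X_{\ab_i}(\cpb_i)$ is ``transverse in the base direction'' to this splitting in a way that makes the total intersection a product of the individual $X_{\ab_i}$'s (each taken in its own copy of the fiber flag variety) with an affine space. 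This is precisely the kind of local product decomposition that the hypothesis ``$P$ is preserved by products with affine space'' and the multiplicativity function $f_{P,\ell}$ are designed to consume. I would extract this product decomposition from the machinery of Theorem~\ref{thm:kwy} (the general statement about $\ell$-fold intersections of degeneracy loci with respect to versal flags), which is cited as available; indeed the present theorem is stated as a special case of it, so the real content is organizing how the étale-local product structure there interacts with the three inputs on $P$.

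**Carrying it out.** In order: (i) reduce to $S$ smooth over $\Fl(d)^\ell$ via versality, absorbing the frame bundle into an affine-space factor; (ii) reduce to the universal base $\Fl(d)^\ell$ using that $P$ is étale-local and smooth maps are locally projections; (iii) invoke Theorem~\ref{thm:kwy} to obtain, étale-locally at $x$, an isomorphism of the $\ell$-fold intersection with $\big(\prod_i X_{\ab_i}(\cpb_i)\big) \times \mathbb{A}^N$ respecting the point $x$ and the individual factors; (iv) apply the hypothesis that $P$ is preserved by products with affine space to discard the $\mathbb{A}^N$; (v) apply the multiplicativity relation $P(x,\prod X_i) = f_{P,\ell}(P(\pi_i(x),X_i))$ to the genuine product, and match the factors $X_i$ with the local models of $X_{\ab_i}(\cpb_i)$ at $x$. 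The step I expect to be the main obstacle is (iii)–(v) done compatibly: making sure that the local product decomposition coming from Theorem~\ref{thm:kwy} identifies the $i$-th factor with $X_{\ab_i}(\cpb_i)$ in a way that is simultaneously compatible for all $i$ and with the point $x$, so that one clean application of $f_{P,\ell}$ suffices rather than an inductive unwinding. A secondary subtlety is bookkeeping the coranks hypothesis (``each with the same coranks''), which guarantees all the $X_{\ab_i}$ live in the same partial flag variety so that the intersection is taken inside a single ambient space — without it the product decomposition would not even be well-posed.
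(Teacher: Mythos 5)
Your proposal mostly re-derives the internals of Theorem~\ref{thm:kwy} rather than applying it as a black box, and in doing so it does not actually close the gap that the paper's proof addresses: the passage from the partial flag variety to the setting of Theorem~\ref{thm:kwy}.

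Concretely, the objects in Theorem~\ref{thm:kwyIntro} live in the partial flag bundle $\Fl(i_0,\ldots,i_s;\cH)$, whereas Theorem~\ref{thm:kwy} is stated for degeneracy loci $D_{\sigma_1,\ldots,\sigma_\ell}(\cvb;\cpb_1,\ldots,\cpb_\ell)$ where $\cvb$ and the $\cpb_i$ are \emph{complete} flags. You cannot simply ``invoke Theorem~\ref{thm:kwy}'' on the $\ell$-fold intersection of the $X_{\ab_i}(\cpb_i)$ directly, because the tautological flag in $\Fl(i_0,\ldots,i_s;\cH)$ is only partial and Theorem~\ref{thm:kwy} is not stated at that level of generality. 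The paper resolves this with a short argument that you have not reproduced: pull back along the forgetful morphism $F' = \Fl(\cH) \to F = \Fl(i_0,\ldots,i_s;\cH)$. Since $\Ess(\sigma(\ab_i)) \subseteq \{i_0,\ldots,i_s\}\times[d]$, the rank conditions defining $X_{\ab_i}(\cpb_i)$ only involve the strata $\cV^{i_j}$ that survive in the partial flag, so the preimage in $F'$ of $\bigcap_i X_{\ab_i}(\cpb_i)$ is precisely $D_{\sigma(\ab_1),\ldots,\sigma(\ab_\ell)}(\cvb;\pi'^\ast\cpb_1,\ldots,\pi'^\ast\cpb_\ell)$, to which Theorem~\ref{thm:kwy} applies. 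One then transfers the $P$-equality back down using that $F'\to F$ is a fiber bundle with fibers \'etale-locally isomorphic to affine space. You flag the ``same coranks'' hypothesis only as bookkeeping to make the ambient space well-defined; in fact its role is to make this preimage argument work.

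A secondary imprecision: in step (iii) you claim Theorem~\ref{thm:kwy} yields an \'etale-local isomorphism of the $\ell$-fold intersection with $\bigl(\prod_i X_{\ab_i}(\cpb_i)\bigr) \times \mathbb{A}^N$. That is not what Theorem~\ref{thm:kwy} states -- it gives the $P$-equality directly. The proof of Theorem~\ref{thm:kwy} does go through a local model $\prod_i D_{\sigma_i}(E^\bu;\cub)$ over $\Fl(d)^\ell$ via the equivalence-via-smooth-morphisms formalism (Lemma~\ref{lem:PviaSmooth}), but what one gets is not literally a product with your original $X_{\ab_i}(\cpb_i)$; it is a product of absolute Schubert-type loci, and the identification of their $P$-values with those of the $X_{\ab_i}$ at $x$ is a further step (this is closer to Theorem~\ref{thm:kwy2}). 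If your intent is to treat Theorem~\ref{thm:kwy} as a given, steps (i), (ii), and the product decomposition in (iii)--(v) are redundant; what is genuinely needed and missing is the partial-to-complete flag reduction described above.
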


This theorem generalizes the flag variety case of a theorem of Knutson, Woo, and Yong \cite{knutson-woo-yong-singularities}, both to the relative setting and to $\ell \geq 3$. Note that the results of \cite{knutson-woo-yong-singularities} apply to general Schubert varieties, whereas our results are specific to Schubert varieties in flag varieties.

\begin{remark}
We show in Example \ref{ex:transverseVersal} that a triple of fixed flags is never versal except in trivial cases. Therefore Theorem \ref{thm:kwyIntro} does not apply to intersections of three Schubert varieties in a fixed flag variety; the generalization to $\ell \geq 3$ depends in an essential way on the relative context.
\end{remark}

\subsection*{Application to Brill-Noether varieties}  Relative Richardson varieties arise naturally in the study of Brill-Noether varieties in \cite{chan-pflueger-euler}. Let $E$ be an elliptic curve; we will realize twice-pointed Brill-Noether varieties $\Grdab(E,p,q) \rightarrow S=\Pic^d(E)$ as relative Richardson varieties.  
\begin{corollary}
The schemes $\Grdab(E,p,q)$ are relative Richardson varieties.
\end{corollary}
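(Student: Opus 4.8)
The content of the corollary is that for an elliptic curve $E$ with distinct points $p,q$ one must produce a smooth irreducible base $S$, a vector bundle $\cH$ on $S$, a versal pair of complete flags $\cpb,\cqb$ in $\cH$, and nests $\ab,\bb$, such that $X_\ab(\cpb)\cap X_\bb(\cqb)$ recovers $\Grdab(E,p,q)\to\Pic^d(E)$; the plan is to write this data down directly. Take $S=\Pic^d(E)$, which is an abelian variety, hence smooth and irreducible. Fix a Poincar\'e line bundle $\cL$ on $E\times S$ and let $\pi\colon E\times S\to S$ be the projection. Since $E$ is elliptic, every line bundle $L$ of degree $d\ge 1$ has $h^1(L)=0$ and $h^0(L)=d$, so by cohomology and base change $\cH:=\pi_*\cL$ is locally free of rank $d$; I would work inside the relative Grassmannian $\Gr(r+1,\cH)$ — a relative partial flag variety in the sense of the introduction — whose fibre over $[L]$ is $\{V\subseteq H^0(L):\dim V=r+1\}$.

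Next I would take $\cpb,\cqb$ to be the vanishing flags at $p$ and at $q$. Writing $\Gamma_p=\{p\}\times S$, for $0\le i\le d$ consider the restriction map $\cH\to\pi_*(\cL/\cL(-i\Gamma_p))$, from a rank-$d$ bundle to a rank-$i$ bundle. For $i\le d-1$ it is surjective since $h^1(L(-ip))=h^0(\cO_E(ip)\otimes L^{-1})=0$, so its kernel $\cP^i$ is a subbundle of corank $i$ whose fibre over $[L]$ is $H^0(L(-ip))\subseteq H^0(L)$; for $i=d$ the map is an isomorphism over the generic point (where $h^0(L(-dp))=0$), hence has nonzero determinant and is injective as a map of sheaves, so $\cP^d=0$. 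Thus $\cpb=(\cH=\cP^0\supset\cP^1\supset\cdots\supset\cP^d=0)$ is a complete flag of subbundles, and $\cqb$ is defined the same way with $q$. Setting $\ab=(\{0,\dots,d-1\}\supset\{\alpha_0,\dots,\alpha_r\}\supset\emptyset)$ and likewise $\bb$ from $\beta$ — legitimate in the usual range $\alpha_r,\beta_r\le d-1$ — the degeneracy locus $X_\ab(\cpb)\cap X_\bb(\cqb)$ is cut out in $\Gr(r+1,\cH)$ by exactly the conditions $\dim(V\cap H^0(L(-ap)))\ge\#\{k:\alpha_k\ge a\}$ and $\dim(V\cap H^0(L(-bq)))\ge\#\{k:\beta_k\ge b\}$; this is $\Grdab(E,p,q)$, and comparing with the determinantal description of \S\ref{ssec:degen} shows the scheme structures agree. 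This part is routine.

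The main obstacle is to verify that $(\cpb,\cqb)$ is versal, i.e.\ that $\Fr(\cH)\to\Fl(d)^2$ is smooth. Because this morphism is fibrewise $\GL_d$-equivariant, smoothness over $[L]\in S$ reduces to: the $\GL_d$-orbit of the relative position of $(\cP^\bu_{[L]},\cQ^\bu_{[L]})$ in $\Fl(d)^2$ has codimension at most one, and when the codimension is one the tangent direction obtained by varying $[L]$ in $S$ is not tangent to the orbit. The relative position is determined by $\dim(\cP^i_{[L]}\cap\cQ^j_{[L]})=h^0(L(-ip-jq))$ for $i,j\le d-1$ (and $=0$ when $i=d$ or $j=d$), which equals the transverse value $\max(0,d-i-j)$ except when $i+j=d$, $1\le i\le d-1$, and $L\cong\cO_E(ip+(d-i)q)$, where it exceeds it by exactly $1$. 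When $p-q$ is non-torsion the offending line bundles $\cO_E(ip+(d-i)q)$, $i=1,\dots,d-1$, are $d-1$ distinct points of $\Pic^d(E)$, and over each of them exactly one intersection dimension jumps, by exactly one, so the rank matrix differs from that of the transverse position in a single anti-diagonal entry and the orbit has codimension one; and since the family $L(-ip-jq)$ is, up to translation, a universal degree-$0$ family, the locus where that rank number jumps is the pullback of the theta divisor of $\Pic^0(E)$, a reduced point, so the family crosses the orbit transversally as $[L]$ varies. Away from these points the flags are transverse and smoothness is automatic. Hence $(\cpb,\cqb)$ is versal and $\Grdab(E,p,q)=X_\ab(\cpb)\cap X_\bb(\cqb)$ is a relative Richardson variety over $\Pic^d(E)$. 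The one caveat I anticipate is that when $p-q$ is torsion several intersection dimensions can jump simultaneously at one point, so that case needs either a genericity hypothesis on $(p,q)$ or a separate argument.
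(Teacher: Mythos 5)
Your proof is correct and follows the same overall strategy as the paper: realize $\Grdab(E,p,q)$ as a degeneracy locus in a Grassmannian bundle defined by the two vanishing flags, and reduce the claim to versality of those flags. Your versality argument agrees with the paper's up to the final reducedness check. Both you and the paper observe that the flags are transverse except over the finitely many $L\cong\cO_E\bigl(ip+(d-i)q\bigr)$, where (thanks to the non-torsion hypothesis) they are almost-transverse, so that by Lemma~\ref{lem:versalityCriterion2} it suffices to show the non-transversality locus is a finite reduced subscheme. Where you part ways is in proving reducedness: you identify the degeneracy locus $\{L:\ h^0(L(-ip-(d-i)q))>0\}$ scheme-theoretically with a translate of the theta divisor of $\Pic^0(E)$, which has degree $g!/g!=1$ and is therefore a reduced point; the paper instead reformulates the rank-jump condition locally as the coincidence of two sections of a $\PP^1$-bundle naturally identified with $\mathrm{Sym}^2 E\to\Pic^2 E$ (the loci of degree-two divisors containing $p$, resp.\ $q$) and checks by hand that these meet transversely at $p+q$. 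Your route is conceptually cleaner and appeals to a classical fact, but you should add one sentence making explicit that the determinantal scheme structure on $D_s$ --- the vanishing of $\det(\cP^i\to\cH/\cQ^{d-i})$ --- agrees with the standard determinantal presentation of the theta divisor, via the identification of $\cP^i|_{[L]}$ with $H^0(N(D))$ and of $(\cH/\cQ^{d-i})|_{[L]}$ with $H^0(N(D)|_D)$ for $N=L(-ip-(d-i)q)$ and $D=(d-i)q$; without this the passage from ``set-theoretic pullback of $\Theta$'' to ``scheme-theoretic pullback of $\Theta$'' is unjustified. You also correctly identify that the non-torsion hypothesis on $p-q$ is the genericity condition needed to rule out simultaneous rank jumps, which is exactly the hypothesis the paper assumes.
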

\noindent (See Corollary 6.2.)  These varieties are the main building block in the proof in \cite{chan-pflueger-euler} of an Euler characteristic formula for Brill-Noether varieties, which uses limit linear series and degenerations of genus $g$ curves to chains of elliptic curves.   Corollary~\ref{cor:rrchi} is used to deduce the Euler characteristics of these building blocks.

In addition, by recognizing twice-pointed Brill-Noether varieties as relative Richardson varieties, we obtain another application: Equation~\ref{eq:intro-sm} gives a new proof of the main result of \cite{chan-osserman-pflueger-gieseker} in the case of elliptic curves, characterizing the singular locus of $\Grdab(E,p,q)$.  Previously, the description of the singular locus of twice-pointed Brill-Noether varieties for elliptic curves was obtained in \cite{chan-osserman-pflueger-gieseker} using an explicit analysis of vertical and horizontal tangent spaces at points in $\Grdab(E,p,q)$ relative to the map to the Picard variety $\Pic^d(E).$

We conjecture that Brill-Noether varieties of a twice-marked curve $C$ of higher genus are also isomorphic as $\Pic^d(C)$-schemes to relative Richardson varieties; we discuss this conjecture and its consequences in Section \ref{sec:brillnoether}.

\bigskip

\noindent {\bf Acknowledgments.}   We are grateful to Dave Anderson, Allen Knutson, Alex Woo, and Alex Yong for their correspondence regarding an earlier version of this manuscript, in particular for explaining the connection to the Demazure product. We also thank Jonathan Wise for helpful conversations leading to Remark \ref{rem:stacky}. MC was supported by NSF DMS-1701924,  NSF CAREER DMS-1844768, and a 2018 Sloan Research Fellowship.

\section*{Notation}

We collect for convenience the notation used throughout this paper. Where applicable, page numbers where the notation is first discussed are provided.

\bigskip

\begin{longtable}{rlr}
$S,T,F$ & Finite-type $k$-schemes\\
$\cH$ & A vector bundle\\
$d$ & The rank of $\cH$\\
$[d]$ & The set $\{0,1,\cdots, d-1\}$\\
$\cpb, \cqb, \cvb$ & Flags in $\cH$\\
$\cV^a$ & The stratum of codimension $a$ in $\cvb$\\
$M(H; P_1^\bu, \cdots, P_\ell^\bu)$ & Space of relative first-order deformations of $\ell$ flags & p. \pageref{loc:mdefn}\\
$\delta_x(\cH; \cP^\bu_1, \cdots, \cP^\bu_\ell)$ & Induced map $T_x S \rightarrow M$ & p. \pageref{loc:deltadef}\\
$A^\bu$, $B^\bu$ & Nests of sets & p. \pageref{loc:nest}\\
$\sigma,\tau$ & Permutations of $[d]$\\
$\sigma(A^\bu)$ & Decreasing completion of a nest of sets & p. \pageref{loc:dc}\\
%$\varphi(\sigma, \tau)$ & Forced permutation of $\sigma$ and $\tau$ & p. \pageref{loc:phi}\\
$r^{\sigma}(a,b)$ & Rank function of a permutation & p. \pageref{loc:rs}\\
$\tau \star \sigma$ & Demazure product of permutations & p. \pageref{fact:demazure}\\
$\Fl(\cH)$ & Complete (relative) flag variety of a vector bundle $\cH$\\
$\Fl(i_0, i_1, \cdots, i_s; \cH)$ & Flag variety of $\cH$ with strata of codimensions $\{i_j\}$ \\
$\Fl(d)$ & Variety of complete flags in $k^d$ \\
$D_{\sigma}(\cvb; \cpb)$ & Degeneracy locus where $\cvb$ meets $\cpb$ as prescribed by $\sigma$ & p. \pageref{def:dst}\\
$D_{\sigs}(\cvb; \cpbs)$ & Intersection of $\ell$ degeneracy loci & p. \pageref{loc:dss}\\
$X_{\ab}(\cpb)$& (Relative) Schubert variety defined by a nest of sets & p. \pageref{loc:xa}\\
$X_\sigma(\cpb)$ & (Relative) Schubert variety defined by a permutation & p. \pageref{loc:xs}\\
$R_{\ab,\bb}(\cpb,\cqb)$ & Relative Richardson variety defined by two nests of sets & p. \pageref{loc:rab}\\
$R_{\sigma,\tau}(\cpb,\cqb)$ & Relative Richardson variety defined by two permutations& p. \pageref{loc:rst}\\
\end{longtable}

\bigskip

In all of these notations, we often omit the arguments (e.g. write simply $\Rst$) where they are clear from context. A tilde over a symbol (e.g., $\widetilde{D}_\sigma$) indicates the open locus where the defining inequalities of the object in question hold with equality.
\section{Preliminaries}\label{sec:prelim}

This section summarizes background material needed for this paper. General references for this material include the expository article \cite{brion-lectures} and book \cite{fulton-young} for the Schubert varieties in flag varieties and the Bruhat order, \cite{fulton-pragacz} for a nice explanation, with many pictures, of rank functions and their relationship to permutations, and \cite{fulton-flags} for degeneracy loci of flags of vector bundles, as well as the essential set of a permutation. We also refer the reader to \cite[\S 5.6]{demazure-desingularisation} for the Demazure product, as it relates to Schubert varieties. In several cases we use different notation conventions than these sources, more natural to our application; we explain these choices in this section.

\subsection{Permutations and nested sequences}\label{ssec:perms}  We begin with combinatorial conventions. We write $[d] = \{0,\ldots,d-1\}$ and write $S_d$ for the permutation group of $[d]$. Given a permutation $\sigma\in S_d$, we will write $\sigma = (\sigma_0,\ldots,\sigma_{d-1})$ in one-line notation, i.e.~$\sigma_i = \sigma(i)$.  The {\em inversion number} of 
 a permutation $\sigma\in S_d$ is
 $$\on{inv}(\sigma)=\# \{(i,j)\in [d]^2~|~ i<j\text{ and } \sigma_i > \sigma_j\}.$$
We will denote by $\omega$ the descending permutation $\omega(i) = d-1-i$. Observe that for all $\sigma \in S_d$, $\inv(\omega \sigma) = \binom{d}{2} - \inv(\sigma)$, the number of ``non-inversions'' of $\sigma$.

A \emph{nest of sets} \label{loc:nest} is a sequence
$$A^\bu = ([d] = A^{i_0}  \supset A^{i_1} \supset \cdots \supset A^{i_s} = \emptyset),$$
where $|A_{i_j}| = d - i_j$. The numbers $i_s$ are called the \emph{coranks} of $\ab$. Note that we require $i_0 = 0$ and $i_s = d$ for convenience later.

Define the {\em decreasing completion} \label{loc:dc} $\sigma(A^\bu) \in S_d$ of $A^\bu$ to be the permutation obtained by writing the elements of $A^{i_0} \setminus A^{i_1}$ in decreasing order, then the elements of $A^{i_1}\setminus A^{i_2}$ in decreasing order, and so on.  
For example, the decreasing completion of
$$\{0,1,2,3,4\}  \supset \{0,1,3\} \supset \emptyset$$
is $(4,2,3,1,0)$. 
For a nest of sets $\ab$, we define $\on{inv}(\ab) = \on{inv}(\sigma(\ab)).$
Decreasing completion provides a bijection between $S_d$ and nests of sets of coranks $(0,1,\cdots,d-1)$. Such a nest $\ab$ is called \emph{complete}. We will often identify complete nests of sets with permutations.

\subsection{Flags}

Let $H$ be a $d$-dimensional vector space over $k$.  Write 
$$P^\bu = (P^0 \supset P^1 \supset \cdots\supset P^d = 0)$$
for a complete flag of subspaces of $H$, where $P^i$ has codimension $i$.  The relative position of two flags $\pb$ and $\qb$  uniquely defines a \emph{rank function}
$$r(a,b) = \dim P^a \cap Q^b.$$

The rank function of two complete flags can be encoded by a permutation. For any $\sigma \in S_d$, define the rank function of $\sigma$ by
$$\label{loc:rs}
r^\sigma(a,b) = \# \{a' \in [d]: a' \geq a \textrm{ and }  \sigma(a') \geq b \}.$$
Note that this notation does not exclude the cases $a \geq d$ or $b \geq d$, where we define $r^\sigma(a,b) = 0$.

\begin{fact} \label{fact:bruhat} For any two complete flags $\pb, \qb$, there exists a unique $\sigma \in S_d$ such that $\dim P^a \cap Q^b = r^\sigma(a,b)$ for all $a,b \in [d]$, called the \emph{permutation associated to $\pb, \qb$}. The following are equivalent.
\begin{enumerate}
\item The permutation associated to $\pb,\qb$ is $\sigma$.
\item There exists a basis $v_0, v_1, \cdots, v_{d-1}$ of $H$ such that $\{v_a, \cdots, v_{d-1}\}$ is a basis for $P^a$ for all $a$, and $\{v_{\sigma(b)}, \cdots, v_{\sigma(d-1)}\}$ is a basis for $Q^b$ for all $b$.
\end{enumerate} 
\end{fact}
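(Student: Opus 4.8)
This statement is the flag-pair form of the Bruhat decomposition of $\GL_d$, and the plan is to prove it in three moves: uniqueness of the permutation, the implication $(2)\Rightarrow(1)$, and the existence of a basis as in $(2)$ for an arbitrary pair of complete flags (which also supplies the existence half of the first sentence). For uniqueness, note that for $a\in[d]$ the difference $r^\sigma(a,b)-r^\sigma(a+1,b)$ (with the convention $r^\sigma(d,b)=0$) is obtained by isolating the term $a'=a$, hence equals $1$ if $\sigma(a)\ge b$ and $0$ otherwise; therefore $\sigma(a)=\#\{b\in[d]:r^\sigma(a,b)>r^\sigma(a+1,b)\}-1$ is recovered from the function $r^\sigma$, so two distinct permutations cannot share a rank function. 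For $(2)\Rightarrow(1)$: given a basis $v_0,\dots,v_{d-1}$ as in $(2)$, both $P^a$ and $Q^b$ are spanned by subsets of $\{v_0,\dots,v_{d-1}\}$, so $P^a\cap Q^b$ is spanned by the $v_c$ lying in both; counting these and unwinding the indexing convention of \S\ref{ssec:perms} identifies $\dim P^a\cap Q^b$ with $r^\sigma(a,b)$, which is exactly statement $(1)$.

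The crux is producing, for any complete flags $\pb,\qb$, a basis as in $(2)$, and I would do this by induction on $d$, the case $d\le 1$ being trivial. Let $P^1$ be the hyperplane stratum of $\pb$, and let $b_0$ be the largest index with $Q^{b_0}\not\subseteq P^1$; the set $\{b:Q^b\not\subseteq P^1\}$ is a nonempty initial segment of $\{0,\dots,d\}$ not containing $d$, so $b_0\in[d]$. Choose $v_0\in Q^{b_0}\setminus P^1$, so $H=\langle v_0\rangle\oplus P^1$. Now work inside $P^1$, which carries the complete flag $P^1\supset P^2\supset\cdots\supset P^d=0$ and the nested chain $b\mapsto Q^b\cap P^1$; a short dimension count shows $b\mapsto\dim(Q^b\cap P^1)$ strictly decreases by one at each step except for a single stall at $b=b_0$, so deleting that one repeated term turns $(Q^b\cap P^1)_b$ into a complete flag in $P^1$. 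By induction there is a basis $v_1,\dots,v_{d-1}$ of $P^1$ with $P^a=\langle v_a,\dots,v_{d-1}\rangle$ for $a\ge 1$ and with every $Q^b\cap P^1$ spanned by a subset of $\{v_1,\dots,v_{d-1}\}$. Then $v_0,\dots,v_{d-1}$ is a basis of $H$ adapted to $\pb$; and for each $b$ one has $Q^b=Q^b\cap P^1$ when $b>b_0$ and $Q^b=\langle v_0\rangle\oplus(Q^b\cap P^1)$ when $b\le b_0$ (using $v_0\in Q^{b_0}\subseteq Q^b$), so in either case $Q^b$ is spanned by a subset $S_b$ of the basis. The $S_b$ are nested with $|S_b|=d-b$, so letting $\sigma(b)$ be the unique element of $S_b\setminus S_{b+1}$ defines a permutation with $Q^b=\langle v_{\sigma(b)},\dots,v_{\sigma(d-1)}\rangle$, which is condition $(2)$; combined with $(2)\Rightarrow(1)$ this also produces the associated permutation asserted in the first sentence.

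Finally, $(1)\Rightarrow(2)$ is formal: by the previous paragraph some permutation $\sigma'$ admits a basis as in $(2)$ for $\pb,\qb$, so by $(2)\Rightarrow(1)$ the permutation associated to $\pb,\qb$ is $\sigma'$, and uniqueness forces $\sigma'=\sigma$, so that basis realizes $\sigma$. The one genuinely delicate step is the inductive construction: I must pin down precisely that the chain $(Q^b\cap P^1)_b$ repeats exactly once and locate the repetition, and check that the basis assembled from $v_0$ together with the inductively produced $v_1,\dots,v_{d-1}$ is \emph{simultaneously} adapted to both flags with no correction of the lifts needed — the key point being that $v_1,\dots,v_{d-1}$ already lie in $P^1$, so no lifting across the quotient $H/P^1$ ever occurs. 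Everything else reduces to routine dimension counts.
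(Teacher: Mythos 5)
The paper states Fact~\ref{fact:bruhat} without proof, treating it as the standard characterization of relative position coming from the Bruhat decomposition of $\GL_d$, so there is no argument of the paper to compare yours against. Your outline is the standard one: uniqueness via the discrete derivatives $r^\sigma(a,b)-r^\sigma(a+1,b)$, $(2)\Rightarrow(1)$ by reading dimensions off a common adapted basis, and existence of such a basis by induction on $d$, choosing $v_0\in Q^{b_0}\setminus P^1$ and relativizing $\qb$ inside $P^1$. The inductive core is sound: the dimension count shows $b\mapsto\dim(Q^b\cap P^1)$ stalls exactly once, between $b_0$ and $b_0+1$, so deleting the repeated term yields a complete flag in $P^1$, and the lifting issue you flag at the end is genuinely a non-issue because $v_1,\dots,v_{d-1}$ already lie in $P^1$.

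However, the step you defer to ``unwinding the indexing convention'' in $(2)\Rightarrow(1)$ is exactly where care is needed, and when you carry it out it does not close as stated. With $P^a=\langle v_a,\dots,v_{d-1}\rangle$ and $Q^b=\langle v_{\sigma(b)},\dots,v_{\sigma(d-1)}\rangle$, the basis vectors in $P^a\cap Q^b$ are those $v_{\sigma(b')}$ with $b'\ge b$ and $\sigma(b')\ge a$, so
\[
\dim P^a\cap Q^b \;=\; \#\{b'\ge b:\ \sigma(b')\ge a\}\;=\;r^\sigma(b,a)\;=\;r^{\sigma^{-1}}(a,b),
\]
not $r^\sigma(a,b)$; these differ whenever $\sigma\neq\sigma^{-1}$. (For a concrete check, take $d=3$, $\sigma=(1,2,0)$ in one-line notation, $(a,b)=(1,2)$: the adapted basis gives $P^1\cap Q^2=0$, while $r^\sigma(1,2)=1$.) Thus the $\sigma$ appearing in the adapted basis of~(2) is, under the paper's definition of $r^\sigma$, the \emph{inverse} of the permutation associated to $(\pb,\qb)$ in the first sentence. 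Your inductive construction correctly produces a basis of type~(2), but the permutation it yields is then the inverse of the associated one, a mismatch your write-up silently absorbs. You should either prove the version with $\sigma^{-1}$ in~(2) or record the identity $r^\sigma(b,a)=r^{\sigma^{-1}}(a,b)$ and make the translation explicit, rather than leaving it inside an unspecified ``unwinding.''
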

\noindent For example, $\pb=\qb$ if and only if $\sigma = \mathrm{id}$.  At the other extreme, flags $P^\bu,Q^\bu$ are {\em transverse} if their associated permutation is $\omega$. Explicitly, $\pb,\qb$ are transverse if and only if 
$$\dim P^a \cap Q^b = \max(d - a - b, 0)$$
for all $i, j$; that is, every pair of subspaces meets transversely.
Call $\pb,\qb$ {\em almost-transverse} if their associated permutation differs from $\omega$ by an adjacent transposition, or equivalently $\inv(\omega \sigma) = 1$.

A rank function $r^\sigma$ is uniquely determined by its values on a fairly small subset of its domain.

\begin{defn}
The \emph{essential set} of a permutation $\sigma \in S_d$ is
$$\Ess(\sigma) = \left\{ (a,b):\ 1 \leq a,b < d,\ \sigma(a-1) < b \leq \sigma(a) \mbox{ and } \sigma^{-1}(b-1) < a \leq \sigma^{-1}(b)\right\}.$$
\end{defn}

%To briefly motivate this definition, observe that $\sigma(a-1) \geq b$ would imply $r^\sigma(a-1,b) = r^\sigma(a,b) +1$, so the bound $\dim P^{a-1} \cap Q^b \geq r^\sigma(a-1,b)$ implies the bound $\dim P^a \cap Q^b \geq r^\sigma(a,b)$. Similarly, $\sigma(a) < b$ would imply $r^\sigma(a+1,b) = r^\sigma(a,b)$, so the bound $\dim P^a \cap Q^b \geq r^\sigma(a,b)$ is implied by the bound $\dim P^{a+1} \cap Q^b \geq r^\sigma(a+1,b)$ in this case. This is why we require $\sigma(a-1) < b \leq \sigma(a)$ in the essential set; the other pair of inequalities is obtained analogously.

The essential set was introduced in \cite{fulton-flags}, although we define it slightly differently here; see Remark \ref{rem:essDiff}.
The importance of the essential set is reviewed in Fact \ref{fact:essSet}. 

The set $S_d$ has a partial order, the {\em Bruhat order}: $\sigma \le \tau$ in Bruhat order if and only if $r^\sigma(a,b) \geq r^\tau(a,b)$ for all $a$ and $b$.  See, e.g., \cite[\S10.5]{fulton-young}. By semicontinuity, the associated permutation of two varying flags is lower semi-continuous in the Bruhat order.

%\begin{definition}\label{def:fix}
Let $\Fix P^\bu$ denote the vector subspace of $\End H$ consisting of $\phi: H \rightarrow H$ such that $\phi(P^i) \subseteq P^i$ for all $i$.   
The following characterization of $\inv(\sigma)$, for a permutation associated to flags $\pb,\qb$, will be convenient later.
%\end{definition}

\begin{fact} \label{fact:invFix}
If $\sigma$ is the permutation associated to flags $P^\bu, Q^\bu$, then 
$$\inv(\omega \sigma) = \dim \End H - \dim \left( \Fix P^\bu + \Fix Q^\bu \right).$$
In particular, $P^\bu$ and $Q^\bu$ are transverse if and only if $\Fix P^\bu + \Fix Q^\bu = \End H$.
\end{fact}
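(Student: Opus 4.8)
The plan is to use the adapted basis supplied by Fact~\ref{fact:bruhat} to turn both $\Fix \pb$ and $\Fix \qb$ into explicit coordinate subspaces of $\End H$, after which the asserted identity reduces to a combinatorial count. First I would choose a basis $v_0,\ldots,v_{d-1}$ of $H$ as in Fact~\ref{fact:bruhat}(2), so that $P^a = \langle v_a,\ldots,v_{d-1}\rangle$ for all $a$ and $Q^b = \langle v_{\sigma(b)}, v_{\sigma(b+1)},\ldots,v_{\sigma(d-1)}\rangle$ for all $b$. Write $E_{kl}\in\End H$ for the endomorphism sending $v_l\mapsto v_k$ and $v_m\mapsto 0$ for $m\neq l$; these form a basis of $\End H$. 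A map $\phi=\sum c_{kl}E_{kl}$ preserves every $P^a$ if and only if $\phi(v_l)\in P^l$ for each $l$, i.e.\ $c_{kl}=0$ whenever $k<l$; hence $\Fix\pb = \operatorname{span}\{E_{kl}:k\ge l\}$. Likewise $\phi$ preserves every $Q^b$ if and only if $\phi(v_l)\in Q^{\sigma^{-1}(l)}$ for each $l$, i.e.\ $c_{kl}=0$ whenever $\sigma^{-1}(k)<\sigma^{-1}(l)$; hence $\Fix\qb = \operatorname{span}\{E_{kl}:\sigma^{-1}(k)\ge \sigma^{-1}(l)\}$.

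Next, since $\Fix\pb$ and $\Fix\qb$ are each spanned by a subset of the basis $\{E_{kl}\}$, their sum is simply the span of the union of those subsets: $\Fix\pb+\Fix\qb = \operatorname{span}\{E_{kl}: k\ge l \text{ or } \sigma^{-1}(k)\ge \sigma^{-1}(l)\}$. Therefore $\dim\End H - \dim(\Fix\pb+\Fix\qb)$ equals the number of pairs $(k,l)$ with $k<l$ and $\sigma^{-1}(k)<\sigma^{-1}(l)$. Substituting $k=\sigma(i)$, $l=\sigma(j)$ turns this into the number of pairs $(i,j)$ with $i<j$ and $\sigma(i)<\sigma(j)$, that is, the number of non-inversions of $\sigma$, which is $\binom d2 - \inv(\sigma) = \inv(\omega\sigma)$ by the identity recorded in \S\ref{ssec:perms}. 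For the final assertion: $\pb$ and $\qb$ are transverse precisely when $\sigma=\omega$, equivalently $\inv(\omega\sigma)=0$, equivalently $\Fix\pb+\Fix\qb=\End H$.

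The argument is essentially bookkeeping, and I do not anticipate a real obstacle. The one point deserving a moment's care is the equivalence ``$\phi$ preserves the flag $\pb$'' $\iff$ ``$c_{kl}=0$ for $k<l$'': one direction is immediate, and the other uses that it suffices to impose $\phi(v_l)\in P^l$ for each single basis vector $v_l$ (rather than $\phi(P^a)\subseteq P^a$ for all $a$), since $P^l\subseteq P^a$ whenever $l\ge a$; the same remark applies to $\qb$ with $P^l$ replaced by $Q^{\sigma^{-1}(l)}$. One should also be slightly attentive to the index conventions ($[d]=\{0,\ldots,d-1\}$, $\omega(i)=d-1-i$) when carrying out the substitution $k=\sigma(i),\,l=\sigma(j)$.
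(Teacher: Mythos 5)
Your proof is correct, and it is essentially the approach the paper itself gestures at: the paper's hint is to analyze $\Fix P^\bu \cap \Fix Q^\bu$ via a basis of the type in Fact~\ref{fact:bruhat}, and you do exactly that, presenting both $\Fix P^\bu$ and $\Fix Q^\bu$ as coordinate subspaces in the $E_{kl}$ basis and reading off the (co)dimension of their sum directly rather than passing through the intersection and inclusion--exclusion. The bookkeeping (the equivalence of ``$\phi$ preserves $\pb$'' with ``$\phi(v_l)\in P^l$ for each $l$'', and the substitution $k=\sigma(i)$, $l=\sigma(j)$ converting the count to non-inversions) is all carried out correctly.
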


Fact \ref{fact:invFix} can be proved using a straightforward argument characterizing $\Fix P^\bu \cap \Fix Q^\bu$ in terms of a basis of the type described in Fact \ref{fact:bruhat}.

%%%%%
%%%%%
%%%%%

\subsection{The Demazure product}

To state our main results, we require an associative operation $\star$ on $S_d$ called the Demazure product.

\begin{fact} \label{fact:demazure}
For any two permutation $\sigma, \tau \in S_d$, there exists a unique permutation $\tau \star \sigma$ such that
\begin{equation}\label{eq:r-star}
r^{\tau \star \sigma}(a,b) = \max_{0 \leq k \leq d} \left( r^{\sigma}(a,k) + r^{\tau}(k,b) - (d-k) \right).
\end{equation}
The operation $\star$ defined in this way is associative, and satisfies $(\sigma \star \tau)^{-1} = \tau^{-1} \star \sigma^{-1}$. When one of the permutations is a simple transposition $s$ (a transposition of two adjacent elements of $[d]$), then 
\begin{equation}\label{eq:star-recursion}
\tau \star s = \begin{cases} \tau & \mathrm{ if } \inv(\tau s) < \inv(\tau) \\ \tau s & \mathrm{if \inv(\tau s) > \inv(\tau)} \end{cases}.   
\end{equation}
\end{fact}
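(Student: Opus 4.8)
The plan is to treat the rank‑function formula~\eqref{eq:r-star} as the fundamental object: first show that it does define a genuine permutation, then deduce associativity and the identity $(\sigma\star\tau)^{-1}=\tau^{-1}\star\sigma^{-1}$ as formal manipulations of~\eqref{eq:r-star}, and finally verify the simple‑transposition recursion~\eqref{eq:star-recursion} by an explicit computation. Uniqueness of $\tau\star\sigma$ is immediate from Fact~\ref{fact:bruhat}, since a permutation is determined by its rank function, so the content is existence: I would set $R(a,b):=\max_{0\le k\le d}\big(r^\sigma(a,k)+r^\tau(k,b)-(d-k)\big)$ and show $R=r^\pi$ for some $\pi\in S_d$, which we then name $\tau\star\sigma$. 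For this I use the standard characterization: a function $r\colon\{0,\dots,d\}^2\to\ZZ_{\ge0}$ equals $r^\pi$ for a unique $\pi\in S_d$ precisely when $r(0,0)=d$, $r(d,b)=r(a,d)=0$, $r$ decreases by $0$ or $1$ under $a\mapsto a+1$ and under $b\mapsto b+1$, and the mixed differences $\rho(a,b):=r(a,b)-r(a+1,b)-r(a,b+1)+r(a+1,b+1)$ are all $\ge0$; in that case $\rho$ is forced to be a permutation matrix (each row and column sums to $0$ or $1$, and the total is $d$), and $\pi$ is its permutation. For $R$, the corner values and the ``decreases by $0$ or $1$'' conditions are short estimates from $r^\sigma(a,k)\le d-k$ and $r^\sigma(a,k)\le r^\sigma(a+1,k)+1$ (and their $\tau$-analogues) together with the trivial lower bounds obtained by reusing a maximizing $k$.

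The substantive point, and the one step that needs an actual idea rather than bookkeeping, is that the mixed differences of $R$ are $\ge0$, i.e.\ that $R$ is supermodular. I would prove this via the tropical principle that a max‑plus product of supermodular kernels is supermodular: if $f(x,z)+f(x',z')\ge f(x,z')+f(x',z)$ whenever $x\le x'$ and $z\le z'$, and similarly for $g$, then $h(x,y):=\max_z\big(f(x,z)+g(z,y)\big)$ has the same property. This is the usual exchange argument: given $x\le x'$ and $y\le y'$, pick $z$ attaining $h(x',y)$ and $z'$ attaining $h(x,y')$; if $z\le z'$, apply supermodularity of $f$ to the pairs $\{x,x'\}$ and $\{z,z'\}$, and if $z'\le z$, apply supermodularity of $g$ to $\{z',z\}$ and $\{y,y'\}$; either way $h(x',y)+h(x,y')\le h(x,y)+h(x',y')$. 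Applying this with $f=r^\sigma$ and $g(k,b)=r^\tau(k,b)+k-d$ (adding a function of $k$ alone does not affect supermodularity), both supermodular because $r^\sigma$ and $r^\tau$ are rank functions of permutations, shows that $R=r^{\tau\star\sigma}$. Geometrically, $R$ is the rank function of the relative position of a generic triple $\pb,\vb,\qb$ with $(\pb,\vb)$ in position $\sigma$ and $(\vb,\qb)$ in position $\tau$: one always has $\dim(P^a\cap Q^b)\ge R(a,b)$ from the dimension estimate inside $V^k$, with equality generically; but this picture is not needed for the proof.

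Once existence is in hand, associativity and the inverse identity are formal. Unwinding~\eqref{eq:r-star} twice gives, for all $a,b$,
$$r^{(\rho\star\tau)\star\sigma}(a,b)=\max_{0\le j,k\le d}\big(r^\sigma(a,k)+r^\tau(k,j)+r^\rho(j,b)-(d-k)-(d-j)\big)=r^{\rho\star(\tau\star\sigma)}(a,b),$$
so both sides are the same permutation. For the inverse, one has $r^{\sigma^{-1}}(a,b)=r^\sigma(b,a)$ (substitute $a'=\sigma(c)$ in the count defining $r^\sigma$), and then
$$r^{(\sigma\star\tau)^{-1}}(a,b)=r^{\sigma\star\tau}(b,a)=\max_k\big(r^{\tau^{-1}}(k,b)+r^{\sigma^{-1}}(a,k)-(d-k)\big)=r^{\tau^{-1}\star\sigma^{-1}}(a,b).$$

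Finally, for the simple‑transposition recursion write $s=s_i$ for the transposition of $i$ and $i+1$. A direct count shows $r^{s_i}=r^{\id}$ except that it is smaller by $1$ at the single point $(i+1,i+1)$; and, when $\inv(\tau s_i)>\inv(\tau)$, that $r^{\tau s_i}=r^\tau$ except that it is smaller by $1$ exactly at the points with $a=i+1$ and $\tau(i)<b\le\tau(i+1)$. Plugging $r^{s_i}$ into~\eqref{eq:r-star} and using that $\id$ is a two‑sided identity for $\star$ (an easy consequence of~\eqref{eq:r-star}), one finds $r^{\tau\star s_i}(a,b)=r^\tau(a,b)$ for $a\ne i+1$, while for $a=i+1$ only the term $k=i+1$ in the maximum is disturbed, and lowered by $1$; hence $r^{\tau\star s_i}(i+1,b)$ equals $r^\tau(i+1,b)$ if the maximum defining $r^\tau(i+1,b)$ is also attained at some $k\ne i+1$, and equals $r^\tau(i+1,b)-1$ otherwise. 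Checking which $k$ attain that maximum (for $k\ge i+1$ the relevant term is $r^\tau(k,b)$, and for $k\le i+1$ it is $r^\tau(k,b)-(i+1-k)$) shows the second alternative occurs exactly when $\tau(i)<b\le\tau(i+1)$, which cannot happen if $\tau(i)>\tau(i+1)$. Therefore if $\inv(\tau s_i)<\inv(\tau)$ then $r^{\tau\star s_i}=r^\tau$, so $\tau\star s_i=\tau$; and if $\inv(\tau s_i)>\inv(\tau)$ then $r^{\tau\star s_i}=r^{\tau s_i}$ by the two descriptions above, so $\tau\star s_i=\tau s_i$. The main obstacle throughout is the supermodularity of $R$, which is what makes~\eqref{eq:r-star} legitimate in the first place; everything else reduces to formal manipulation of~\eqref{eq:r-star} or to finite checks in one‑line notation.
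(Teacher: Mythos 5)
Your proof is correct, and it takes a genuinely different route to the heart of the matter. The paper's sketch establishes existence of $\tau\star\sigma$ by induction on $\inv(\sigma)$, factoring $\sigma$ into simple transpositions and using the recursion~\eqref{eq:star-recursion} as the inductive engine; this means the recursion has to be verified first, and existence then emerges step by step. You instead prove existence directly and non-inductively, by exhibiting the right characterization (a function is a permutation rank function iff it has the correct corner values, decreases by $0$ or $1$ in each coordinate, and has nonnegative mixed differences) and verifying these axioms for $R$ all at once. The only nontrivial axiom is supermodularity of $R$, and your key lemma --- that a max-plus product of two supermodular kernels is supermodular, proved by the standard exchange argument on the maximizing indices, applied with $g(k,b)=r^\tau(k,b)+k-d$ --- handles it cleanly. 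This buys a self-contained, uniform argument in which associativity, the inverse law, and the recursion all become consequences of the already-established identity $R=r^{\tau\star\sigma}$, rather than ingredients in proving it. Your verification that $k=i+1$ is the unique maximizer in the case $a=i+1$ precisely when $\tau(i)<b\le\tau(i+1)$ is the casework the paper alludes to for~\eqref{eq:star-recursion}, and it is correct (including the observation that this window is empty when $\tau(i)>\tau(i+1)$). The associativity and inverse computations are the same formal manipulations the paper indicates.
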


Equation \ref{eq:r-star} is motivated by the following observation: if $P^\bu, Q^\bu, R^\bu$ are three flags, $\sigma$ is the permutation associated to $P^\bu, Q^\bu$, and $\tau$ is the permutation associated to $Q^\bu, R^\bu$, then for all $a,b,k \in [d]$,
\begin{equation}
\dim P^a \cap R^b \geq  \dim P^a \cap Q^k + \dim Q^k \cap R^b - \dim Q^k = r^\sigma(a,k) + r^\tau(k,b) - (d-k).
\end{equation}
Therefore $\tau \star \sigma$ gives an upper bound on the permutation associated to $P^\bu, R^\bu$. In fact, one can deduce from Theorem \ref{thm:relativeRichardson} that $\tau \star \sigma$ is the minimal such permutation. 

The Demazure product was introduced and studied in \cite{demazure-desingularisation} and \cite{bernstein-gelfand-gelfand-schubert} for arbitrary Weyl groups. We briefly sketch a proof of Fact \ref{fact:demazure} for the benefit of the reader unfamiliar with these topics, as follows. One may use the right side of Equation \ref{eq:r-star} to define an operation on functions $[d] \times [d] \rightarrow [d]$. One can verify that this operation is associative by writing a composition of two products as a maximum taken over two variables, and Equation \ref{eq:star-recursion} may be verified, on the level of rank functions, by some casework. Finally, the existence of a permutation with the desired rank function may be proved by induction on $\inv(\sigma)$ by factoring $\sigma$ into simple transpositions. 

%%%%%
%%%%%
%%%%%

%%%
\subsection{Schubert varieties in flag varieties} \label{ssec:schubert}

Fix a vector space $H$ of dimension $d$, and let $F^\bu \in \Fl(H)$ be a fixed complete flag. Given $\sigma\in S_d$, define the \emph{Schubert variety} $X_\sigma$ by
$$\label{loc:xs}
X_\sigma = X_\sigma(F^\bu) = \{\vb \in \Fl(H)\colon \dim V^a\cap F^{b} \ge r^\sigma(a,b) \mbox{ for all } a,b \in [d]\}.$$

We write $\widetilde{X}_\sigma$ for the open locus where all these defining inequalities hold with equality.

\begin{remark}
Our conventions differ from those used in \cite{fulton-young} and elsewhere, since we index our flags by codimension, rather than dimension. We choose this convention because it is most natural for our application in \cite{chan-pflueger-euler}, where we stratify sections of a line bundle by their vanishing order at a point.
For example, in \cite{fulton-young}, the associated permutation $w$ of two flags $V_\bu,W_\bu$ (indexed by dimension) is defined by $\dim V_a \cap W_b = r_w(a,b)$, where $w$ is a permutation of $\{1,2,\cdots,d\}$ and the rank function is $r_w(a,b) = \# \{i \leq a: w(i) \leq b\}$. There are two ways to translate our notation to the notation of \cite{fulton-young}.

\begin{enumerate}
\item Define an isomorphism $i: \Fl(H) \rightarrow \Fl(H^\vee)$ by $i(\pb) = V_\bu$, where $V_a = (P^a)^\perp$. If $i(\pb) = V_\bu$ and $i(\qb) = W_\bu$, then 
$$\dim V_a \cap W_b = a+b-d + \dim P^a \cap Q^b,$$
from which it follows that $\dim P^a \cap Q^b = r^\sigma(a,b)$ if and only if $\dim V_a \cap W_b = r_w(a,b)$, where $w(i) = \sigma(i-1)+1$ (in one-line notation, $w$ is obtained by adding one to all entries of $\sigma$). So $i(X_\sigma)$ is equal to the Schubert variety denoted $X_w$ in \cite{fulton-young}.
\item Define $V_a = P^{d-a}$ and $W_b = Q^{d-b}$. If $\dim P^a \cap Q^b = r^\sigma(a,b)$ for all $a,b$, then $\dim V_a \cap W_b = r_w(a,b)$, where $w(i) = d- \sigma(d-i)$ (in one-line notation, $w$ is obtained by adding $1$ to all entries of $\omega \sigma \omega$). So our $X_\sigma$ is equal to the variety denoted $X_w$ in \cite{fulton-young}.
\end{enumerate}

\end{remark}

We collect facts about $X_\sigma$. It is well-known that $X_\sigma$ is irreducible, normal, and Cohen-Macaulay, of codimension $\inv(\omega \sigma)$ in $\Fl(H)$. A criterion for whether $X_\sigma$ is regular is given by Lakshmibai and Sandhya \cite{lakshmibai-sandhya-criterion}: $X_\sigma$ is regular if and only if $\sigma$ is a $3120$ and $2301$-avoiding permutation.\footnote{In standard notation, $4231$ and $3412$-avoiding.} 
The singular locus of $X_\sigma$ is closed and a union of Borel orbits; therefore, it must be a union of varieties $X_{\sigma'}$ for 
$\sigma'\le \sigma$.
Lakshmibai-Sandhya conjectured a combinatorial description of which Schubert subvarieties $X_{\sigma'}$ occur, and their conjecture was proven independently by several groups \cite{billey-warrington-maximal, cortez-singularities, kassel-lascoux-reutenauer-singular, manivel-lieu}. The description shows that $\sigma'$ ranges over all permutations that are derived from minimal $3120$ and $2301$ patterns in $\sigma$ by a certain combinatorial modification, see e.g., \cite[\S1]{manivel-lieu}.  We note that the singular locus of $X_{\sigma}$ is a union of $X_{\sigma'}$ for %$\sigma'>\sigma$ 
$\sigma'<\sigma$ 
ranging over a certain set of permutations having at least two fewer inversions than $\sigma$.  In particular, $X_{\sigma}$ is regular in codimension 1.
% We don't really use that.

More generally, if $\ab$ is a nest of sets of coranks $0 = i_0 < \cdots < i_s = d$, then we define a Schubert variety in the partial flag variety $\Fl(i_0, \cdots, i_s; H)$ as follows.
$$\label{loc:xa}
X_{\ab}(F^\bu) = \{ \vb \in \Fl(i_0, \cdots, i_s; H):\ \dim V^{i_j} \cap F^b \geq r^{\sigma(\ab)}(i_j,b) \mbox{ for all } j \in [s], b \in [d] \}.
$$
We write $\widetilde{X}_{\ab}$ for the open locus where these defining inequalities hold with equality. The inverse image of $X_{\ab}$ under the forgetful map $\Fl(H) \rightarrow \Fl(i_0, \cdots, i_s; H)$ is equal to $X_{\sigma}(F^\bu)$. Since this forgetful map is a fiber bundle with smooth irreducible fibers, most of the geometric facts above carry over readily to $X_{\ab}$.

%%%%%%%%%
%%%%%%%%%
%%%%%%%%%

\subsection{Degeneracy loci and relative Schubert varieties}\label{ssec:degen}
We adopt the following notation convention. If $\cV,\cW$ are two sub-bundles of a vector bundle $\cH$ on a scheme $S$, we will write
$$\{x \in S:\ \dim \cV_x \cap \cW_x \geq r\}$$
as a shorthand for the subscheme defined by the degeneracy locus where the bundle map $\cV \rightarrow \cH/\cW$ has rank at most $\mathrm{rank}(\cV) - r$, defined locally as a determinantal variety in the usual way (e.g. as in \cite[\S 4]{fulton-flags} or \cite[\S II.4]{acgh}). In particular, we always mean this notation as a scheme-theoretic definition. %This abuse of notation will make many statements more concise and legible.

We will be concerned with degeneracy loci of the following form. For $\cH$ a rank $d$ vector bundle on a scheme $S$, and $\cP^\bu, \cQ^\bu$ complete flags in $\cH$, we consider the subscheme
\begin{equation} \label{eq:prelimDs}
 \left\{x \in S:\ 
\dim ( \cP^a )_x \cap (\cQ^{b})_x 
\geq 
r^\sigma(a,b) \mbox{ for all } a,b \in [d]\right\}.
\end{equation}

In fact, many of the inequalities in this definition are redundant.

\begin{fact} \label{fact:essSet} (\cite[Lemma 3.10]{fulton-flags})
The scheme described by Equation \eqref{eq:prelimDs} is equal to the scheme
$$\left\{ x \in S:\ 
\dim ( \cP^a )_x \cap (\cQ^{b})_x 
\geq 
r^\sigma(a,b) \mbox{ for all } (a,b) \in \Ess(\sigma)\right\}.$$
\end{fact}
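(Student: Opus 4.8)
The plan is to follow the argument of Fulton \cite{fulton-flags}, which separates into two independent parts: a short list of scheme-theoretic inclusions among determinantal loci, and a combinatorial ``straightening'' on the grid $[d]\times[d]$ that funnels an arbitrary rank condition toward the essential set. Throughout, abbreviate by $Z(a,b,r)$ the closed subscheme $\{x\in S:\dim(\cP^a)_x\cap(\cQ^b)_x\ge r\}$ of $S$, that is, the locus where the bundle map $\cP^a\to\cH/\cQ^b$ has rank at most $(d-a)-r$, cut out locally by the vanishing of the minors of the appropriate size of a matrix representing this map. The scheme of Equation~\eqref{eq:prelimDs} is $\bigcap_{a,b\in[d]}Z(a,b,r^\sigma(a,b))$, whose ideal sheaf is the sum of those of the individual $Z(a,b,r^\sigma(a,b))$, and the essential-set scheme is the analogous intersection taken only over $\Ess(\sigma)$. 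One containment of schemes being obvious, it suffices to prove that for each $(a,b)$ the ideal of $Z(a,b,r^\sigma(a,b))$ is contained in the sum of the ideals of $Z(a',b',r^\sigma(a',b'))$ over $(a',b')\in\Ess(\sigma)$.

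First I would record the local determinantal inclusions. Working locally on $S$ and trivializing, passing from $\cP^a$ to $\cP^{a\pm1}$, or from $\cH/\cQ^b$ to $\cH/\cQ^{b-1}$ or to $\cH/\cQ^{b+1}$, amounts to deleting or adjoining one column, respectively one row, of the matrix representing $\cP^a\to\cH/\cQ^b$. Two elementary facts about minors then yield, for all $a,b,r$, the scheme-theoretic containments $Z(a+1,b,r)\subseteq Z(a,b,r)$, $Z(a,b+1,r)\subseteq Z(a,b,r)$, $Z(a-1,b,r+1)\subseteq Z(a,b,r)$, and $Z(a,b-1,r+1)\subseteq Z(a,b,r)$. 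The first two are immediate, because the ideal of size-$k$ minors of a submatrix is contained in that of the ambient matrix. For the last two, Laplace expansion of a size-$(k+1)$ minor of the larger matrix along the extra column (respectively row) exhibits it as an entrywise combination of size-$k$ minors of the smaller matrix, so that $I_{k+1}(M)\subseteq I_k(M')$ whenever $M'$ is obtained from $M$ by deleting one row or one column; translating back via $\dim\ker=(\text{rank of the source})-(\text{rank of the map})$ gives the displayed inclusions.

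Next I would set up the combinatorics. Declare a \emph{move} from $(a,b)$ to $(a+1,b)$, $(a,b+1)$, $(a-1,b)$, or $(a,b-1)$ to be \emph{legal} when, respectively, $\sigma(a)<b$, $\sigma^{-1}(b)<a$, $\sigma(a-1)\ge b$, or $\sigma^{-1}(b-1)\ge a$. A short check from the definition of $r^\sigma$ shows that a legal move to a point with a larger coordinate leaves $r^\sigma$ unchanged, while a legal move to a point with a smaller coordinate raises $r^\sigma$ by exactly $1$; together with the inclusions above, this means that $Z(a,b,r^\sigma(a,b))$ contains $Z(a',b',r^\sigma(a',b'))$ whenever $(a',b')$ is the target of a legal move out of $(a,b)$. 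Comparing definitions, a point $(a,b)$ with $1\le a,b<d$ admits no legal move exactly when $(a,b)\in\Ess(\sigma)$, whereas if $a$ or $b$ lies in $\{0,d\}$ then $Z(a,b,r^\sigma(a,b))$ is all of $S$ (the relevant local matrix has no minor of the required size, as one checks directly). Finally, the legal-move digraph on $[d]\times[d]$ is acyclic: along every move $r^\sigma$ is non-decreasing, integer-valued, and bounded, and once $r^\sigma$ has stabilized only the two coordinate-increasing moves remain available, each of which strictly increases $a+b$. Hence from any $(a,b)$ one reaches, after finitely many legal moves, either a point of $\Ess(\sigma)$ or a boundary point; chaining the corresponding inclusions of subschemes---and using that $Z(a,b,r^\sigma(a,b))=S$ whenever the chain terminates at a boundary point---shows that the ideal of $Z(a,b,r^\sigma(a,b))$ lies in the sum of the ideals of $Z(a',b',r^\sigma(a',b'))$ over $(a',b')\in\Ess(\sigma)$, as required.

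The step I expect to require the most care is the local determinantal bookkeeping: under the convention that $\cP^a$ has codimension $a$ in $\cH$, one must pin down precisely which size of minors cuts out each $Z(a,b,r)$, and so explain why deleting a row versus a column produces the asymmetric pattern above, with two inclusions coming ``for free'' from passage to a submatrix and two from Laplace expansion. Once this is correctly set up, the acyclicity of the move digraph and the identification of $\Ess(\sigma)$ with its interior sinks are routine.
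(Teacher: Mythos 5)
The paper does not prove this statement; it simply cites \cite[Lemma 3.10]{fulton-flags}. Your proposal is essentially a correct reconstruction of Fulton's argument: reduce to showing that each $Z(a,b,r^\sigma(a,b))$ scheme-theoretically contains the essential-set intersection, establish the four ``adjacent'' containments of determinantal loci locally, translate the non-essentiality conditions into a legal-move digraph on $[d]\times[d]$ whose interior sinks are exactly $\Ess(\sigma)$, verify boundary loci are all of $S$, and chain containments along any legal path (terminating because $r^\sigma$ is non-decreasing and bounded and the remaining moves increase $a+b$). The combinatorics of the move digraph, the identification of $\Ess(\sigma)$ as the interior vertices with no outgoing legal edge, and the acyclicity argument are all correct.

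The one slip is exactly in the place you flag. With $\cP^a$ of codimension $a$ and $\cQ^b$ of codimension $b$, the matrix $M_{a,b}$ for $\cP^a\to\cH/\cQ^b$ has size $b\times(d-a)$, and $Z(a,b,r)$ is cut out by its minors of size $(d-a)-r+1$. Passing $a\mapsto a-1$ (adding a column, keeping $r+1$) or $b\mapsto b+1$ (adding a row, keeping $r$) leaves the required minor size unchanged, so $Z(a-1,b,r+1)\subseteq Z(a,b,r)$ and $Z(a,b+1,r)\subseteq Z(a,b,r)$ are the two ``for free'' inclusions, obtained because minors of a submatrix are minors of the ambient matrix. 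By contrast $a\mapsto a+1$ (deleting a column, keeping $r$) and $b\mapsto b-1$ (deleting a row, keeping $r+1$) each \emph{drop} the required minor size by one, so $Z(a+1,b,r)\subseteq Z(a,b,r)$ and $Z(a,b-1,r+1)\subseteq Z(a,b,r)$ are the two requiring Laplace expansion, via $I_k(M)\subseteq I_{k-1}(M')$ for a deleted-row-or-column submatrix $M'$. You had the split as ``the first two free, the last two Laplace,'' which mixes these up; one of each pair is attributed to the wrong mechanism. This is a pure bookkeeping error with no effect on the structure of the argument, and your final paragraph correctly anticipates it as the point needing care.
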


\begin{remark} \label{rem:essDiff}
The definition of the essential set in \cite{fulton-flags} is different from ours, because the degeneracy loci under consideration are defined by $\mathrm{rank}(E_p \rightarrow F_q) \leq r_w(q,p)$, where $E_1 \hookrightarrow E_2 \hookrightarrow \cdots E_n$ and $F_n \twoheadrightarrow F_{n-1} \twoheadrightarrow \cdots \twoheadrightarrow F_1$ are vector bundles indexed by rank. The essential set of \cite{fulton-flags} is the set of $(q,p)$ for which the condition $\mathrm{rank}(E_p \rightarrow F_q) \leq r_w(q,p)$ is essential. Our definition is obtained by a straightforward translation. %, and is more convenient for our purposes.
\end{remark}

In light of Fact \ref{fact:essSet}, we make the following definition, which allows for partial flags.

\begin{defn} \label{def:dst}
Let $\cH$ a rank $d$ vector bundle on a scheme $S$, and $\cP^\bu = (\cP^{i_0} \supset \cdots \supset \cP^{i_s})$, $\cQ^\bu = (\cQ^{j_0} \supset \cdots \supset \cQ^{j_t} )$ be flags in $\cH$. Let $\sigma$ be any permutation such that $\Ess(\sigma) \subseteq \{i_0, \cdots, i_s\} \times \{j_0, \cdots, j_t\}$. Define a subscheme 
$$D_\sigma(\cP^\bu; \cQ^\bu) = \left\{x \in S:\ 
\dim ( \cP^a )_x \cap (\cQ^{b})_x 
\geq 
r^\sigma(a,b) \mbox{ for all } a,b \in \Ess(\sigma) \right\}.$$
When $\cpb,\cqb$ are complete flags, also define $\widetilde{D}_\sigma(\cP^\bu;\cQ^\bu)$ to be the open subscheme where we have equality $\dim (\cP^a)_x \cap (\cQ^b)_x = r^{\sigma}(a,b)$ for all $(a,b) \in [d]^2$ (not only those in $\Ess(\sigma)$). Thus $\widetilde{D}_\sigma(\cP^\bu; \cQ^\bu)$ is the locus where the two flags have associated permutation $\sigma$, and $D_\sigma(\cP^\bu; \cQ^\bu)$ is the locus where the two flags have associated permutation 
%at least 
at most
$\sigma$ in Bruhat order.

\begin{remark} \label{rem:essentialDescents}
Suppose that $\cqb$ is complete, and the strata of $\cpb$ have coranks $0 = i_0 < \cdots < i_s = d$. Then the following is a useful sufficient condition for $D_\sigma(\cpb; \cqb)$ to be well-defined: for all $0 \leq j < s$, $\sigma(i_{j}) < \sigma(i_{j} + 1) < \cdots < \sigma(i_{j+1}-1)$. This condition ensures that $\sigma(a-1) > \sigma(a)$ for all $a$ except possibly when $a \in \{i_0, \cdots, i_s\}$ and thus $\Ess(\sigma) \subseteq \{i_0, \cdots, i_s\} \times [d]$. In other words, if $\ab$ is any nest of sets with coranks $i_0, \cdots, i_s$, then $D_{\sigma(\ab)}(\cpb; \cqb)$ is well-defined. 
\end{remark}

We mention some important geometric facts about degeneracy loci, stated at the level of generality we need; see \cite{fulton-flags} for the more general statement, including an intersection theory result.

\begin{fact} (\cite[Theorem 8.2]{fulton-flags}) \label{fact:fultonDegen}
Suppose $S$ is Cohen-Macaulay and pure dimensional. Then any component of $\Ds(\cpb; \cqb)$ has codimension at most $\inv(\omega \sigma)$. If the codimension of $\Ds$ is exactly $\inv(\omega \sigma)$, then $\Ds$ is Cohen-Macaulay.
\end{fact}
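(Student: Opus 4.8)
\medskip
\noindent\textbf{Proof proposal.} This is \cite[Theorem 8.2]{fulton-flags}; I outline how I would prove it. The plan is to reduce to a ``universal'' situation in which the statement is classical, and then transfer the conclusion along a morphism.

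Both assertions are local on $S$, so I would first take $S=\Spec R$ affine and $\cH$ trivialized; a choice of frame then presents the flags $\cpb,\cqb$ by matrices over $R$, hence a morphism $\phi\colon S\to Y$, where $Y$ is a product of Grassmannians with one factor $\Gr(d-a,d)\times\Gr(d-b,d)$ for each $(a,b)\in\Ess(\sigma)$. By Fact~\ref{fact:essSet}, $\Ds(\cpb;\cqb)$ is exactly the scheme-theoretic preimage $\phi^{-1}(\Omega_\sigma)$, where $\Omega_\sigma\subseteq Y$ is the ``universal'' locus cut out by the same rank inequalities on the tautological subbundles. The properties I need of $\Omega_\sigma$ are that it is irreducible, Cohen--Macaulay, and of codimension exactly $\inv(\omega\sigma)$ in the smooth scheme $Y$. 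When the essential set has a single element these are the classical facts that a generic determinantal variety has the expected codimension and is Cohen--Macaulay (Eagon--Northcott, Hochster--Eagon); in general $\Omega_\sigma$ is a Schubert variety in $Y$ (equivalently, in a matrix model, a matrix Schubert variety), its dimension is given by an orbit computation, and its Cohen--Macaulayness is part of the general theory of Schubert varieties, valid in every characteristic.

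Granting this, the codimension bound for $\Ds$ over a general Cohen--Macaulay $S$ reduces, via the presentation $\Ds=\phi^{-1}(\Omega_\sigma)$, to the codimension count for $\Omega_\sigma$ itself: a degeneracy locus cannot be smaller than its universal model, so $\codim_S\phi^{-1}(\Omega_\sigma)\le\codim_Y\Omega_\sigma=\inv(\omega\sigma)$ at each point of $\Ds$. For the Cohen--Macaulay assertion I would use the resolution-pullback argument. Since $\Omega_\sigma$ is Cohen--Macaulay of codimension $c:=\inv(\omega\sigma)$ in the regular scheme $Y$, the sheaf $\cO_{\Omega_\sigma}$ has a finite locally free resolution of length exactly $c$ (Auslander--Buchsbaum). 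Pulling it back along $\phi$ produces a length-$c$ complex of locally free $\cO_S$-modules with $H_0=\cO_{\Ds}$; because $S$ is Cohen--Macaulay and $\Ds$ has codimension $\ge c$ (indeed $=c$), the Peskine--Szpiro acyclicity lemma shows this complex is exact, hence a locally free resolution of $\cO_{\Ds}$ of length $c$. Thus $\operatorname{pd}_{\cO_S}\cO_{\Ds}\le c$, and Auslander--Buchsbaum together with the Cohen--Macaulayness of $S$ gives, at each point $x$ of $\Ds$, that $\operatorname{depth}_x\cO_{\Ds}\ge\dim_x S-c=\dim_x\Ds$, so $\Ds$ is Cohen--Macaulay.

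The step genuinely imported from structure theory, and the main obstacle to a self-contained treatment, is the universal statement in the multi-condition case: that $\Omega_\sigma$ has the expected codimension $\inv(\omega\sigma)$ and is Cohen--Macaulay when several essential rank conditions are imposed simultaneously. Unlike the single-condition case this is not formal; it relies on either Fulton's combinatorial analysis or the subword-complex / Gr\"obner machinery for matrix Schubert varieties. By contrast, the two soft ingredients here --- the local trivialization and the acyclicity-lemma transfer --- are routine.
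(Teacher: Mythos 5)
The paper states this as a Fact and cites \cite[Theorem 8.2]{fulton-flags} without reproducing an argument, so there is no internal proof to compare against; I will simply evaluate your sketch on its own terms.

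Your overall strategy --- reduce to a universal model over a smooth scheme where the locus is known to be Cohen--Macaulay of the right codimension, then transfer via a finite free resolution together with the Peskine--Szpiro acyclicity lemma and Auslander--Buchsbaum --- is the classical and correct strategy, and it is essentially what Fulton does. But there is a genuine gap in your choice of universal model: you map to $Y = \prod_{(a,b)\in\Ess(\sigma)}\bigl(\Gr(d-a,d)\times\Gr(d-b,d)\bigr)$, a product of \emph{mutually independent} Grassmannian pairs, and define $\Omega_\sigma\subseteq Y$ by imposing the essential rank conditions one factor at a time. That $\Omega_\sigma$ does \emph{not} have codimension $\inv(\omega\sigma)$ as soon as $\Ess(\sigma)$ has more than one element, because the target forgets the nesting $\cP^{a+1}\subset\cP^a$: over $Y$ the various ``$P^a$'' are unrelated subspaces, so $\Omega_\sigma$ is literally a product of independent determinantal loci and its codimension is the \emph{sum} of the separate codimensions, which is strictly larger than $\inv(\omega\sigma)$ in general. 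For a concrete counterexample take $d=3$, $\sigma=\mathrm{id}$: then $\Ess(\mathrm{id})=\{(1,1),(2,2)\}$ and $\inv(\omega\sigma)=3$, but in $Y=\Gr(2,3)^2\times\Gr(1,3)^2$ the locus $\Omega_{\mathrm{id}}=\{P^1=Q^1\}\times\{P^2=Q^2\}$ is a product of two diagonals, each of codimension $2$, so $\codim_Y\Omega_{\mathrm{id}}=4\neq 3$. (In the double flag variety $\Fl(3)\times\Fl(3)$ the corresponding locus is the small diagonal of codimension $3$, which is the right number.)

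This discrepancy sinks both halves of your transfer. The codimension bound you deduce becomes $\codim_S\Ds\le\codim_Y\Omega_\sigma$, which is strictly weaker than $\codim_S\Ds\le\inv(\omega\sigma)$ when $\Ess(\sigma)$ has several elements. Worse, in the Cohen--Macaulayness step the pulled-back free resolution has length $c'=\codim_Y\Omega_\sigma$, while the grade of the defining ideal of $\Ds$ in $S$ is its codimension, assumed to be $\inv(\omega\sigma)<c'$; the acyclicity lemma then does \emph{not} apply, and the pulled-back complex need not be exact. The fix is to map instead to the double (partial) flag variety --- a product of \emph{two} flag varieties, one for each of $\cpb,\cqb$, carrying full nested tautological flags --- where the degeneracy locus really is a $\GL_d$-translate of a Schubert variety, hence irreducible, Cohen--Macaulay, and of codimension exactly $\inv(\omega\sigma)$. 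With that corrected universal model your resolution-pullback argument goes through verbatim; this is also the model Fulton works with. Your essential-set reduction (Fact~\ref{fact:essSet}) tells you which rank inequalities scheme-theoretically define $\Ds$, but it does not license discarding the flag nesting conditions from the target of $\phi$.
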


Given $\ell+1$ flags $\cvb, \cpb_1,\ldots,\cpb_\ell$ and $\ell$ permutations $\sigma_1,\ldots,\sigma_\ell$, we also use the following abbreviation.
$$
\label{loc:dss}
D_{\sigma_1,\ldots,\sigma_\ell}(\cvb; \cpb_1,\ldots,\cpb_\ell) = D_{\sigma_1}(\cvb; \cpb_1) \cap \cdots \cap D_{\sigma_\ell}(\cvb; \cpb_\ell).
$$
When the flags are clear from context, we will omit the arguments and write simply $D_\sigma$ or $D_{\sigma_1,\ldots,\sigma_\ell}$.
\end{defn}

\begin{remark} \label{rem:dsFromSchubert}
It is sometimes convenient to view $D_\sigma$ locally as the inverse image of a Schubert variety. If $U \subset S$ is an open subscheme on which $\cH$ is trivial, 
then we may 
choose completions of $\cpb$ and $\cqb$ 
and locally trivialize $\cH$ 
in a way that makes the completion of $\cqb$ constant. Then the completion of $\cpb$ defines a morphism $p: S \rightarrow \Fl(d)$ (under which the completion of $\cpb$ is the pullback of the tautological bundle), and we have, scheme-theoretically,
$$D_\sigma(\cpb; \cqb) = p^{-1} (X_\sigma).$$
\end{remark}

\begin{example} (Relative Schubert varieties)
Let $S$ be a scheme, $\cH$ a rank-$d$ vector bundle on $S$, and $\cP^\bu$ a complete flag in $\cH$. Let $\pi: \Fl(\cH) \rightarrow S$ denote the relative flag variety. For every $\sigma \in S_d$, there is a relative Schubert subvariety $X_\sigma(\cpb) \subseteq \Fl(\cH)$. These subvarieties are important special cases of the degeneracy loci defined above, namely
$$X_{\sigma}(\cpb) = D_\sigma(\cvb; \pi^\ast \cP^\bu)$$
where $\cvb$ is the tautological flag bundle. Similarly, we obtain relative Schubert varieties in partial flag varieties:
$$X_{\ab}(\cpb) = D_{\sigma(\ab)}(\cvb; \pi^\ast \cpb).$$
\end{example}

\section{Versality}

In this subsection, we define versality of complete flags, and prove several criteria for it. We work exclusively with complete flags in this subsection; results for incomplete flags can be deduced from the case of complete flags.

Let $\cH$ be a vector bundle of rank $d$ on a base scheme $S$.
Denote by $\Fl(d)=\Fl(k^d)$ the variety of complete flags in the standard vector space $k^d$, and denote by $\Fr(\cH) \rightarrow S$ the frame bundle of $\cH$. Then a complete flag $\cP^\bu$ in $\cH$ uniquely determines a morphism of schemes $\Fr(\cH) \rightarrow \Fl(d)$. %The main new definition of the paper is below.

\begin{definition} \label{def:versal}
With the notation above, suppose that $\cpb_1, \cdots, \cpb_\ell$ are complete flags in $\cH$, inducing a morphism
$$p:\ \Fr(\cH) \rightarrow \Fl(d)^\ell.$$
Call the $\ell$-tuple of flags $(\cP_1, \cdots, \cP_\ell)$ \emph{versal} if $p $ is a smooth morphism. Call the flags \emph{versal at $x \in S$} if they are versal when restricted to some neighborhood of $x$.
\end{definition}

\noindent Observe that any subset of a versal $\ell-$tuple of flags is again versal, since the projection $\Fl(d)^\ell \rightarrow \Fl(d)^{\ell'}$ is smooth for all $\ell' < \ell$.

When $\ell=1$, versality is automatic. When $\ell = 2$ and $S = \Spec k$, two flags are versal if and only if they are transverse, as is explained in Example \ref{ex:transverseVersal}. When $\ell = 2$ but $S$ is a more general scheme, Definition \ref{def:versal} is equivalent to a geometric condition (Lemma \ref{lem:versalityCriterion2}) that roughly says that the locus where $\cP^\bu, \cQ^\bu$ are nontransverse is stratified by smooth varieties of specific codimension. This stratification is indexed by permutations. 

Our first goal is a linear-algebraic criterion for versality (Proposition \ref{def:versalityCriterion}), for which we need some preliminary notions.
For complete flags $P^\bu_1, \cdots, P^\bu_\ell$ in a vector space $H$, let %$$M(H; P^\bu_1, \cdots, P^\bu_\ell) = \left[ \prod_{i=1}^\ell \End H / \Fix P^\bu_i \right] / \Delta$$
%where $\Delta$ denotes the image of the diagonal map $\End H \rightarrow \prod_{i=1}^\ell \End H / \Fix P^\bu_i$.
$$M=M(H; P^\bu_1, \cdots, P^\bu_\ell)=\mathrm{coker} (\End H \xrightarrow{\Delta} \prod_{i=1}^\ell \End H / \Fix P^\bu_i),
\label{loc:mdefn}
$$
where %the morphism $\End H \rightarrow \prod_{i=1}^\ell \End H / \Fix P^\bu_i$
$\Delta$ is the diagonal map.

This vector space $M$ is the space of {\em relative} first-order deformations of the flags.  Indeed, each factor $\End H / \Fix P^\bu_i$ is naturally identified with the tangent space at $[P^\bu_i]$ to the flag variety, %so the product of these factors describes simultaneous first-order deformations of all $\ell$ flags. The 
while the image of $\Delta$ corresponds to simultaneous deformations arising from a change of basis for $H$. 

Now given $x\in S$, let 
$$\delta_x = \delta_x(\cH; \cP^\bu_1, \cdots, \cP^\bu_\ell):\ T_x S \rightarrow M(\cH_x; (P^\bu_1)_x, \cdots, (P^\bu_\ell)_x) \label{loc:deltadef}$$
denote the natural linear map encoding the first-order deformations of the $P^\bu_i$ relative to each other induced by a first-order deformation in $S$. A precise definition is as follows.
Note the natural map $p\colon \Fr(\cH)\to \Fl(d)^\ell$ induced by the flags $\cP_1^\bu,\ldots, \cP_\ell^\bu$.  Shrink $S$ so that $\cH$ is trivial.  Any section $s\colon S\to \Fr(\cH)$ of $\Fr(\cH)\to S$ induces a linear map
\begin{equation} \label{eq:deltax}
\delta_x\colon T_x S \longrightarrow T_{(p\circ s)(x)} \Fl(d)^\ell \longrightarrow M,
\end{equation}
where the first arrow is the differential of $p\circ s$ at $x$, and the second is induced by $s(x)$.

Moreover, any two sections $s,s'$ are related by an element of $\mathrm{GL}_d(S)$. That is, there is a morphism $c\colon S\to \mathrm{GL}_d$ with the following commuting diagram.
$$\xymatrix{S\ar@{=}[d] \ar[r]^-{c\times s} &\mathrm{GL}_d \times \Fr(\cH) \ar[d] \ar[r] & \mathrm{GL}_d \times \Fl(d)^\ell \ar[d] 
\\ S \ar[r]^-{s'}          &\Fr(\cH) \ar[r] & \Fl(d)^\ell}
$$
Passing to tangent spaces verifies that $\delta_x$ does not depend on choice of $s$.

%%%%%
%\textcolor{red}{cut}

\begin{prop} \label{def:versalityCriterion}
Let $S$ be a scheme with a vector bundle $\cH$ and complete flags $\cP^\bu_1, \cdots, \cP^\bu_\ell$. For any $x \in S$, these flags are versal at $x$ if and only if $S$ is smooth at $x$ and $\delta_x$ is surjective.
\end{prop}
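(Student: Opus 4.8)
The plan is to unwind both sides of the equivalence in terms of the smooth morphism $p\colon \Fr(\cH)\to\Fl(d)^\ell$ and the diagram/section apparatus set up just before the statement, and to observe that both conditions are detected on tangent spaces. First I would recall that smoothness of a morphism of finite-type $k$-schemes at a point can be checked fiberwise on closed points together with a dimension/tangent-space count; more precisely, for $p\colon\Fr(\cH)\to\Fl(d)^\ell$, smoothness at a point $\xi$ lying over $x\in S$ is equivalent to the combination of (i) $\Fr(\cH)$ being smooth at $\xi$ (equivalently, since $\Fr(\cH)\to S$ is smooth surjective, $S$ smooth at $x$), (ii) the target $\Fl(d)^\ell$ being smooth (which it always is), and (iii) surjectivity of the differential $d_\xi p\colon T_\xi\Fr(\cH)\to T_{p(\xi)}\Fl(d)^\ell$. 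So versality at $x$ is equivalent to: $S$ smooth at $x$, and $d_\xi p$ surjective for all $\xi$ in the fiber of $\Fr(\cH)$ over $x$ (equivalently, by $\GL_d$-equivariance, for one such $\xi$). This reduces the proposition to identifying the surjectivity of $d_\xi p$ with the surjectivity of $\delta_x$.

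Next I would carry out that identification. Shrink $S$ so $\cH$ is trivial, pick a section $s\colon S\to\Fr(\cH)$ through $\xi=s(x)$, and split $T_\xi\Fr(\cH)\cong T_x S\oplus T_\xi(\text{fiber})$ using $ds$; the fiber over $x$ is a $\GL_d$-torsor, so $T_\xi(\text{fiber})\cong \mathfrak{gl}_d=\End H$. Under this splitting, the fiber direction maps onto the image of the diagonal $\Delta\colon\End H\to\prod_i\End H/\Fix P_i^\bu$ (this is precisely the content of the remark in the text that the image of $\Delta$ corresponds to deformations from change of basis), and the $T_xS$ direction maps via $\delta_x$ composed with the quotient $\prod_i \End H/\Fix P_i^\bu\to M$. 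Here I should record that $T_{p(\xi)}\Fl(d)^\ell\cong\prod_{i=1}^\ell \End H/\Fix P_i^\bu$ and that $M$ is by definition the cokernel of $\Delta$. Putting these together, the image of $d_\xi p$ is (image of $\Delta$) $+$ (preimage under the quotient-to-$M$ map of the image of $\delta_x$), so $d_\xi p$ is surjective onto $\prod_i\End H/\Fix P_i^\bu$ if and only if $\delta_x$ is surjective onto $M$. Combining with the previous paragraph gives the proposition.

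I expect the main obstacle to be the bookkeeping in the tangent-space identification: being careful that the composite $T_xS\xrightarrow{d(p\circ s)} T_{p(\xi)}\Fl(d)^\ell\to M$ of Equation~\eqref{eq:deltax} really is $\delta_x$ (independence of $s$ is handled by the $\GL_d$-change-of-frame diagram already in the text), and that the fiber-direction image is exactly $\operatorname{image}(\Delta)$ rather than something larger or smaller. A clean way to sidestep explicit computation is to note that $p$ factors as $\Fr(\cH)\to S\to(\text{pt})$ composed appropriately is wrong—rather, one should use that for any section $s$, the map $p$ restricted to $s(S)$ has differential $d(p\circ s)=\delta_x$ modulo $\operatorname{image}\Delta$ by construction, while restricted to a fiber it is the orbit map of $\GL_d$ acting on $\Fl(d)^\ell$, whose differential at $\xi$ is $\Delta$ up to the identification $\mathfrak{gl}_d=\End H$. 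The only subtlety worth a sentence is that $\Fl(d)^\ell$ is smooth and $\Fr(\cH)\to S$ is smooth surjective, so "$\Fr(\cH)$ smooth at $\xi$" and "$S$ smooth at $x$" are interchangeable, which closes the argument.
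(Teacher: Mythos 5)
Your proposal is correct and follows essentially the same argument as the paper: reduce versality at $x$ to smoothness of $S$ at $x$ plus surjectivity of $dp_\xi$ (using that $\Fl(d)^\ell$ is nonsingular and $\Fr(\cH)\to S$ is a $\GL_d$-torsor), then identify $\operatorname{im}(dp_\xi)$ via a section $s$ and the decomposition of $T_\xi\Fr(\cH)$ into a fiber direction mapping onto $\operatorname{im}\Delta$ and a horizontal direction governed by $\delta_x$. The paper phrases the identification slightly differently, working with the kernel of $T_{(p\circ s)(x)}\Fl(d)^\ell\to M$ rather than the image decomposition, but the content is the same.
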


\begin{proof}
Fix $x \in S$ and a point $y \in \Fr(\cH)$ in the fiber over $x$. First, we claim that the map $\delta_x$ is surjective if and only if the differential $dp_y$ of the map $p\colon \Fr(\cH) \rightarrow \Fl(d)^\ell$ is surjective. Shrinking $S$ if necessary, choose a section $s: S \rightarrow \Fr(\cH)$ such that $y = s(x)$. We use the description of $\delta_x$ in Equation \ref{eq:deltax}. The kernel of the map $T_{(p \circ s)(x)} \Fl(d)^\ell \rightarrow M$ is equal to the tangent space to the $\GL_d$-orbit of $(p \circ s)(x)$. Since $p$ is equivariant, this is the image under $dp_{s(x)}$ of the $\GL_d$-orbit of $s(x)$. Therefore the map $T_{s(x)} \Fr(\cH) \rightarrow M$ is surjective if and only if $dp_{s(x)}$ is surjective. Furthermore, the $\GL_d$-orbit of $s(x)$ is the fiber of $x$ in $\Fr(\cH)$, so its tangent space is complementary to the image of $ds_x$. Therefore the image of $\delta_x$ is equal to the image of $T_{s(x)} \Fr(\cH) \rightarrow M$. Putting this together, $\delta_x$ is surjective if and only if $dp_{s(x)}$ is surjective.

Next, observe that since $\Fr(\cH)$ is a $\GL_d$-torsor, it is smooth at $y$ if and only if $S$ is smooth at $x$.

Suppose that the flags are versal at $x$. Since $\Fl(d)^\ell$ is a nonsingular variety, the structure map $\Fl(d)^\ell \rightarrow \Spec k$ is smooth, hence the composition $\Fr(\cH) \rightarrow \Spec k$ is smooth at $y$, i.e., $y$ is a smooth point of $\Fr(\cH)$ and $x$ is a smooth point of $S$. Since $p$ is a smooth morphism of nonsingular varieties in a neighborhood of $y$, the differential $dp_y$ is surjective \cite[10.4]{hartshorne-algebraic}. It follows that $\delta_x$ is surjective as well.

Now suppose $x \in S$ is a smooth point and $\delta_x$ is surjective. Then $y$ is a smooth point of $\Fr(\cH)$ and $dp_y$ is surjective, so $p$ is a morphism of nonsingular varieties with surjective differential around $y$. Hence $p$ smooth at $y$, and the flags are versal at $x$.
\end{proof}

%%% STACKY REMARK
\begin{remark} \label{rem:stacky}
The definition of versality and the criterion of Proposition \ref{def:versalityCriterion} has a stack-theoretic description, as follows. A choice of $\ell$ complete flags on $S$ is equivalent to a morphism $\overline{p}$ from $S$ to the quotient stack $\left[ \Fl(d)^\ell / \GL_d \right]$, which may be regarded as the moduli stack of $\ell$-tuples of flags. This morphism, along with the induced morphism $p: \Fr(\cH) \rightarrow \Fl(d)^\ell$ discussed above, form a cartesian diagram as follows.
$$
\xymatrix{
\Fr(\cH)\ar[r]^-{p} \ar[d] & \Fl(d)^\ell \ar[d] \\
S \ar[r]^-{\overline{p}} & \left[ \Fl(d)^\ell / \GL_d \right]
}
$$
The vertical arrows are $\GL_d$-torsors, and it follows that $p$ is smooth if and only if $\overline{p}$ is smooth. So the tuple of flags is versal if and only if it determines a smooth morphism to the moduli stack; this accords with the usual use of ``versal'' in deformation theory. 
The differential of $\overline{p}$ at $x \in S$ may be identified with a map from $T_x S$ to a two-term complex $\End H \xrightarrow{\Delta} \prod_{i=1}^\ell \End H / \Fix P^\bu_i$, which is surjective if and only if the linear map $\delta_x$ is surjective. Hence Proposition \ref{def:versalityCriterion} amounts to the fact that $\overline{p}$ is smooth if and only if it has smooth domain and surjective differential.
\end{remark}
%%%

\begin{example} \label{ex:transverseVersal}
(Versality of fixed flags)
A pair of two complete flags $\cP^\bu, \cQ^\bu$ are versal in a neighborhood of any smooth point $x \in S$ where $\cP^\bu_x$ and $\cQ^\bu_x$ are transverse. This is because $\Fix P^\bu_x + \Fix Q^\bu_x = \End H$ by Fact \ref{fact:invFix}, which implies (and is equivalent to) $M(H_x; P_x^\bu, Q_x^\bu) = 0$. 
If $S=\Spec k$, the flags are transverse if and only if they are versal. Therefore versality is a generalization of transversality.

When $\ell > 2$ and $d > 2$, then $\ell$ flags over $S=\Spec k$ are {\em never} versal since $$\dim \prod \End H/\Fix \pb_i = \ell \binom{d}{2} \geq 3 \binom{d}{2} \geq d^2 = \dim \End H,$$ so $\End H \rightarrow \prod \End H / \Fix P^\bu$ cannot be surjective because the kernel always contains the identity and is therefore nontrivial. When $d=2$, the only way versality over $\Spec k$ can occur is if $\ell \leq 3$ and the flags are distinct.
\end{example}

Adding a tautological flag bundle preserves versality:

\begin{lemma} \label{lem:versalityCriterion3}
With $\cH$ a vector bundle on a smooth scheme $S$, if $\cpb_1,\ldots,\cpb_\ell$ are versal complete flags on $S$, and $\pi:\Fl(\cH) \rightarrow S$ is the flag variety of $\cH$ with tautological bundle $\cV^\bu$, then $\pi^\ast \cpb_1, \ldots,\pi^\ast \cpb_\ell, \cV^\bu$ are versal flags of $\pi^\ast \cH$ on $\Fl(\cH)$. 
\end{lemma}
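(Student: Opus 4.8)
The plan is to use the linear-algebraic criterion for versality, Proposition~\ref{def:versalityCriterion}: I must show that $\Fl(\cH)$ is smooth and that the deformation map $\delta_y$ is surjective at every point $y \in \Fl(\cH)$. Smoothness is immediate since $\pi$ is a smooth morphism (the relative flag variety has smooth proper fibers) and $S$ is smooth by hypothesis. So the content is the surjectivity of
$$\delta_y\colon T_y \Fl(\cH) \longrightarrow M\big(\cH_y;\, (\pi^*P^\bu_1)_y, \ldots, (\pi^*P^\bu_\ell)_y,\, V^\bu_y\big).$$
Write $x = \pi(y)$ and $H = \cH_x$. The key observation is that the target $M$ here and the target $M(H; (P^\bu_1)_x,\ldots,(P^\bu_\ell)_x)$ of $\delta_x$ downstairs are closely related: there is a natural short exact sequence
$$0 \to \End H/\Fix V^\bu_y \to M\big(H; (P^\bu_i)_x, V^\bu_y\big) \to M\big(H; (P^\bu_i)_x\big) \to 0,$$
obtained by comparing the two cokernel presentations (the map $\Delta$ for $\ell+1$ flags factors through the map $\Delta$ for the first $\ell$), using that $\Fix V^\bu_y \subseteq \End H$ contributes no new element of the diagonal kernel beyond the identity.

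**Main steps.** First I would set up this comparison SES carefully, being explicit that the quotient map $M(H;(P^\bu_i)_x,V^\bu_y) \twoheadrightarrow M(H;(P^\bu_i)_x)$ is induced by forgetting the last factor. Second, I would decompose the differential: the vertical tangent space $T_y(\pi^{-1}(x)) = T_y\Fl(H)$ maps, via $\delta_y$ restricted to vertical directions, onto the subspace $\End H/\Fix V^\bu_y$ of $M$ — this is exactly the statement that inside a single fixed flag variety the tautological flag deforms freely relative to the (constant, pulled-back) flags $P^\bu_i$, which is the $S = \Spec k$, one-varying-flag instance of surjectivity and is essentially tautological from the identification of $\End H/\Fix V^\bu_y$ with $T_{[V^\bu_y]}\Fl(H)$. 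Third, the composition of $\delta_y$ with the quotient $M \twoheadrightarrow M(H;(P^\bu_i)_x)$ factors through $d\pi_y\colon T_y\Fl(\cH) \to T_xS$ followed by $\delta_x$ (naturality: pulling back the flags $P^\bu_i$ along $\pi$ and composing with the section, the deformation map for $\pi^*P^\bu_i$ is $\delta_x \circ d\pi_y$); since the flags $\cpb_i$ are versal on $S$, $\delta_x$ is surjective, and $d\pi_y$ is surjective because $\pi$ is smooth, so this composition is surjective. Combining the second and third steps with the SES and a diagram chase gives surjectivity of $\delta_y$.

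**Expected obstacle.** The routine but delicate part is verifying the naturality statement in the third step — that $\delta_{\pi^*\!\text{-flags}} = \delta_x \circ d\pi_y$ on the $M(H;(P^\bu_i)_x)$-quotient — which requires tracking the frame-bundle description of $\delta$ through the base change $\Fr(\pi^*\cH) = \pi^*\Fr(\cH) \to \Fr(\cH)$ and checking the section compatibilities; the paper's discussion showing $\delta_x$ is well-defined independently of the section (via the $\GL_d$-action) is exactly what's needed here. The second step is conceptually easy but should be stated with care: one wants that the map $T_y\Fl(H) \to \End H / \Fix V^\bu_y$ is an isomorphism, which is just the standard identification of the tangent space to a flag variety, so no real obstacle there. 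I do not anticipate a genuine difficulty — this lemma is essentially a bookkeeping exercise built on Proposition~\ref{def:versalityCriterion} — but the SES comparison of the two $M$'s is the linchpin and should be written out explicitly.
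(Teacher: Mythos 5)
Your proof is correct and takes essentially the same approach as the paper's: both reduce to surjectivity of the versality differential at a point $y$ of $\Fl(\cH)$, split $T_y\Fl(\cH)$ into vertical and horizontal parts, and observe that the vertical part accounts for deformations of the tautological flag (via the isomorphism $T_y\Fl(H)\cong\End H/\Fix V^\bu_y$) while versality of $\cpb_1,\ldots,\cpb_\ell$ on $S$ together with smoothness of $\pi$ handles the remaining factors. The one slip is that the left-hand map in your proposed sequence $0\to\End H/\Fix V^\bu_y\to M\big(H;(P^\bu_i)_x,V^\bu_y\big)\to M\big(H;(P^\bu_i)_x\big)\to 0$ is not injective in general (its kernel is $\bigcap_i\Fix(P^\bu_i)_x$ modulo its intersection with $\Fix V^\bu_y$), but your diagram chase only needs right-exactness, which does hold, so the argument is unharmed; the paper avoids this issue by working directly with surjectivity of $T_{(x,y)}\Fl(\cH)\oplus\End H\to\prod_i\End H/\Fix(P^\bu_i)_x\times\End H/\Fix V^\bu_y$ rather than passing to the cokernel spaces $M$.
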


\begin{proof}
Fix $x\in S$, and define $P^\bu_i = (\cpb_i)_x$. Shrinking $S$, we may assume $\cH = H\times S$. For any point $y = [\qb] \in \Fl(H)$, versality at $(x,y) \in \Fl(\cH)$ is equivalent to the surjectivity of the linear map
$$T_{(x,y)} \Fl(\cH) \oplus \End H \to \prod \End H/\Fix \pb_i \times \End H/\Fix \qb.$$
We have $T_{(x,y)}\Fl(\cH) = T_x S \oplus T_y \Fl(H)$.  Then surjectivity of the above map follows from the fact that $T_x S \oplus \End H \to \prod \End H/\Fix \pb_i$ is surjective by the versality hypothesis, and $T_y \Fl(H) \to \End H /\Fix \qb$ is an isomorphism.
\end{proof}

 The linear maps $\delta_x$ provide a description of tangent spaces to degeneracy loci $D_\sigma(\cpb; \cqb)$ defined in Definition~\ref{def:dst}, and intersections thereof.

\begin{lemma} \label{lem:tspaceSSigma}Let $D_\sigma(\cpb;\cqb)$ be as in Definition~\ref{def:dst}.
\enumnow{
\item At any point $x \in D_\sigma( \cpb; \cqb)$,
$$T_x D_\sigma( \cpb; \cqb) \supseteq \ker \delta_x (\cH; \cpb, \cqb).$$
\item Moreover, if $x \in \widetilde{D}_\sigma(\cpb; \cqb)$, then equality holds.
}
%$$T_x S_\sigma( \cP^\bu_i, \cqb) = \ker \delta_x (\cH; \cP^\bu_i, \cqb).$$
\end{lemma}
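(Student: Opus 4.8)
The plan is to reduce to the local model of Remark \ref{rem:dsFromSchubert}, where $D_\sigma(\cpb;\cqb)$ is the preimage $p^{-1}(X_\sigma)$ under a morphism $p\colon S \to \Fl(d)$ coming from a completion of $\cpb$ trivialized so that $\cqb$ is constant. First I would observe that $\ker \delta_x$ is, by the very construction in \eqref{eq:deltax}, the preimage under $dp_x\colon T_xS \to T_{p(x)}\Fl(d)$ of the subspace of $T_{p(x)}\Fl(d) = \End H/\Fix P^\bu$ corresponding to deformations of $P^\bu$ fixing $Q^\bu$ — and this latter subspace is exactly the tangent cone/tangent space direction into $X_\sigma$ at $p(x)$ that is always available (it is $\Fix Q^\bu/(\Fix P^\bu \cap \Fix Q^\bu)$, the tangent space to the $\GL_d$-orbit, equivalently the Borel orbit, of $[P^\bu]$ inside $X_\sigma$). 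So the two parts of the lemma will follow from the corresponding statements about the Schubert variety $X_\sigma \subseteq \Fl(d)$: namely (a) $T_{q}X_\sigma \supseteq T_q(B\!\cdot\! q)$ for every $q \in X_\sigma$, and (b) equality $T_qX_\sigma = T_q(B\!\cdot\! q)$ when $q$ lies in the open cell $\widetilde X_\sigma$, together with the fact that $p^{-1}$ of a subscheme has tangent space $dp_x^{-1}$ of the tangent space when — and this is the subtlety — $p$ is suitably transverse. But in fact one does not need transversality of $p$ for the containment in (1): for any morphism $p$ and any subscheme $Z$, one always has $T_x(p^{-1}Z) \supseteq dp_x^{-1}(T_{p(x)}Z)$, since a tangent vector mapping into $T_{p(x)}Z$ pulls back a local equation of $Z$ to something vanishing to order $\geq 2$. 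Thus part (1) is essentially formal once the identification of $\ker\delta_x$ with $dp_x^{-1}$ of the orbit tangent space is in hand, using that $T_q(B\cdot q) \subseteq T_qX_\sigma$.

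For part (2), I would argue as follows. On the open locus $\widetilde D_\sigma$, the flags have associated permutation exactly $\sigma$ everywhere, so $\widetilde D_\sigma = p^{-1}(\widetilde X_\sigma)$ and $\widetilde X_\sigma = B \cdot q$ is a single smooth $B$-orbit (the open Schubert cell), of codimension $\inv(\omega\sigma)$ in $\Fl(d)$. Now I claim $p$ is transverse to this orbit along $\widetilde D_\sigma$. The cleanest way: $\widetilde X_\sigma$ is cut out near $q$ by the vanishing of the rank-function coordinates that must increase, and these functions have independent differentials whose common kernel is precisely $T_q(B\cdot q)$; the composite of $dp_x$ with the quotient $T_{p(x)}\Fl(d) \to T_{p(x)}\Fl(d)/T_q(B\cdot q)$ is, after unwinding definitions, exactly the map whose kernel defines $\ker\delta_x$ composed with an inclusion — and surjectivity of that composite is NOT automatic. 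So I would instead take the more robust route: show directly that $T_x D_\sigma \subseteq \ker\delta_x$ on $\widetilde D_\sigma$ by a dimension count combined with part (1). Concretely, I know from part (1) that $\dim T_x D_\sigma \geq \dim \ker\delta_x = \dim T_xS - \mathrm{rank}(\delta_x) \geq \dim T_xS - \codim_{\Fl(d)}(\widetilde X_\sigma) = \dim T_xS - \inv(\omega\sigma)$; if $x$ is a smooth point of $D_\sigma$ with $\dim_x D_\sigma = \dim S - \inv(\omega\sigma)$ then all inequalities are equalities and we are done. So the real content of (2) is: (i) $D_\sigma$ has codimension exactly $\inv(\omega\sigma)$ at points of $\widetilde D_\sigma$, and (ii) $\widetilde D_\sigma$ is smooth. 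Both follow from versality: over $\widetilde D_\sigma$ the morphism $p$ hits the smooth stratum $\widetilde X_\sigma$, and one can use the versality hypothesis to guarantee $p$ is transverse to $\widetilde X_\sigma$ there, so $\widetilde D_\sigma = p^{-1}(\widetilde X_\sigma)$ is smooth of the expected codimension.

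I expect the main obstacle to be pinning down exactly what hypotheses Lemma \ref{lem:tspaceSSigma} is assuming about $S$ and the flags. As stated in the excerpt the lemma does not explicitly assume versality — so part (2) presumably must hold for an arbitrary scheme $S$ and arbitrary complete flags, at least at a point $x \in \widetilde D_\sigma$. In that generality the dimension-count argument above breaks, because $D_\sigma$ need not have the expected codimension and $\widetilde D_\sigma$ need not be smooth. The correct argument at that level of generality must be purely infinitesimal: one shows that the tangent space to the determinantal scheme $D_\sigma$ at a point $x$ where the rank is exactly $r^\sigma(a,b)$ (i.e. on the open stratum) is cut out inside $T_xS$ by the linearizations of the determinantal equations, and that these linearized conditions are exactly the condition $\delta_x(v) = 0$. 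This is the standard computation that at a point of a determinantal variety where the defining matrix has exactly the critical rank, the Zariski tangent space is the kernel of the derivative of the relevant block of the matrix; the bookkeeping translating "derivative of the rank-$\leq r$ conditions for $\cP^a \to \cH/\cQ^b$" into "the component of $\delta_x$ in the $(a,b)$ essential position vanishes" is the part that will require care, but it is routine linear algebra once set up, using Fact \ref{fact:essSet} to restrict to essential positions.
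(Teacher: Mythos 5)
Your overall reduction is the right one and is essentially a variant of the paper's: the paper chooses a section of the frame bundle and works with a map $S\to\Fl(d)^2$ (both flags varying), identifying $\ker\delta_x$ as the preimage of $\operatorname{im}\Delta$ and $\widetilde D_\sigma(\cpb;\cqb)$ as the preimage of the smooth $\GL_d$-orbit; you instead trivialize so that $\cqb$ is constant and use a single map $p\colon S\to\Fl(d)$, with $\ker\delta_x=(dp_x)^{-1}T_{[P^\bu]}(B\cdot[P^\bu])$ and $D_\sigma=p^{-1}(X_\sigma)$. Both models are valid and give morally the same proof. One nice move in the paper that you could borrow: part (1) is deduced from part (2) by noting that any $x\in D_\sigma$ lies in $\widetilde D_{\sigma'}$ for some $\sigma'\le\sigma$ in Bruhat order, and $D_{\sigma'}\subseteq D_\sigma$, so $T_xD_\sigma\supseteq T_xD_{\sigma'}=\ker\delta_x$; you prove (1) directly, which also works.

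The genuine gap is in your treatment of part (2). You only claim $T_x(p^{-1}Z)\supseteq(dp_x)^{-1}T_{p(x)}Z$, and then worry (correctly, given that weaker statement) that you cannot close the argument without transversality of $p$, leading you down a dimension-counting detour that you yourself recognize requires versality, a hypothesis the lemma does not have. But in fact, for the \emph{scheme-theoretic} preimage $p^{-1}Z=V(p^{\#}I_Z\cdot\cO_S)$, the containment is always an equality,
\[
T_x\bigl(p^{-1}Z\bigr)=(dp_x)^{-1}T_{p(x)}Z,
\]
with no transversality assumption whatsoever: a vector $v\in T_xS$ lies in $T_x(p^{-1}Z)$ iff it kills $d(p^*g)_x=dg_{p(x)}\circ dp_x$ for all $g\in I_Z$, which is precisely the condition $dp_x(v)\in T_{p(x)}Z$. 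With this, part (2) is immediate and needs no versality: since $p(x)\in\widetilde X_\sigma$ and $\widetilde X_\sigma$ is an \emph{open} subscheme of $X_\sigma$ equal to a single smooth $B$-orbit, one has $T_{p(x)}X_\sigma=T_{p(x)}\widetilde X_\sigma=T_{p(x)}(B\cdot p(x))$, hence $T_xD_\sigma=(dp_x)^{-1}T_{p(x)}(B\cdot p(x))=\ker\delta_x$. Your closing gesture toward a ``purely infinitesimal'' determinantal computation is pointing at exactly this, but you should recognize it is a one-line fact about tangent spaces of scheme-theoretic fiber products rather than a bookkeeping exercise to be set up later; without noticing it, the proof as written is incomplete.
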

\begin{proof}
Start by verifying (2) in the 
special case that $H=k^d$ and $S = \Fl(H)^2$ with $\cpb,\cqb$ the tautological flag bundles.  
%$T_x S_\sigma( \cvb_1, \cvb_2) = \ker \delta_x (\cH; \cvb_1, \cvb_2)$
For any $ x = (\pb,\qb) \in \widetilde{D}_\sigma( \cpb; \cqb)$, 
the scheme $\widetilde{D}_\sigma( \cpb; \cqb)$ is equal to the $\GL_d$-orbit of $x$, i.e. the scheme-theoretic image of the map $ \GL_d \to \Fl(d)^2$ taking $\mathbf{1}$ to $x$.  The differential of this map is the diagonal map $\Delta\colon \End H\to \End H/\Fix \pb \times \End H /\Fix \qb,$
and $T_x D_\sigma( \cpb; \cqb) = T_x \widetilde{D}_\sigma( \cpb; \cqb)= \on{im} \Delta = \ker \delta_x$ as desired.

Now the general case of (2) follows by pulling back: shrink $S$ around $x$ so that we may assume $\cH$ is trivial, and then choose a section $s\colon S\to \Fr(H)$; we get a composite map $S\to\Fr(H)\to\Fl(d)^2$ taking $x$ to $y=(\pb,\qb)$, say.  Under the differential
$ T_x S \to T_y \Fl(d)^2,$
$\ker \delta_x$ is the preimage of $\ker \delta_y = \on{im}\Delta$, by definition of $\delta_x$. (Note this does not depend on the choice of section, as verified in the definition of $\delta_x$).  And $\widetilde{D}_\sigma (\cpb;\cqb)$ is the inverse image of $\widetilde{D}_\sigma(\cvb_1;\cvb_2)\subset \Fl(d)^2$.  This verifies (2) in general.

Now (1) follows from (2) by observing that if $x \in D_\sigma(\cpb;\cqb)$, then $x \in \widetilde{D}_{\sigma'} (\cpb;\cqb)$ for some 
%$\sigma' \geq \sigma$ 
$\sigma' \leq \sigma$ 
in Bruhat order, so since $D_{\sigma'}(\cpb;\cqb)$ is a subscheme of $D_\sigma(\cpb;\cqb)$,
$$T_x D_\sigma(\cpb; \cqb) \supseteq T_x D_{\sigma'} (\cpb;\cqb) = \ker \delta_x(\cH; \cpb, \cqb).$$
\end{proof}

\begin{lemma} \label{lem:versalityCriterion2}
If $x \in \widetilde{D}_\sigma(\cpb;\cqb)$, where $\cpb,\cqb$ are complete flags, then the following are equivalent.
\begin{enumerate}
\item The pair $(\cP^\bu, \cQ^\bu)$ is versal at $x$.
\item The point $x$ is a smooth point of $S$ and $\delta_x$ is surjective.
\item The point $x$ is a smooth point of both $S$ and $D_\sigma(\cP^\bu;\cQ^\bu)$ and the local codimension of $D_\sigma(\cP^\bu;\cQ^\bu)$ in $S$ is equal to $\inv(\omega \sigma)$.
\end{enumerate}
\end{lemma}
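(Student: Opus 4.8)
The plan is to prove the cycle of implications (1) $\Rightarrow$ (2) $\Rightarrow$ (3) $\Rightarrow$ (1), using Proposition \ref{def:versalityCriterion} to dispatch the equivalence of (1) and (2) for free, so that the real content is the equivalence with (3). Note first that (1) $\Leftrightarrow$ (2) is exactly Proposition \ref{def:versalityCriterion}, applied to the pair of flags $(\cP^\bu, \cQ^\bu)$; so it suffices to relate (2) and (3).

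For (2) $\Rightarrow$ (3), assume $x$ is a smooth point of $S$ and $\delta_x$ is surjective. By Lemma \ref{lem:tspaceSSigma}(2), since $x \in \widetilde{D}_\sigma(\cpb;\cqb)$ we have $T_x D_\sigma(\cpb;\cqb) = \ker \delta_x$, so $\dim T_x D_\sigma(\cpb;\cqb) = \dim T_x S - \dim M(\cH_x; P^\bu_x, Q^\bu_x)$ by surjectivity of $\delta_x$. Now I would compute $\dim M$: since $M = \mathrm{coker}(\End H \xrightarrow{\Delta} \End H/\Fix \pb \times \End H/\Fix \qb)$ and $\ker \Delta = \Fix \pb \cap \Fix \qb$, we get $\dim M = 2\binom{d}{2} - (d^2 - \dim(\Fix\pb \cap \Fix\qb)) = \dim \End H - \dim(\Fix \pb + \Fix \qb) = \inv(\omega\sigma)$ by Fact \ref{fact:invFix} (using that $x \in \widetilde D_\sigma$, so $\sigma$ really is the associated permutation of $P^\bu_x, Q^\bu_x$). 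Hence $\dim T_x D_\sigma = \dim_x S - \inv(\omega\sigma) = \dim \Fl(d)$-free. Meanwhile, locally near $x$ we may realize $D_\sigma(\cpb;\cqb) = p^{-1}(X_\sigma)$ for a morphism $p\colon S \to \Fl(d)$ as in Remark \ref{rem:dsFromSchubert}; since $X_\sigma$ has codimension $\inv(\omega\sigma)$ in $\Fl(d)$ and $S$ is smooth of some dimension $n$ at $x$, every component of $D_\sigma$ through $x$ has codimension at most $\inv(\omega\sigma)$ (e.g.\ by Fact \ref{fact:fultonDegen}, as $S$ is Cohen-Macaulay and pure dimensional near $x$). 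Combining the tangent space computation with this codimension bound forces $D_\sigma$ to be smooth at $x$ of codimension exactly $\inv(\omega\sigma)$: the tangent space has codimension $\inv(\omega\sigma)$, so the local ring, whose dimension is at least $n - \inv(\omega\sigma)$, is regular.

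For (3) $\Rightarrow$ (2): assume $x$ is a smooth point of $S$ (so $\dim_x S = n$, say) and a smooth point of $D_\sigma(\cpb;\cqb)$ with local codimension $\inv(\omega\sigma)$. Then $\dim T_x D_\sigma = n - \inv(\omega\sigma)$. By Lemma \ref{lem:tspaceSSigma}(2), $T_x D_\sigma = \ker \delta_x$, so $\dim \ker \delta_x = n - \inv(\omega\sigma)$, whence $\mathrm{rank}\,\delta_x = \inv(\omega\sigma)$. But the target of $\delta_x$ is $M(\cH_x; P^\bu_x, Q^\bu_x)$, which as computed above has dimension exactly $\inv(\omega\sigma)$ since $x \in \widetilde D_\sigma$. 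Therefore $\delta_x$ is surjective, giving (2).

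The main obstacle is the tangent-space bookkeeping in (2) $\Rightarrow$ (3): one must be careful that ``smooth point of $D_\sigma$ of the expected codimension'' really does follow from ``tangent space has the expected dimension,'' which requires knowing a priori (from Fulton's bound, Fact \ref{fact:fultonDegen}, applied to $S$ smooth hence Cohen-Macaulay near $x$) that no component of $D_\sigma$ through $x$ has codimension exceeding $\inv(\omega\sigma)$ — so that the local ring at $x$ has dimension at least $n - \inv(\omega\sigma)$, matching the tangent space dimension and forcing regularity. The computation $\dim M = \inv(\omega\sigma)$ via Fact \ref{fact:invFix} is the other ingredient that makes all three conditions line up numerically; everything else is the formal consequence of Lemma \ref{lem:tspaceSSigma} and Proposition \ref{def:versalityCriterion}.
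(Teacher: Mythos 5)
Your proof is correct and follows essentially the same route as the paper's: the equivalence (1) $\Leftrightarrow$ (2) is Proposition \ref{def:versalityCriterion}, the identity $\dim M = \inv(\omega\sigma)$ comes from Fact \ref{fact:invFix} (the paper phrases this as $M \cong \End H/(\Fix P^\bu + \Fix Q^\bu)$, which is what your dimension count reproduces), the tangent-space identification is Lemma \ref{lem:tspaceSSigma}(2), and the codimension lower bound is Remark \ref{rem:dsFromSchubert}. The paper packages (2) $\Leftrightarrow$ (3) as a single chain of inequalities $\dim_x S - \inv(\omega\sigma) \le \dim_x D_\sigma \le \dim T_x D_\sigma = \dim_x S - \inv(\omega\sigma) + \dim\mathrm{coker}\,\delta_x$ whose simultaneous equality is equivalent to both surjectivity of $\delta_x$ and the dimension/smoothness condition, whereas you argue the two implications separately, but the underlying bookkeeping is identical; note the stray fragment ``$= \dim\Fl(d)$-free'' in your (2) $\Rightarrow$ (3) paragraph, which appears to be a typo.
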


\begin{proof}
We first observe that in the two-flag case, we have the following isomorphism.
$$M(H_x; P^\bu_x, Q^\bu_x) \cong \End H_x / \left( \Fix P^\bu_x + \Fix Q^\bu_x \right)$$
Together with Fact \ref{fact:invFix} and the assumption that $\sigma$ is the permutation associated to $P_x^\bu,Q^\bu_x$, this implies that $\dim M(H_x; P^\bu_x, Q^\bu_x) = \inv(\omega \sigma)$. By Lemma \ref{lem:tspaceSSigma}, we deduce that 
$$ \dim T_x D_\sigma(\cP^\bu; \cQ^\bu) = \dim T_x S - \inv(\omega \sigma) + \dim \mathrm{coker} \delta_x.$$

We now prove the Lemma. The equivalence of (1) and (2) is part of Proposition~\ref{def:versalityCriterion}, so it suffices to prove that (2) is equivalent to (3). Assume that $x$ is a smooth point of $S$ (since this is a hypothesis of both statements). Observe that
$$
\dim_x S - \inv(\omega \sigma) \leq \dim_x D_\sigma(\cP^\bu; \cQ^\bu) \leq \dim T_x D_\sigma(\cP^\bu; \cQ^\bu) = \dim_x S - \inv(\omega \sigma) + \dim \mathrm{coker} \delta_x.
$$
The first inequality follows from the local description of $D_\sigma(\cP^\bu, \cQ^\bu)$ as the inverse image of a Schubert variety (Remark \ref{rem:dsFromSchubert}). Now, $\delta_x$ is surjective if and only if $\dim \mathrm{coker} \delta_x = 0$, which holds if and only if both inequalities above hold with equality. This in turn is equivalent to $D_\sigma(\cP^\bu; \cQ^\bu)$ having local codimension $\inv(\omega \sigma)$ (first inequality) and $x$ being a smooth point of it (second inequality). So indeed (2) is equivalent to (3).
\end{proof}

\begin{example}
A simple example of all conditions in Lemma~\ref{lem:versalityCriterion2} being satisfied is two points in $\mathbb{P}^1$ moving above a smooth $1$-parameter base $S$, which come together  over a reduced point $x$ of $S$.
\end{example}

\section{A Knutson-Woo-Yong theorem for degeneracy loci of versal flags} \label{sec:KWY}

We turn our attention to intersections of degeneracy loci defined with respect to versal flags. We show that the singularities of these loci are completely controlled by the singularities of the individual degeneracy loci, and in turn by Schubert varieties. More precisely, we prove the following analog of the main theorem of \cite{knutson-woo-yong-singularities}. The results of \cite{knutson-woo-yong-singularities} concern general Schubert varieties, whereas we are concerned only with Schubert varieties of flag varieties. In the flag variety case, our result provides a generalization to $\ell$-fold intersections, as well as to the relative setting.

\begin{thm} \label{thm:kwy}
Let $P$ be an \'etale-local property of finite-type $k$-schemes that is preserved by products with affine space. Suppose that there is an integer $\ell$ and a function $f_{P,\ell}$ such that for any finite-type $k$-schemes $X_1, \cdots, X_\ell$ and point $x \in \prod X_i$,
$$P\left( x, \prod X_i \right) = f_{P,\ell}\left( P(\pi_1(x),X_1), \cdots P(\pi_\ell(x), X_\ell \right)).$$
Let $\cvb, \cpb_1, \cpb_2, \cdots, \cpb_\ell$ be a versal $(\ell+1)$-tuple of flags in a rank-$d$ vector bundle $\cH$ on a smooth variety $S$, $\sigma_1,\cdots,\sigma_\ell \in S_d$, and $x$ a point in $D_{\sigma_1,\cdots,\sigma_\ell}(\cvb; \cpb_1,\cdots,\cpb_\ell)$. Then
$$P(x,D_{\sigma_1,\cdots,\sigma_\ell}(\cvb; \cpb_1,\cdots,\cpb_\ell)) = f_{P,\ell}( P(x, D_{\sigma_1}(\cvb; \cpb_1)), \cdots, P(x, D_{\sigma_\ell}(\cvb; \cpb_\ell))).$$
\end{thm}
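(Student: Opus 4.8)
The plan is to reduce, étale-locally around the point $x$, to the situation of an intersection of ordinary Schubert varieties pulled back along a smooth morphism, and then apply the hypothesis on $P$ together with the behavior of $P$ under smooth pullback. First I would use Remark~\ref{rem:dsFromSchubert}: shrinking $S$ to an affine open $U$ on which $\cH$ is trivial, choose completions of the $\cpb_i$ and a trivialization of $\cH$ that makes the completion of, say, $\cpb_1$ constant. More usefully, since the whole $(\ell+1)$-tuple $\cvb,\cpb_1,\ldots,\cpb_\ell$ is versal, the induced morphism $p\colon \Fr(\cH)|_U \to \Fl(d)^{\ell+1}$ is smooth, and descending along the $\GL_d$-torsor $\Fr(\cH)|_U\to U$ gives a smooth morphism from $U$ to the quotient stack $[\Fl(d)^{\ell+1}/\GL_d]$; equivalently, after a local section $s\colon U\to\Fr(\cH)$, a smooth morphism $q\colon U\to \Fl(d)^{\ell+1}$ (or rather to $\Fl(d)^\ell$ once we normalize one flag to be constant). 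Under $q$, each $D_{\sigma_i}(\cvb;\cpb_i)$ is the preimage $q^{-1}(\mathrm{pr}_i^{-1}(X_{\sigma_i}))$ of a relative Schubert variety, so the intersection $D_{\sigma_1,\ldots,\sigma_\ell}$ is the preimage of $Y := \bigcap_i \mathrm{pr}_i^{-1}(X_{\sigma_i}) \subseteq \Fl(d)^{\ell+1}$, an intersection of $\ell$ "Schubert conditions" in a product of flag varieties.

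Next I would analyze $Y$ directly. Fixing the varying flag $V^\bu$ and viewing $X_{\sigma_i}(V^\bu)\subseteq\Fl(d)$ as the Schubert variety in the $i$-th factor $\Fl(d)$ with respect to the (now movable) flag $V^\bu$, the point is that $Y\to \Fl(d)$ (projection to the $V^\bu$-factor) is a fiber bundle: over each complete flag $V^\bu$ the fiber is $\prod_i X_{\sigma_i}(V^\bu)$, and by homogeneity of $\Fl(d)$ under $\GL_d$ all these fibers are isomorphic (translate $V^\bu$ to a fixed reference flag). More precisely, $Y$ is a $\GL_d$-equivariant fiber bundle over $\Fl(d)$ with fiber $\prod_i X_{\sigma_i}$, so étale-locally on the base it looks like $\Fl(d)\times \prod_i X_{\sigma_i}$ — in fact, trivializing the $\GL_d$-torsor $\GL_d\to\Fl(d)$ (it is Zariski-locally trivial since $\Fl(d)=\GL_d/B$ and $B$ is special, or at worst étale-locally trivial) exhibits a neighborhood of any point of $Y$ as an open subset of $(\text{affine space})\times \prod_i X_{\sigma_i}$. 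So for the image point $y=q(x)$, since $P$ is étale-local and invariant under products with affine space, $P(y,Y)=P\bigl((\text{pt},y_1,\ldots,y_\ell),\ \prod_i X_{\sigma_i}\bigr)$ where $y_i$ is the image of $y$ in the $i$-th factor; applying the given multiplicativity hypothesis, $P(y,Y)=f_{P,\ell}(P(y_1,X_{\sigma_1}),\ldots,P(y_\ell,X_{\sigma_\ell}))$. The same fiber-bundle argument applied to a single $\sigma_i$ gives $P(y,\mathrm{pr}_i^{-1}(X_{\sigma_i}))=P(y_i,X_{\sigma_i})$.

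Finally I would transfer all of this back down to $S$ via the smooth morphism $q$. A smooth morphism is étale-locally a projection from affine space (by the local structure of smooth morphisms), so for each of the finitely many schemes $D_{\sigma_1,\ldots,\sigma_\ell}(\cvb;\cpb_\bu)=q^{-1}(Y)$ and $D_{\sigma_i}(\cvb;\cpb_i)=q^{-1}(\mathrm{pr}_i^{-1}X_{\sigma_i})$, the point $x$ has an étale neighborhood in which the scheme is (étale-locally) $q^{-1}(Z)\cong Z\times\mathbb A^m$ near the relevant point — using that the fibers of $q$ are smooth, hence étale-locally affine space, and $P$ is preserved by products with affine space. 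Since $P$ is étale-local, $P(x,q^{-1}(Z))=P(q(x),Z)$ for each such $Z$. Combining: $P(x,D_{\sigma_1,\ldots,\sigma_\ell})=P(y,Y)=f_{P,\ell}(P(y_1,X_{\sigma_1}),\ldots)=f_{P,\ell}(P(y,\mathrm{pr}_1^{-1}X_{\sigma_1}),\ldots)=f_{P,\ell}(P(x,D_{\sigma_1}),\ldots,P(x,D_{\sigma_\ell}))$, as claimed.

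The main obstacle I expect is making the "étale-locally a product with affine space" steps genuinely rigorous and uniform: a smooth morphism need not be Zariski-locally a projection, only étale-locally so, and one must check that "$P$ étale-local plus $P$ preserved by $\times\mathbb A^m$" really does imply $P$ is invariant under arbitrary smooth base change of the ambient scheme at a point — i.e. that $P(x,W)=P(x',W')$ whenever there is a smooth morphism $W\to W'$ with $x\mapsto x'$. This is essentially the content of the argument, and it must be applied simultaneously and compatibly to the $\ell+1$ schemes $q^{-1}(Y)$ and $q^{-1}(\mathrm{pr}_i^{-1}X_{\sigma_i})$ (all living over the same base $S$ via the same $q$), as well as to the $\GL_d$-torsor trivialization $\GL_d\to\Fl(d)$; keeping all these étale-local identifications consistent at the single point $x$ (respectively $y$) is the delicate bookkeeping. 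The versality hypothesis is exactly what guarantees smoothness of $q$, so it feeds in precisely at this point.
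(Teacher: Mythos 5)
Your overall plan is the right one and captures the same idea as the paper (normalize away one flag by a change of frame, reduce to a product of fixed Schubert varieties, and transfer the property $P$ along smooth morphisms using ``\'etale-local $+$ preserved by $\times\mathbb{A}^n$''). The paper formalizes this transfer in a lemma about ``equivalence via smooth morphisms'' (two schemes related by a roof $Z\to X$, $Z\to Y$ of smooth maps) and then builds exactly the roof you are implicitly describing; and your analysis of $Y=\bigcap_i \mathrm{pr}_i^{-1}(X_{\sigma_i})\subseteq \Fl(d)^{\ell+1}$ as a $\GL_d$-equivariant fiber bundle over the first $\Fl(d)$-factor with fiber $\prod_i X_{\sigma_i}$ is a perfectly good alternative to the second smooth morphism the paper constructs for the same purpose.

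However, there is one genuine gap in the step that makes this rigorous. You write that after choosing a local section $s\colon U\to\Fr(\cH)$ you obtain ``a smooth morphism $q\colon U\to\Fl(d)^{\ell+1}$ (or rather to $\Fl(d)^\ell$ once we normalize one flag to be constant)'' and then transfer $P$-values along $q$. This $q$ is in general \emph{not} smooth, and the rest of your argument rests on its smoothness. The map $p\colon\Fr(\cH)\to\Fl(d)^{\ell+1}$ is smooth by versality, but a section $s$ of the $\GL_d$-torsor is a closed immersion, and $p\circ s$ need not be smooth; it cannot even be smooth on dimension grounds. For example, with $S=\Spec k$, $d=2$, $\ell=2$, three distinct flags in $k^2$ are versal, but there is no smooth morphism $\Spec k\to\Fl(2)^3$. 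More generally, versality only requires $\dim S + d^2\geq (\ell+1)\binom{d}{2}$, which does not give $\dim S\geq (\ell+1)\binom{d}{2}$ (or $\geq\ell\binom{d}{2}$ after normalizing one flag).

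The fix is exactly what you gesture at in your final paragraph (``one must check that \ldots $P(x,W)=P(x',W')$ whenever there is a smooth morphism $W\to W'$''): you must carry the frame bundle with you rather than sectioning it, i.e.\ use a roof $S\xleftarrow{\ \pi\ }\Fr(\cH)\xrightarrow{\ p\ }\Fl(d)^{\ell+1}$ of smooth maps (and then chain with a second roof coming from trivializing one of the factors of $\Fl(d)^{\ell+1}$) and transfer $P$ across each leg. The transfer along a single smooth morphism $W\to W'$ with $w\mapsto w'$ does give $P(w,W)=P(w',W')$ by the local structure of smooth morphisms and the hypotheses on $P$, but this applies leg by leg along the roof, not to a composite $U\to\Fl(d)^{\ell+1}$ that does not exist as a smooth map. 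Once the roof replaces the nonexistent $q$, your bookkeeping goes through, and the rest of your argument (reduction of $Y$ to a product of Schubert varieties, application of $f_{P,\ell}$) is correct.
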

\noindent Throughout this section, assume that we have fixed a property $P$, denoted $P(x,X)$ for a point $x\in X$, and a function $f_{P,\ell}$ satisfying the hypotheses of Theorem \ref{thm:kwy}.

\begin{defn} \label{def:eqViaSmooth}
Let $X,Y$ be schemes with points $x \in X, y \in Y$. Let $\cH,\mathcal{J}$ be rank-$d$ vector bundles on $X,Y$ respectively, let $\cpb_1, \cdots, \cpb_\ell$ be flags in $\cH$ and let $\cqb_1,\cdots, \cqb_\ell$ be flags in $\mathcal{J}$.

We say that $(x,\cpb_1\,\cdots,\cpb_\ell)$ is \emph{equivalent via smooth morphisms} to $(y, \cqb_1,\cdots,\cqb_\ell)$ if there is a scheme $Z$ with rank-$d$ vector bundle $\mathcal{K}$, two smooth morphisms $\pi: Z \rightarrow X, \rho: Z \rightarrow Y$ and a point $z \in Z$ such that $\pi(z) = x, \rho(z) = y$, $\mathcal{K} \cong \pi^\ast \cH \cong \rho^\ast \mathcal{J}$, and such that upon identifying both pullbacks with $\mathcal{K}$, we have $\pi^\ast \cpb_i = \rho^\ast \cqb_i$ for all $i$.
\end{defn}

Equivalence via smooth morphisms is an equivalence relation. Reflexivity and symmetry are clear, and transitivity follows from standard facts about fiber products of smooth morphisms.

\begin{lemma} \label{lem:PviaSmooth}
If $(x, \cvb, \cpb_1, \cdots, \cpb_\ell)$ is equivalent via smooth morphisms to $(y, \cwb, \cqb_1, \cdots, \cqb_\ell)$, and $\sigma_1, \cdots, \sigma_\ell$ are permutations such that 
$x \in D_{\sigma_1,\cdots,\sigma_\ell}(\cvb; \cpb_1, \cdots, \cpb_\ell)$,
then 
$$y \in D_{\sigma_1,\cdots,\sigma_\ell}(\cwb; \cqb_1, \cdots, \cqb_\ell)$$
and
$$P(x, D_{\sigma_1,\cdots,\sigma_\ell}(\cvb; \cpb_1, \cdots, \cpb_\ell)) = P(y, D_{\sigma_1,\cdots,\sigma_\ell}(\cwb; \cqb_1, \cdots, \cqb_\ell)).$$
Furthermore, the codimension at $y$ of $D_{\sigma_1,\cdots,\sigma_\ell}(\cwb; \cqb_1, \cdots, \cqb_\ell)$ in $Y$ is equal to the codimension at $x$ of $D_{\sigma_1,\cdots,\sigma_\ell}(\cvb; \cpb_1, \cdots, \cpb_\ell)$ in $X$.
\end{lemma}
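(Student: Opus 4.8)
The plan is to exploit the fact that degeneracy loci are compatible with pullback along arbitrary morphisms, together with the hypotheses on $P$. Let $Z$, $\mathcal{K}$, $\pi\colon Z\to X$, $\rho\colon Z\to Y$, and $z\in Z$ witness the equivalence via smooth morphisms, so that $\pi^\ast\cvb=\rho^\ast\cwb$ and $\pi^\ast\cpb_i=\rho^\ast\cqb_i$ after identifying both pullbacks of the ambient bundle with $\mathcal{K}$. First I would observe that degeneracy loci are defined by rank conditions on bundle maps, and these conditions pull back: for any morphism $f\colon Z\to X$ one has, scheme-theoretically,
$$f^{-1}\bigl(D_{\sigma_1,\cdots,\sigma_\ell}(\cvb;\cpb_1,\cdots,\cpb_\ell)\bigr)=D_{\sigma_1,\cdots,\sigma_\ell}(f^\ast\cvb; f^\ast\cpb_1,\cdots,f^\ast\cpb_\ell).$$
Applying this to both $\pi$ and $\rho$ and using the identifications of the pulled-back flags, I get that $\pi^{-1}$ and $\rho^{-1}$ of the two degeneracy loci in question coincide as subschemes of $Z$; call this common subscheme $D_Z$. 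In particular $z\in D_Z$, so $\pi(z)=x$ lies in the first locus (given) and $\rho(z)=y$ lies in the second, proving the first claim.

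Next I would transfer the property $P$ across each smooth morphism separately. Since $\pi\colon Z\to X$ is smooth, the restricted morphism $D_Z\to D_{\sigma_1,\cdots,\sigma_\ell}(\cvb;\cpb_1,\cdots,\cpb_\ell)$ is smooth as well (base change of $\pi$), and similarly for $\rho$. A smooth morphism is étale-locally a projection from an affine-space bundle, i.e.\ étale-locally of the form $U\times\mathbb{A}^N\to U$; since $P$ is an étale-local property preserved by products with affine space, it follows that
$$P(z, D_Z)=P(x, D_{\sigma_1,\cdots,\sigma_\ell}(\cvb;\cpb_1,\cdots,\cpb_\ell))\quad\text{and}\quad P(z,D_Z)=P(y, D_{\sigma_1,\cdots,\sigma_\ell}(\cwb;\cqb_1,\cdots,\cqb_\ell)),$$
which chains together to give the desired equality of $P$-values. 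The same local description of $\pi$ and $\rho$ as affine-space bundles (étale-locally) shows that the codimension is preserved: codimension is unchanged by smooth base change, so $\operatorname{codim}_z D_Z = \operatorname{codim}_x D_{\sigma_1,\cdots,\sigma_\ell}(\cvb;\cdots)$ and likewise on the $Y$ side, giving the final assertion.

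The main obstacle I anticipate is making the pullback compatibility of degeneracy loci precise at the scheme level rather than merely set-theoretically: one must check that the determinantal scheme structure (defined locally by vanishing of minors of a presenting matrix, as recalled in \S\ref{ssec:degen}) is genuinely compatible with pullback, and that choosing local trivializations and completions of the flags on $X$ (or $Y$) pulls back to compatible local data on $Z$ under the identification $\pi^\ast\cvb=\rho^\ast\cwb$, etc. Once that bookkeeping is in place, the rest is a formal application of the hypotheses on $P$ together with the stability of smoothness and codimension under base change; there is no further geometric input required, and in particular the function $f_{P,\ell}$ plays no role here—it will enter only when this lemma is combined with the product structure in the proof of Theorem~\ref{thm:kwy}.
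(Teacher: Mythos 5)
Your proposal matches the paper's proof essentially verbatim: pull back both degeneracy loci to $Z$ and observe they coincide as a subscheme $D_Z$, then transfer $P$ and codimension across each smooth morphism using the local structure theorem for smooth morphisms (an affine neighborhood of $z$ in $D_Z$ admits an \'etale map to $\mathbb{A}^n_V$ factoring the projection to a neighborhood $V$ of $x$ in $D_X$, which is exactly \cite[Tag 039P]{stacks-project}) together with the hypotheses that $P$ is \'etale-local and stable under products with affine space. One tiny imprecision: a smooth morphism is not \'etale-locally literally of the form $U\times\mathbb{A}^N\to U$; rather it \'etale-locally factors through a projection $\mathbb{A}^N_V\to V$ via an \'etale map, but this is exactly what your argument needs and does not affect the validity of the reasoning.
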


\begin{proof}
Let $\pi: Z \rightarrow X$, $\rho\colon Z\to Y$, and $z \in Z$ be as in Definition \ref{def:eqViaSmooth}. For simplicity, abbreviate $D_{\sigma_1,\cdots,\sigma_\ell}(\cvb; \cpb_1,\cdots,\cpb_\ell)$ by $D_X$, $D_{\sigma_1,\cdots,\sigma_\ell}(\cwb; \cqb_1,\cdots,\cqb_\ell)$ by $D_Y$, and $D_{\sigma_1,\cdots,\sigma_\ell}(\pi^\ast \cvb; \pi^\ast \cpb_1,\cdots,\pi^\ast \cpb_\ell)$ by $D_Z$. Observe that $D_Z = \pi^{-1} (D_X) = \rho^{-1}(D_Y)$. Since $x \in D_X$, it follows that $z \in D_Z$ and $y \in D_Y$.

The restriction $D_Z \rightarrow D_X$ of $\pi$ is smooth. Let $n$ be the relative dimension of this morphism at $z$. Then $n$ is also the relative dimension of $\pi$ at $z$; it follows that the codimension of $D_Z$ in $Z$ is equal to the codimension of $D_X$ at $x$. 

There exist affine neighborhoods $z \in U \subseteq D_Z$ and $x \in V \subseteq D_X$ and an \'etale morphism $e: U \rightarrow \mathbb{A}^n_V$ such that $U \rightarrow V$ factors through the projection $\mathbb{A}^n_V \rightarrow V$ \cite[{Tag 039P}]{stacks-project}. Since the property $P$ is \'etale-local and unaffected by products with affine space, it follows that $P(z,D_Z) = P(e(z), \mathbb{A}^n_V) = P(x, D_X)$. 

Applying the same logic to $\rho$, it follows that the codimension of $D_Y$ in $Y$ is equal to the codimension of $D_Z$ in $Z$, and $P(z,D_Z) = P(y,D_Y)$. The result follows.
\end{proof}

\begin{proof}[Proof of Theorem \ref{thm:kwy}]
The frame bundle $\Fr(\cH) \rightarrow S$ is surjective, so $x \in S$ lifts to a point $x' \in \Fr(\cH)$. The versal tuple $(\cvb, \cpb_1, \cdots, \cpb_\ell)$ determines a smooth morphism $p: \Fr(\cH) \rightarrow \Fl(d)^{\ell+1}$. Next, apply the same construction, with $S$ replaced by $\Fl(d)^\ell$, with $T = k^d \times \Fl(d)^\ell$ being the trivial rank $d$ vector bundle, and 
with flags $(F^\bu, \cvb_1,\cdots, \cvb_\ell)$, where $\{\cvb_i\}$ are the tautological flags and $F^\bu$ is an arbitrary fixed flag.  
We obtain a morphism $v: \Fr(T) \rightarrow \Fl(d)^{\ell+1}$. The tuple $(F^\bu, \cvb_1,\cdots, \cvb_\ell)$ is versal, by $\ell$ applications of Lemma \ref{lem:versalityCriterion3} (and the straightforward observation that a single flag is versal), so $v$ is smooth. It is straightforward to check that $v$ is also surjective, so $p(x') \in \Fl(d)^{\ell+1}$ lifts to $x'' \in \Fr(T)= \GL_d \times \Fl(d)^\ell$. Denote by $y$ the image of $x''$ in $\Fl(d)^\ell$. Denote by $\pi_1,\cdots,\pi_\ell$ the projection maps from $\Fl(d)^\ell$ to $\Fl(d)$. The maps and points constructed are summarized in Figure \ref{fig:staircase}. In this diagram, all morphisms are smooth.

\begin{figure}[h!] 
\begin{tikzcd}[column sep = tiny]
& x' \in \Fr(\cH) \arrow[dl, two heads] \arrow[dr,"p"{above}] 
& & 
x'' \in \Fr(T) \arrow[dl, two heads, "v"{above}] \arrow[dr]& & \\
x \in S & & \Fl(d)^{\ell+1} & & y \in \Fl(d)^\ell \arrow[dr, "\pi_i"{above}] & \\
& & & & & \Fl(d)\\
\end{tikzcd}
\caption{  The morphisms and chosen points in the proof of Theorem \ref{thm:kwy}.}
\label{fig:staircase}
\end{figure}
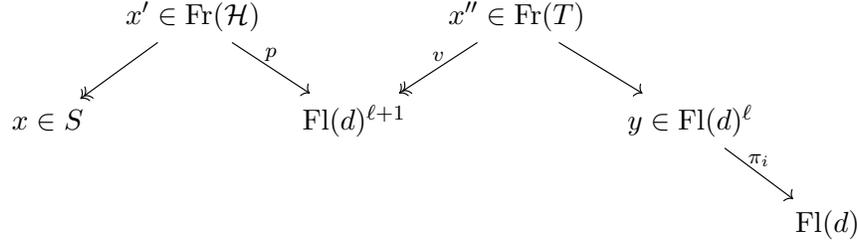
The intuition behind the first four arrows of the diagram is that a family of $\ell+1$ versal flags can, after coordinate change, be regarded as a family of $\ell$ versal flags together with one fixed flag.

Denote the tautological flags in the trivial bundle on $\Fl(d)^{\ell+1}$ by $\mathcal{U}^\bu_1,\cdots, \mathcal{U}^\bu_{\ell+1}$. By construction, $p^\ast \mathcal{U}^\bu_1, \cdots, p^\ast \mathcal{U}^\bu_{\ell+1}$ are equal to the pullbacks from $S$ of $\cvb, \cpb_1, \cdots, \cpb_\ell$, respectively. Similarly, $v^\ast \mathcal{U}^\bu_1, \cdots, v^\ast \mathcal{U}^\bu_{\ell+1}$ are equal to the pullbacks from $\Fl(d)^{\ell+1}$ of $F^\bu, \cvb_1, \cdots, \cvb_\ell$, respectively. Finally, denoting the tautological and trivial flags over $\Fl(d)$ by $\cub, E^\bu$ respectively, we have $\cvb_i = \pi_i^\ast \cub$ and $F^\bu = \pi_i^\ast E^\bu$. From this, we deduce the following equivalences via smooth morphisms.

\begin{eqnarray*}
(x, \cvb, \cpb_1, \cdots, \cpb_\ell) &\sim& (y, F^\bu, \cvb_1, \cdots, \cvb_\ell)\\
(x, \cvb, \cpb_i) &\sim& (\pi_i(y), E^\bu, \cub)
\end{eqnarray*}

Observe that for each $i$, $D_{\sigma_i}(F^\bu; \cvb_i) = \pi^{-1}_i ( D_{\sigma_i}(E^\bu; \cub))$ inside $\Fl(d)^\ell$. Therefore
\begin{eqnarray*}
D_{\sigma_1, \cdots, \sigma_\ell}(F^\bu; \cvb_1, \cdots, \cvb_\ell) &=& \bigcap_i \pi_i^{-1} (D_{\sigma_i}(E^\bu; \cub))\\
&\cong& \prod_i D_{\sigma_i}(E^\bu; \cub).
\end{eqnarray*}
Putting this together and applying Lemma \ref{lem:PviaSmooth}:
\begin{eqnarray*}
P(x, D_{\sigma_1, \cdots, \sigma_{\ell}}(\cvb; \cpb_1, \cdots, \cpb_\ell)) &=& P(y, D_{\sigma_1,\cdots,\sigma_\ell}(F^\bu; \cvb_1,\cdots, \cvb_\ell))\\
&=& f_{P,\ell}\Big(
P(\pi_1(y), D_{\sigma_1}(E^\bu; \cub)), \cdots, P(\pi_\ell(y), D_{\sigma_\ell}(E^\bu; \cub)) 
\Big)\\
&=& f_{P,\ell} \Big(
P(x, D_{\sigma_1}(\cvb; \cpb_1)), \cdots, P(x, D_{\sigma_\ell}(\cvb; \cpb_\ell))
\Big).
\end{eqnarray*}
\end{proof}

In fact, the proof of Theorem \ref{thm:kwy} shows that we can say slightly more: %the singularities of degeneracy loci are completely determined by the singularities of Schubert varieties in flag varieties.
we can reduce completely to Schubert varieties in flag varieties.

\begin{thm} \label{thm:kwy2}
With the same hypotheses as Theorem \ref{thm:kwy}, let 
$\sigma'_1 \leq \sigma_1, \cdots, \sigma'_\ell \leq \sigma_\ell$ 
%$\sigma'_1 \geq \sigma_1, \cdots, \sigma'_\ell \geq \sigma_\ell$ 
be permutations such that $x \in \widetilde{D}_{\sigma'_i}(\cvb; \cpb_i)$ for all $i$. Let $z_i$ be any point in the open Schubert cell $\widetilde{X}_{\sigma'_i} \subseteq \Fl(d)$. Then
$$
P(x,D_{\sigma_1, \cdots, \sigma_\ell}(\cvb; \cpb_1, \cdots, \cpb_\ell)) = f_{P,\ell}( P(z_1, X_{\sigma_1}), \cdots, P(z_\ell, X_{\sigma_\ell})).
$$
\end{thm}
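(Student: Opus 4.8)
The plan is to reuse essentially all the machinery assembled in the proof of Theorem~\ref{thm:kwy}, and to layer onto it one additional, purely local, observation about degeneracy loci of a \emph{single} versal pair of flags. Recall from that proof that we constructed, through the ``staircase'' of smooth morphisms in Figure~\ref{fig:staircase}, equivalences via smooth morphisms
\[
(x, \cvb, \cpb_i) \sim (\pi_i(y), E^\bu, \cub),
\]
where $E^\bu$ is a fixed flag and $\cub$ is the tautological flag on a single copy of $\Fl(d)$. The key point is that these equivalences do not merely match the $\ell$-fold intersection with a product of Schubert cells; each individual factor $(x,\cvb,\cpb_i)$ is matched, via smooth morphisms, with a point of $\Fl(d)$ together with one fixed and one tautological flag. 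So the whole problem reduces to the following statement about a single copy of $\Fl(d)$: if $z \in \Fl(d)$ and $z \in \widetilde{D}_{\sigma'}(\cub; E^\bu)$ for $\sigma' \leq \sigma$, then $P(z, D_\sigma(\cub; E^\bu)) = P(z', X_\sigma)$ for any $z' \in \widetilde{X}_{\sigma'}$. (Here, as in Remark~\ref{rem:dsFromSchubert}, $D_\sigma(\cub; E^\bu)$ \emph{is} the Schubert variety $X_\sigma(E^\bu) \subseteq \Fl(d)$, with $E^\bu$ playing the role of the fixed flag.)

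First I would set this up carefully: by symmetry of the roles of the two flags we may take $E^\bu$ fixed and let $\cub$ vary, so that $D_\sigma(\cub; E^\bu) = X_\sigma(E^\bu)$ is literally a Schubert variety and $\widetilde{D}_{\sigma'}(\cub;E^\bu) = \widetilde{X}_{\sigma'}(E^\bu)$ is the open Schubert cell indexed by $\sigma'$. So the claim becomes: the \'etale-local property $P$ of $X_\sigma(E^\bu)$ is constant along each open cell $\widetilde{X}_{\sigma'}(E^\bu)$, and moreover takes the same value at a point $z \in \widetilde X_{\sigma'}(E^\bu)$ regardless of which fixed flag $E^\bu$ was used to define the cell. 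The first half is the classical fact that the Borel $B$ (the stabilizer of $E^\bu$) acts on $X_\sigma(E^\bu)$ with the cells $\widetilde{X}_{\sigma'}(E^\bu)$, $\sigma'\le\sigma$, as orbits, so that for $z,z'$ in the same cell there is an automorphism of $X_\sigma(E^\bu)$ carrying $z$ to $z'$; since $P$ is \'etale-local it is in particular invariant under such isomorphisms, so $P(z,X_\sigma(E^\bu)) = P(z', X_\sigma(E^\bu))$. The second half is even simpler: $\GL_d$ acts transitively on complete flags and carries $X_\sigma(E^\bu)$ isomorphically to $X_\sigma(\widetilde E^\bu)$ for any other fixed flag $\widetilde E^\bu$, matching cells with cells, so the value of $P$ on a point of the $\sigma'$-cell does not depend on the fixed flag at all. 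Combining these, $P(z, X_\sigma(E^\bu))$ depends only on $\sigma$ and $\sigma'$, and equals $P(z', X_\sigma)$ for any $z'$ in the abstract open cell $\widetilde X_{\sigma'} \subseteq \Fl(d)$, which is exactly what we want to plug into $f_{P,\ell}$.

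Having established this local statement, I would finish by running through the end of the proof of Theorem~\ref{thm:kwy} verbatim, but replacing the last two lines of its final display. From the equivalence $(x,\cvb,\cpb_1,\dots,\cpb_\ell)\sim(y,F^\bu,\cvb_1,\dots,\cvb_\ell)$ and Lemma~\ref{lem:PviaSmooth} we get
\[
P(x, D_{\sigma_1,\dots,\sigma_\ell}(\cvb;\cpb_1,\dots,\cpb_\ell)) = P(y, D_{\sigma_1,\dots,\sigma_\ell}(F^\bu;\cvb_1,\dots,\cvb_\ell)) = f_{P,\ell}\bigl(P(\pi_1(y), X_{\sigma_1}), \dots, P(\pi_\ell(y), X_{\sigma_\ell})\bigr),
\]
using, as in the earlier proof, that $D_{\sigma_1,\dots,\sigma_\ell}(F^\bu; \cvb_1,\dots,\cvb_\ell)\cong \prod_i D_{\sigma_i}(E^\bu;\cub) = \prod_i X_{\sigma_i}$ together with the hypothesis on $f_{P,\ell}$. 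It remains to identify $P(\pi_i(y), X_{\sigma_i})$ with $P(z_i, X_{\sigma_i})$. Because smooth morphisms preserve the open dense strata defined by the rank conditions, the hypothesis $x\in\widetilde D_{\sigma'_i}(\cvb;\cpb_i)$ propagates through the staircase to give $\pi_i(y)\in\widetilde X_{\sigma'_i}\subseteq\Fl(d)$ — this is where I would be a little careful, tracking that ``$\widetilde{D}$'' loci pull back to ``$\widetilde{D}$'' loci under the smooth maps in Figure~\ref{fig:staircase}, which follows since those maps are base changes of flag-bundle or frame-bundle maps and the strata are cut out by the same rank equalities. Then the local statement proved above gives $P(\pi_i(y),X_{\sigma_i}) = P(z_i, X_{\sigma_i})$ for any $z_i\in\widetilde X_{\sigma'_i}$, completing the chain of equalities.

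The main obstacle, such as it is, is not any single hard step but the bookkeeping: one must verify that the ``tilde'' (open-stratum) information is genuinely preserved all the way along the staircase of smooth morphisms, so that the point $\pi_i(y)$ really does land in the $\sigma'_i$-cell and not merely in $X_{\sigma'_i}$. Once that is pinned down, the argument is the ``$B$-orbit $=$ cell'' fact for Schubert varieties plus the already-proved Theorem~\ref{thm:kwy} machinery, with no new geometric input. I would therefore spend the bulk of the written proof on the reduction to the single-$\Fl(d)$ statement and on this propagation-of-strata point, and dispatch the rest by citing the relevant portions of the proof of Theorem~\ref{thm:kwy}.
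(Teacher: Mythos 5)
There is a real gap, and it is precisely the point the paper's own proof is careful about: your reduction to a single copy of $\Fl(d)$ lands you on the wrong degeneracy locus, and ``by symmetry of the roles of the two flags'' silently conflates $D_\sigma(E^\bu;\cub)$ with $D_\sigma(\cub;E^\bu)$.

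In the staircase from the proof of Theorem~\ref{thm:kwy}, the equivalence is $(x,\cvb,\cpb_i)\sim(\pi_i(y),E^\bu,\cub)$, with $\cvb$ matching the \emph{fixed} flag $E^\bu$ and $\cpb_i$ matching the \emph{tautological} flag $\cub$. Applying Lemma~\ref{lem:PviaSmooth} therefore gives
\[
P\bigl(x, D_{\sigma_i}(\cvb;\cpb_i)\bigr)=P\bigl(\pi_i(y), D_{\sigma_i}(E^\bu;\cub)\bigr),
\]
and since $r^\sigma(a,b)=r^{\sigma^{-1}}(b,a)$ one has $D_{\sigma_i}(E^\bu;\cub)=D_{\sigma_i^{-1}}(\cub;E^\bu)=X_{\sigma_i^{-1}}(E^\bu)$, \emph{not} $X_{\sigma_i}(E^\bu)$. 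Likewise $\pi_i(y)$ lands in $\widetilde X_{(\sigma'_i)^{-1}}(E^\bu)$, not $\widetilde X_{\sigma'_i}$. Your Borel-orbit argument (which is fine as far as it goes, and is the same step the paper uses: ``Schubert cells are Borel orbits'') then yields $P(\pi_i(y),D_{\sigma_i}(E^\bu;\cub))=P(w,X_{\sigma_i^{-1}})$ for $w\in\widetilde X_{(\sigma'_i)^{-1}}$. To reach the stated conclusion you still need $P(w,X_{\sigma^{-1}})=P(z,X_{\sigma})$ for $w\in\widetilde X_{(\sigma')^{-1}}$ and $z\in\widetilde X_{\sigma'}$, and this is exactly the missing step. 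It is true --- one can argue via the factor-swap automorphism of $\Fl(d)^2$ (which carries $X_\sigma(F_0)$ isomorphically to $X_{\sigma^{-1}}(V_0)$, cell-by-cell), or, as the paper does, by re-running the argument just established with $\ell=1$, $S=\Fl(d)$, $x=w$, $\cvb=\cub$, $\cpb_1=E^\bu$, which inverts the roles once more and replaces $\sigma_i^{-1},(\sigma'_i)^{-1}$ by $\sigma_i,\sigma'_i$ --- but it has to be said, and ``by symmetry'' as you've written it is not a justification. Your worry about propagating the $\widetilde{\ }$-strata through the staircase is legitimate but routine; it works out, except that (again) the permutation indexing the cell you land in is $(\sigma'_i)^{-1}$. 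Fix the inverse-tracking and the proof is essentially the paper's.
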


\begin{proof}
In the notation of the proof of Theorem \ref{thm:kwy}, it suffices to prove that $$P(\pi_i(y), D_{\sigma_i}(E^\bu; \cvb)) = P(z_i, X_{\sigma_i}).$$ 
Observe that $D_{\sigma_i}(E^\bu; \cub) = D_{\sigma_i^{-1}}(\cub; E^\bu) = X_{\sigma_i^{-1}}(E^\bu)$. 
Similarly, $D_{\sigma_i'}(E^\bu,\cub) = X_{(\sigma_i')^{-1}}(E^\bu)$. Since %open Schubert varieties are $\GL_d$-orbits, 
Schubert cells are Borel orbits,
it follows that for any $w \in \widetilde{X}_{(\sigma_i')^{-1}}(E^\bu)$, 
$$P(\pi_i(y), D_{\sigma_i}(E^\bu; \cvb)) = P(w, X_{\sigma_i^{-1}}).$$ 
To see that the same result holds when $\sigma_i^{-1}, (\sigma_i')^{-1}$ are replaced with $\sigma_i, \sigma'_i$, observe that we may apply the entire argument to the case $S = \Fl(d), x = w, \ell = 1, \cvb =  \cub, \cpb_1 = E^\bu$, from which it follows that for any $z_i \in \widetilde{X}_{\sigma'_i}$, $P(w,X_{\sigma_i^{-1}}(E^\bu)) = P(z_i, X_{\sigma_i}(E^\bu))$, and the result follows.
\end{proof}

We also point out another useful consequence of the proof of Theorem \ref{thm:kwy}.

\begin{prop} \label{prop:codimViaKWY}
If $\cvb, \cpb_1, \cdots, \cpb_\ell$ is a versal $(\ell+1)$-tuple of flags in a vector bundle $\cH$ over $S$, then
$D_{\sigma_1,\cdots,\sigma_\ell}(\cvb; \cpb_1, \cdots, \cpb_\ell)$ has pure codimension $\inv(\omega \sigma_1) + \cdots + \inv(\omega \sigma_\ell)$ in $S$.
\end{prop}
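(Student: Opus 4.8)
\textbf{Proof plan for Proposition \ref{prop:codimViaKWY}.}

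The plan is to reduce the statement to the case of a product of Schubert varieties in $\Fl(d)$, using exactly the diagram of smooth morphisms (Figure \ref{fig:staircase}) constructed in the proof of Theorem \ref{thm:kwy}. First I would recall from that proof the equivalence via smooth morphisms $(x, \cvb, \cpb_1, \cdots, \cpb_\ell) \sim (y, F^\bu, \cvb_1, \cdots, \cvb_\ell)$ over the base $\Fl(d)^\ell$, together with the identification
$$D_{\sigma_1,\cdots,\sigma_\ell}(F^\bu; \cvb_1, \cdots, \cvb_\ell) \cong \prod_i D_{\sigma_i}(E^\bu; \cub) = \prod_i X_{\sigma_i^{-1}}(E^\bu).$$
By the last assertion of Lemma \ref{lem:PviaSmooth}, equivalence via smooth morphisms preserves local codimension, so it suffices to show that $\prod_i X_{\sigma_i^{-1}}$ has pure codimension $\sum_i \inv(\omega \sigma_i^{-1})$ in $\Fl(d)^\ell$, together with the elementary fact that $\inv(\omega \sigma_i^{-1}) = \inv(\omega \sigma_i)$ (both equal $\binom{d}{2}$ minus the number of non-inversions, which is symmetric under inversion).

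The core input is then the classical fact, recalled in \S\ref{ssec:schubert}, that each Schubert variety $X_w \subseteq \Fl(d)$ is irreducible of codimension $\inv(\omega w)$. Since codimension is additive for products of irreducible varieties over a field, $\prod_i X_{\sigma_i^{-1}}$ is irreducible of codimension $\sum_i \inv(\omega \sigma_i^{-1})$ in $\Fl(d)^\ell$, hence in particular of pure codimension. Pulling this back through the smooth morphisms $\rho$ and $p$ (each of which has locally constant relative dimension, so preserves codimension of preimages), we conclude that $D_{\sigma_1,\cdots,\sigma_\ell}(\cvb; \cpb_1, \cdots, \cpb_\ell)$ has pure codimension $\sum_i \inv(\omega \sigma_i)$ at the point $x$; since $x$ was an arbitrary point of the locus, purity holds globally.

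There is essentially no hard obstacle here: the proposition is a byproduct of the machinery already assembled for Theorem \ref{thm:kwy}, and the only point requiring a word of care is that ``equivalence via smooth morphisms preserves codimension'' is exactly the last sentence of Lemma \ref{lem:PviaSmooth}, so it may be invoked directly rather than re-derived. If one wished to avoid even citing Lemma \ref{lem:PviaSmooth}, one could instead argue directly: the frame bundle map $\Fr(\cH) \to S$ and the map $p\colon \Fr(\cH)\to \Fl(d)^{\ell+1}$ are smooth and surjective, $D_{\sigma_1,\cdots,\sigma_\ell}$ pulls back along $\Fr(\cH)\to S$ to $p^{-1}$ of $\bigcap_i \pi_i^{-1}(X_{\sigma_i^{-1}})$ inside $\Fl(d)^{\ell+1}$ (using $D_{\sigma_i}(\cub_1;\cub_{i+1}) = X_{\sigma_i^{-1}}$ up to reindexing), and smooth morphisms reflect and preserve purity of codimension. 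Either route finishes the argument in a few lines.
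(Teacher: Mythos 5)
Your proof is correct and follows essentially the same route as the paper's: reduce via the smooth-morphism equivalence of Figure \ref{fig:staircase} (invoking the codimension statement in Lemma \ref{lem:PviaSmooth}) to the case $S=\Fl(d)^\ell$, identify the degeneracy locus there with a product of Schubert varieties, and add up codimensions. You are slightly more explicit than the paper about the $\sigma_i$ versus $\sigma_i^{-1}$ bookkeeping (and the identity $\inv(\omega\sigma^{-1})=\inv(\omega\sigma)$), which is a clarification rather than a departure.
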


\begin{proof}
By the codimension statement in Lemma \ref{lem:PviaSmooth} and the equivalence via smooth morphisms in the proof of Theorem \ref{thm:kwy}, we may assume that $S = \Fl(d)^\ell$, $\cpb_i = \mathcal{U}^\bu_i$, and $\cvb = F^\bu$. The result now follows from the fact that, in $\Fl(d)$, the codimension of $D_{\sigma_i}(\cvb; F^\bu)$ is equal to the codimension of $X_{\sigma_i}$, which is $\inv(\omega \sigma_i)$ (see Section \ref{ssec:schubert}).
\end{proof}

Compare Corollary~\ref{cor:kwy-gen} below with \cite[Corollaries 1.2, 1.3, 1.4, 3.1]{knutson-woo-yong-singularities}; note also the generalization from $\ell=2$ to any $\ell$.
\begin{cor}\label{cor:kwy-gen}
By making various choices of $P$ and  $f_{P,\ell}$, we deduce the following about any degeneracy locus $D = D_{\sigma_1, \cdots, \sigma_\ell}$ for versal flags $\cvb,\cpb_1,\ldots,\cpb_\ell$.
\begin{enumerate}
    \item 
    The smooth locus of $D$ is the intersection of the smooth loci of each $D_{\sigma_i}$. %$D_{\sigma_i}(\cvb;\cpb_i).$
    \item $D$ is normal and Cohen-Macaulay.
    \item $D$ is Gorenstein at $x\in D$ if and only if each $X_{\sigma_i}$ is Gorenstein along $\widetilde{X}_{\sigma_i'}$, where $\sigma_i'$ is as in the hypotheses of Theorem~\ref{thm:kwy2}. 
    \item Recall that for $x\in X$, the $H$-polynomial $H_{x,X}(q)$ is defined by the equation
    $$ \mathrm{Hilb}(G_{\mathfrak{m}_x}(\cO_{X,x});q) = \frac{H_{x,X}(q)}{(1-q)^{\dim_x X}},$$
    where $G_{\mathfrak{m}_x}(\cO_{X,x})$ denotes the associated graded ring of $\cO_{X,x}$.  Recall the Hilbert-Samuel multiplicity is ${\rm mult}_{x,X} = H_{x,X}(1)$.  Then
    $$ H_{x,D}(q) = \prod_i H_{x,D_{\sigma_i}}(q),$$
    and hence
    $ {\rm mult}_{x,D} = \prod_i {\rm mult}_{x,D_{\sigma_i}}.$
\end{enumerate}
\end{cor}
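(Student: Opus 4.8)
The plan is to derive Corollary~\ref{cor:kwy-gen} as a sequence of applications of Theorem~\ref{thm:kwy} (and its refinement Theorem~\ref{thm:kwy2}), together with Proposition~\ref{prop:codimViaKWY} to control codimensions. In each case the work amounts to exhibiting an étale-local property $P$ that is preserved by products with affine space, checking that it satisfies a product formula $P(x,\prod X_i) = f_{P,\ell}(P(\pi_1(x),X_1),\ldots,P(\pi_\ell(x),X_\ell))$ for some function $f_{P,\ell}$, and then reading off the conclusion. The nontrivial part of the argument is not the invocation of the black box but the verification that each property is genuinely étale-local and multiplicative on products; that is where I expect the main obstacle to lie, particularly for Gorensteinness and for the $H$-polynomial, where the product formula requires a small commutative-algebra computation.

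Here is the order I would carry out the steps. First, for (1): take $P(x,X)$ to be the property ``$x$ is a smooth point of $X$''; smoothness is étale-local and is preserved by products with affine space, and a point of a product $\prod X_i$ is smooth if and only if each projection is a smooth point of the corresponding factor (using that the $X_i$ are finite type over $k$ and $k$ is algebraically closed, so smoothness equals regularity and regularity of a local ring of a product is the logical AND of regularity of the factors). Thus $f_{P,\ell}$ is the conjunction, and Theorem~\ref{thm:kwy} gives that the smooth locus of $D$ is the intersection of the smooth loci of the $D_{\sigma_i}$. Second, for (2): normality and Cohen-Macaulayness are each étale-local and stable under product with affine space, and a local ring of a product of finite-type $k$-schemes is normal (resp.\ Cohen-Macaulay) iff each factor's local ring is — for Cohen-Macaulayness this is the standard fact that a tensor product of Cohen-Macaulay local $k$-algebras is Cohen-Macaulay, and for normality one uses Serre's criterion $(R_1)+(S_2)$ together with the product formulas for regularity in codimension $1$ and for depth. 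Then invoke Theorem~\ref{thm:kwy} together with the fact (from Section~\ref{ssec:schubert}, via Fact~\ref{fact:fultonDegen} and Proposition~\ref{prop:codimViaKWY}) that each $D_{\sigma_i}$, being étale-locally a pullback of the normal Cohen-Macaulay Schubert variety $X_{\sigma_i}$ of the expected codimension, is itself normal and Cohen-Macaulay. Actually, since being normal/Cohen-Macaulay is itself expressible as a $P$ of the above type, it is cleanest to apply Theorem~\ref{thm:kwy2} directly, which reduces $P(x,D)$ to the $P(z_i,X_{\sigma_i})$, and $X_{\sigma_i}$ is normal and Cohen-Macaulay by the recalled facts.

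Third, for (3): take $P(x,X)$ to be ``$X$ is Gorenstein at $x$''; this is étale-local and stable under product with affine space, and a local ring $\cO_{X,x}\otimes_k\cO_{Y,y}$ is Gorenstein iff both factors are (Gorenstein = Cohen-Macaulay with one-dimensional top local cohomology / type one, and type is multiplicative under tensor product over a field). So $f_{P,\ell}$ is again the conjunction, and Theorem~\ref{thm:kwy2} gives $P(x,D) = \bigwedge_i P(z_i,X_{\sigma_i})$ where $z_i\in\widetilde X_{\sigma_i'}$; this says $D$ is Gorenstein at $x$ iff each $X_{\sigma_i}$ is Gorenstein at a point of the cell $\widetilde X_{\sigma_i'}$, i.e.\ Gorenstein along $\widetilde X_{\sigma_i'}$ (Gorensteinness, like all such properties, being constant along Borel orbits). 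Fourth, for (4): take $P(x,X) = H_{x,X}(q)$, valued in $\ZZ[q]$. One must check this is étale-local — true because completion along the maximal ideal commutes with étale base change, and the $H$-polynomial only depends on $\widehat{\cO}_{X,x}$ — and that $H_{x,X\times\mathbb{A}^n}(q) = H_{x',X}(q)$ for the corresponding point: this holds because tensoring with a polynomial ring over $k$ multiplies the Hilbert series of the associated graded ring by $(1-q)^{-n}$ while also raising the dimension by $n$, so the two changes cancel in the definition of $H$. The product formula is then $H_{(x_1,\ldots,x_\ell),\prod X_i}(q) = \prod_i H_{x_i,X_i}(q)$, which follows because the associated graded of a tensor product of local $k$-algebras with respect to the maximal ideal is the tensor product of the associated gradeds, so the Hilbert series multiply and the dimensions add. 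Applying Theorem~\ref{thm:kwy} with this $P$ gives $H_{x,D}(q) = \prod_i H_{x,D_{\sigma_i}}(q)$, and setting $q=1$ yields $\mathrm{mult}_{x,D} = \prod_i \mathrm{mult}_{x,D_{\sigma_i}}$. Throughout, the one subtlety to be careful about is matching up the ``product of affine space'' bookkeeping: the smooth morphisms produced in the proof of Theorem~\ref{thm:kwy} are not isomorphisms but only smooth, so each comparison passes through an intermediate $\mathbb{A}^n_V$, which is exactly why $P$ must be assumed preserved by products with affine space — and I would make sure each of the four properties above is verified against that hypothesis rather than merely against étale-localness.
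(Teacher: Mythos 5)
Your proposal is correct and follows essentially the same approach as the paper: choose $P$ to be smoothness, normality, Cohen--Macaulayness, Gorensteinness, or the $H$-polynomial, verify \'etale-localness and stability under products with affine space, take $f_{P,\ell}$ to be logical conjunction (resp.\ polynomial multiplication), and apply Theorem~\ref{thm:kwy} or Theorem~\ref{thm:kwy2}. The paper is terser --- it cites Stacks Project tags for \'etale-localness and gives only a brief remark about completions for item (4) --- but you supply the same argument with its omitted verifications spelled out.
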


\begin{proof}
For (1), (2) and (3), recall that Cohen-Macaulayness, reducedness, 
normalness, regularity, and being Gorenstein are all \'etale-local \cite[{Tag 025L, Tag 0E12}]{stacks-project}; moreover all Schubert varieties are normal and Cohen-Macaulay. In all three cases, we take the function $f_{P,\ell}$ to be ``logical and.''  For (4), the associated graded of $\cO_{D,x}$ can be computed from its completion, and \'etale morphisms induce isomorphisms on completed local rings (recall we always work over an algebraically closed field).  Hence the $H$-polynomial is also an \'etale-local invariant. The result follows by taking $f_{P,\ell}$ to be the usual multiplication of polynomials. 
\end{proof}

See \cite{woo-yong-when} for a characterization of which Schubert varieties, in type A, are Gorenstein in terms of permutation pattern-avoidance, and a conjectured general characterization of the non-Gorenstein locus. The conjecture is proven by Perrin for minuscule Schubert varieties \cite{perrin-gorenstein}; it is open in general as far as we know.

We can now prove Theorem \ref{thm:kwyIntro} from the introduction. This requires a short argument applying Theorem \ref{thm:kwy} to cases where one defining flag may not be a complete flag.

\begin{proof}[Proof of Theorem \ref{thm:kwyIntro}]
Let $i_0, \cdots, i_s$ be the coranks of the nests of sets. Let $F = \Fl(i_0, \cdots, i_s; \cH)$ and $F' = \Fl(H)$ (the complete flag variety); denote the projections to $S$ by $\pi, \pi'$ respectively. Let $\cvb$ be the tautological complete flag in $\pi'^\ast \cH$. Since each essential set $\Ess(\sigma(\ab_i))$ is contained in $\{i_0, \cdots, i_s\} \times [d]$, we see by comparing defining rank conditions that the inverse image of $X_{\ab_1}(\cpb_1) \cap \cdots X_{\ab_\ell}(\cpb_\ell)$ in $F'$ is equal to $D_{\sigma(\ab_1), \cdots, \sigma(\ab_\ell)} ( \cvb; \pi'^\ast \cpb_1, \cdots, \pi'^\ast \cpb_\ell)$, to which Theorem \ref{thm:kwy} applies. The Theorem now follows from the fact that the forgetful map $F' \rightarrow F$ is a fiber bundle, with fibers \'etale-locally isomorphic to affine space.
\end{proof}

\section{Relative Richardson varieties}
\label{sec:rr}

\subsection{Definitions}\label{ssec:relrich}

We now define relative Richardson varieties and deduce their basic properties from the results of the previous section.  In particular we take $\ell=2$ in this section, as we are not aware of a generalization of the cohomological arguments below to higher  $\ell$. 

\begin{defn} \label{loc:rab}
Let $S$ be an irreducible smooth variety, with a rank $d$ vector bundle $\cH$ and two complete versal flags $\cpb,\cqb$. Let $0=i_0 < i_1 < \cdots < i_s = d$ be integers. For any two nests of sets $\ab,\bb$ with coranks $i_0, \cdots, i_s$, define the subvariety of $\Fl(i_0,\ldots,i_s;\cH)$
$$\Rab(\cpb, \cqb) = X_{\ab}(\cpb) \cap X_{\bb}(\cqb),$$
where $X_{\ab}(\cpb)$ and $X_{\bb}(\cqb)$ are defined in~\S\ref{ssec:degen}. Such a variety $\Rab$ is called a \emph{relative Richardson variety over $S$.}

For any two permutations $\sigma,\tau$, we also write $\Rst(\cpb,\cqb)$ \label{loc:rst} as alternate notation for $\Rab(\cpb,\cqb)$, where $\ab,\bb$ are complete nests of sets with decreasing completions $\sigma,\tau$, respectively. We write $\widetilde{R}_{\ab,\bb}$ for the open subscheme where all the defining rank conditions hold with equality, and use the notation $\widetilde{R}_{\sigma,\tau}$ similarly.
\end{defn}

We emphasize that we reserve the phrase ``relative Richardson variety'' for situations where $S$ is smooth and irreducible and the flags $\cpb,\cqb$ are versal, so that relative Richardson varieties share the geometric properties enjoyed by Richardson varieties, as summarized in Theorem \ref{thm:relativeRichardson}.

\exnow{
Let $S=\Spec k$, let $d=5$, and 
%\begin{align*}
$$A^\bu  =  \{0,1,2,3,4\} \supset \{0,2,4\} \supset \emptyset,\quad
B^\bu = \{0,1,2,3,4\} \supset \{0,1,2\} \supset  \emptyset .$$
%\end{align*}
Then $\Rab$ is isomorphic to a Schubert variety with respect to the flag $P^\bu$, parametrizing 2-dimensional subspaces $V^3$ with 
$$
\dim V^3\cap P^2 \ge 2 \quad\text{and}\quad
\dim V^3\cap P^4 \ge 1.
$$
}

The cohomological statements in Theorem \ref{thm:relativeRichardson} will be proved in Section \ref{ssec:coh}. The rest of Theorem \ref{thm:relativeRichardson} follows readily from the results of the previous section, as summarized below.

\begin{thm} \label{thm:RabLocal}
A relative Richardson variety $\Rab$ is normal and Cohen-Macaulay, of pure codimension $\inv(\omega \sigma(\ab)) + \inv(\omega \sigma(\bb))$ in $\Fl(\cH; i_0, \cdots, i_s)$, and the smooth locus of $\Rab$ is equal to the intersection of the smooth loci of the relative Schubert varieties $X_{A^\bu}$ and $X_{B^\bu}$.  The open subscheme $\widetilde{R}_{\ab,\bb}$ is dense in the smooth locus of $\Rab$.
\end{thm}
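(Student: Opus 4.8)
The idea is to realize $\Rab$ as a degeneracy locus of the form $D_{\sigma_1,\sigma_2}$ to which the machinery of Section~\ref{sec:KWY} applies, and then read off each assertion from the corollaries there. Concretely, pass to the complete flag variety $F' = \Fl(\cH)$ with forgetful map $q\colon F' \to \Fl(i_0,\ldots,i_s;\cH)$, and let $\cvb$ be the tautological complete flag on $\pi'^\ast\cH$. By Lemma~\ref{lem:versalityCriterion3}, the triple $(\cvb, \pi'^\ast\cpb, \pi'^\ast\cqb)$ is a versal $3$-tuple of complete flags on $F'$ (which is smooth, being a flag bundle over the smooth $S$). As in the proof of Theorem~\ref{thm:kwyIntro}, since $\Ess(\sigma(\ab))$ and $\Ess(\sigma(\bb))$ lie in $\{i_0,\ldots,i_s\}\times[d]$, comparing the defining rank conditions shows
$$q^{-1}(\Rab) = D_{\sigma(\ab),\sigma(\bb)}(\cvb;\pi'^\ast\cpb,\pi'^\ast\cqb).$$

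Now apply the results of Section~\ref{sec:KWY} to this degeneracy locus. Proposition~\ref{prop:codimViaKWY} gives that $q^{-1}(\Rab)$ has pure codimension $\inv(\omega\sigma(\ab)) + \inv(\omega\sigma(\bb))$ in $F'$; Corollary~\ref{cor:kwy-gen}(2) gives that it is normal and Cohen--Macaulay; and Corollary~\ref{cor:kwy-gen}(1), together with the identifications $q^{-1}(X_{\ab}(\cpb)) = D_{\sigma(\ab)}(\cvb;\pi'^\ast\cpb)$ and likewise for $\bb$, gives that its smooth locus is the intersection of the smooth loci of $q^{-1}(X_{\ab})$ and $q^{-1}(X_{\bb})$. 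Since $q\colon F'\to\Fl(i_0,\ldots,i_s;\cH)$ is a fiber bundle whose fibers are smooth and irreducible (in fact \'etale-locally affine space), each of these properties descends: codimension is preserved, normality and Cohen--Macaulayness are \'etale-local hence pass to $\Rab$, and the smooth locus of $\Rab$ is the image of the smooth locus of $q^{-1}(\Rab)$, giving $(\Rab)^{\mathrm{sm}} = (X_{\ab})^{\mathrm{sm}}\cap(X_{\bb})^{\mathrm{sm}}$.

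For the final sentence, I would argue density of $\widetilde{R}_{\ab,\bb}$ in $(\Rab)^{\mathrm{sm}}$ as follows. Pulling back to $F'$ it suffices to show $\widetilde{D}_{\sigma(\ab),\sigma(\bb)} := \widetilde{D}_{\sigma(\ab)}(\cvb;\pi'^\ast\cpb)\cap\widetilde{D}_{\sigma(\bb)}(\cvb;\pi'^\ast\cqb)$ is dense in the smooth locus of $D_{\sigma(\ab),\sigma(\bb)}$. The complement of $\widetilde{D}_{\sigma(\ab),\sigma(\bb)}$ inside $D_{\sigma(\ab),\sigma(\bb)}$ is a union of loci $D_{\sigma'_1,\sigma'_2}$ with $\sigma'_1\le\sigma(\ab)$, $\sigma'_2\le\sigma(\bb)$ and at least one inequality strict; by Proposition~\ref{prop:codimViaKWY} each such locus has codimension $\inv(\omega\sigma'_1)+\inv(\omega\sigma'_2)$ in $F'$, which is strictly larger than $\inv(\omega\sigma(\ab))+\inv(\omega\sigma(\bb))$, hence strictly smaller dimension than $D_{\sigma(\ab),\sigma(\bb)}$, which is pure-dimensional. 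So $\widetilde{D}_{\sigma(\ab),\sigma(\bb)}$ is a dense open subset; intersecting with the (dense) smooth locus and descending along $q$ finishes the argument. The main obstacle I anticipate is the bookkeeping in the first step --- verifying carefully that the partial-flag rank conditions defining $X_{\ab}\cap X_{\bb}$ pull back exactly (scheme-theoretically, not just set-theoretically) to the degeneracy-locus conditions defining $D_{\sigma(\ab),\sigma(\bb)}$, and that the essential-set reduction of Fact~\ref{fact:essSet} legitimately allows one to work with the decreasing completions $\sigma(\ab),\sigma(\bb)$ rather than with partial data.
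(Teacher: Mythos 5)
Your proof takes essentially the same route as the paper's: pass to the complete flag variety, identify the preimage of $\Rab$ with the degeneracy locus $D_{\sigma(\ab),\sigma(\bb)}(\cvb;\pi^\ast\cpb,\pi^\ast\cqb)$, invoke Lemma~\ref{lem:versalityCriterion3} for versality of the triple, apply Proposition~\ref{prop:codimViaKWY} and Corollary~\ref{cor:kwy-gen} for the codimension, normality, Cohen--Macaulayness, and smooth-locus statements, and descend along the forgetful fiber bundle. Your argument for the density of $\widetilde{R}_{\ab,\bb}$, via the observation that the complement is a union of strata $D_{\sigma',\tau'}$ of strictly larger codimension by Proposition~\ref{prop:codimViaKWY}, is a clean and slightly more self-contained route than the paper's (which invokes Theorem~\ref{thm:kwy2} together with the description of the singular locus of Schubert varieties from \S\ref{ssec:schubert}).

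One point worth flagging: the paper's proof shows that $\widetilde{R}_{\sigma,\tau}$ is both dense in $R_{\sigma,\tau}$ \emph{and contained in} the smooth locus, the latter coming from Theorem~\ref{thm:kwy2} plus the fact that a point of the open Schubert cell $\widetilde{X}_\sigma$ is always a smooth point of $X_\sigma$. Your final step ("intersecting with the dense smooth locus") establishes that $\widetilde{D}_{\sigma,\tau}\cap D^{\mathrm{sm}}_{\sigma,\tau}$ is dense, which suffices for the literal reading of the statement as $R^{\mathrm{sm}}\subseteq\overline{\widetilde{R}}$, but does not by itself show $\widetilde{R}_{\ab,\bb}\subseteq (\Rab)^{\mathrm{sm}}$. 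You already have all the tools to close this: at any point of $\widetilde{R}_{\sigma,\tau}$ the associated permutations are exactly $\sigma$ and $\tau$, so by Theorem~\ref{thm:kwy2} the local singularity type matches that of a point of $\widetilde{X}_\sigma\times\widetilde{X}_\tau$, which is smooth; equivalently, apply Corollary~\ref{cor:kwy-gen}(1) together with the fact that $\widetilde{D}_\sigma\subseteq D^{\mathrm{sm}}_\sigma$. It would be worth adding this sentence to make the conclusion airtight.
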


\begin{proof}
Let $\sigma = \sigma(\ab)$ and $\tau = \sigma(\bb)$. Observe that $\Rst = f^{-1} ( \Rab)$, where $f: \Fl(\cH) \rightarrow \Fl(i_0, \cdots, i_s; \cH)$ is the forgetful morphism from the complete flag variety. Since $f$ is a fiber bundle with smooth irreducible fibers, we see that it suffices to prove the theorem for $\Rst$, i.e., for the case of complete flags. Note that
$$\Rst = \Dst( \cvb; \pi^\ast \cpb, \pi^\ast \cqb),$$
where $\pi: \Fl(\cH) \rightarrow S$ is the structure map and $\cvb$ is the tautological bundle of $\Fl(\cH)$.  The flags $\cvb,\pi^\ast \cpb,$ and  $\pi^\ast \cqb$ are versal by Lemma \ref{lem:versalityCriterion3}, so Proposition \ref{prop:codimViaKWY} and Corollary \ref{cor:kwy-gen} imply that $\Rst$ is normal and Cohen-Macaulay of pure codimension $\inv(\omega \sigma)+\inv(\omega \tau)$ in $\Fl(\cH)$, with smooth locus equal to the intersection of the smooth loci of $\Ds(\cvb; \pi^\ast \cpb) = X_\sigma(\cpb)$ and $D_\tau(\cvb; \pi^\ast \cqb) = X_\tau(\cqb)$. 
Theorem \ref{thm:kwy2} and the description in Section \ref{ssec:schubert} of the singular locus of Schubert varieties show that the singular locus of $\Rst$ is a union of certain loci $R_{\sigma',\tau'}$ where $\sigma' < \sigma$ or $\tau' < \tau$, and Proposition \ref{prop:codimViaKWY} shows that each such locus has positive codimension. In particular, $\widetilde{R}_{\sigma,\tau}$ is dense and contained in the smooth locus of $\Rst$.
%The complement of $\widetilde{R}_{\sigma,\tau}$ in $\Rst$ is a union of loci $R_{\sigma',\tau'}$ for $\sigma' $
\end{proof}

\subsection{Cohomology of relative Richardson varieties} \label{ssec:coh}

In order to understand the cohomology of the schemes $\Rab$, we will relate them to each other, and to the base $S$, via morphisms for which the total pushforward of the structure sheaf is trivial. Call a proper morphism of $k$-schemes $\pi\colon X\to Y$ {\em $\cO$-connected} if $\cO_Y \to \pi_* \cO_X$ is an isomorphism, and call $\pi$ a {\em cohomological equivalence} if it is $\cO$-connected and  
$$R^i \pi_* \cO_X = 0 \quad\text{for all } i>0.$$  
We note that the term ``cohomological equivalence'' is sometimes used in the literature specifically for birational morphisms (e.g. in \cite{kovacs-rational}), but we use it in a more general way.
Note (see e.g.~\cite[Exercise 8.1]{hartshorne-algebraic}) that if $\pi$ is a cohomological equivalence then it induces canonical isomorphisms $H^i(X,\cO_X)\cong H^i (Y,\cO_Y)$
for all $i\ge 0$.  In particular, we have $\chi(X,\cO_X) = \chi(Y,\cO_Y)$
in this situation.  In what follows, we will make use of the following fact, which can be deduced from the Grothendieck spectral sequence: if $f\colon X' \to X$ is a cohomological equivalence and $\pi\colon X \to Y$ is any proper morphism, then $\pi$ is a cohomological equivalence if and only if $\pi\circ f$ is a cohomological equivalence. The purpose of this subsection is to prove the following theorem.

\begin{thm} \label{thm:rabgood}
Let $\cP^\bu,\cQ^\bu$ be versal flags in a rank-$d$ vector bundle $\cH$ on a scheme $S$, and let $\ab, \bb$ be any two nests of sets as defined in \S\ref{ssec:relrich}. Denote by $R_{\ab, \bb}$ the resulting relative Richardson variety.
Let $\sigma = \sigma(\ab)$ and $\tau = \sigma(\bb).$
%Let $\varphi = \varphi( \sigma(\ab), \sigma(\bb))$. 
Then the image of the morphism $\Rab \rightarrow S$ is $D_{\tau \star \sigma^{-1}}(\cP^\bu; \cQ^\bu)$, and the morphism $\Rab \rightarrow D_{\tau \star \sigma^{-1}}(\cP^\bu; \cQ^\bu)$ is a cohomological equivalence. 
\end{thm}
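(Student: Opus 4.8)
The plan is to reduce, via the ``equivalence via smooth morphisms'' machinery already developed in Section~\ref{sec:KWY}, to a single universal model and then verify the statement there by exhibiting an explicit fibration. First I would observe that since $\cO$-connectedness and the vanishing of higher direct images are \'etale-local on the base and are preserved under pullback along smooth surjections (this is where the Grothendieck spectral sequence remark is used: if $f\colon X'\to X$ is a cohomological equivalence and $\pi\colon X\to Y$ is proper, then $\pi$ is a cohomological equivalence iff $\pi\circ f$ is), it suffices to treat the universal case $S = \Fl(d)^2$ with $\cpb = \mathcal{U}_1^\bu$, $\cqb = \mathcal{U}_2^\bu$ the two tautological flags, exactly as in the proof of Theorem~\ref{thm:kwy}. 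Indeed, $\Rst(\cpb,\cqb)$ over an arbitrary versal $S$ is the pullback of $\Rst(\mathcal{U}_1^\bu,\mathcal{U}_2^\bu)$ along the smooth classifying morphism, and $D_{\tau\star\sigma^{-1}}(\cpb;\cqb)$ is likewise the pullback of $D_{\tau\star\sigma^{-1}}(\mathcal{U}_1^\bu;\mathcal{U}_2^\bu)$. The first thing to pin down is that the image really is $D_{\tau\star\sigma^{-1}}$: one containment is the inequality preceding Fact~\ref{fact:demazure} (the Demazure product bounds the associated permutation of $P^\bu, R^\bu$ above), so the image of $\Rst\to S$ lands in $D_{\tau\star\sigma^{-1}}$; for the reverse, I would show the fibers over $D_{\tau\star\sigma^{-1}}$ are nonempty, which follows because over a point where $\cpb,\cqb$ meet with permutation exactly some $\rho\le\tau\star\sigma^{-1}$, the classical (non-relative) Richardson variety $X_\sigma(P^\bu)\cap X_\tau(Q^\bu)$ is nonempty precisely when $\rho\le\tau\star\sigma^{-1}$, a standard fact about Richardson varieties in flag varieties that can be recovered from Fact~\ref{fact:bruhat} and Fact~\ref{fact:demazure}.

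Next I would analyze the morphism $\Rst\to D_{\tau\star\sigma^{-1}}$ stratum by stratum. Stratify $S=\Fl(d)^2$ by the locally closed subvarieties $\widetilde{D}_\rho$ over which the two flags have associated permutation exactly $\rho$, for $\rho\le\tau\star\sigma^{-1}$; each $\widetilde{D}_\rho$ is a single $\GL_d$-orbit, hence smooth, and the fiber of $\Rst$ over a point of $\widetilde{D}_\rho$ is the classical Richardson variety $X_\sigma\cap X_\tau$ taken with respect to a pair of flags in relative position $\rho$. These classical Richardson varieties are known to be normal, Cohen--Macaulay, rational, and to have rational singularities, hence $H^0(\cO)=k$ and $H^{>0}(\cO)=0$: their structure sheaf cohomology is that of a point. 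The heart of the argument is then to upgrade this fiberwise statement to the relative statement $\pi_*\cO_{\Rst}=\cO_{D_{\tau\star\sigma^{-1}}}$ and $R^{i}\pi_*\cO_{\Rst}=0$. I would do this by cohomology and base change: the morphism $\pi\colon\Rst\to D_{\tau\star\sigma^{-1}}$ is proper and flat (flatness over the smooth strata $\widetilde{D}_\rho$ follows from the constancy of fiber dimension established via Theorem~\ref{thm:RabLocal} / Proposition~\ref{prop:codimViaKWY}, and over the whole of $D_{\tau\star\sigma^{-1}}$ one uses that $\Rst$ and $D_{\tau\star\sigma^{-1}}$ are both Cohen--Macaulay of the expected dimensions so the fibers are equidimensional), and the fibers have $H^{>0}(\cO)=0$, so by cohomology and base change $R^i\pi_*\cO=0$ for $i>0$ and $\pi_*\cO$ is locally free of rank $1$; being a quotient-compatible rank-$1$ sheaf with $\cO_{D_{\tau\star\sigma^{-1}}}\to\pi_*\cO$ an isomorphism on each fiber (because $H^0(\cO)=k$ on fibers and $D_{\tau\star\sigma^{-1}}$ is reduced), it is $\cO_{D_{\tau\star\sigma^{-1}}}$.

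The main obstacle is the flatness of $\pi\colon\Rst\to D_{\tau\star\sigma^{-1}}$, because $D_{\tau\star\sigma^{-1}}$ is itself singular in general (it is a relative Schubert-type degeneracy locus, normal and Cohen--Macaulay but not smooth), so I cannot simply invoke ``flat because source and target are smooth with equidimensional fibers.'' I would handle this by the miracle-flatness criterion: $\Rst$ is Cohen--Macaulay (Theorem~\ref{thm:RabLocal}), $D_{\tau\star\sigma^{-1}}$ is Cohen--Macaulay, and I need all fibers of $\pi$ to have the same dimension $= \dim\Rst - \dim D_{\tau\star\sigma^{-1}}$; the fiber dimension count over $\widetilde{D}_\rho$ is $\dim(X_\sigma\cap X_\tau\text{ with flags in position }\rho)=\binom{d}{2}-\inv(\omega\sigma)-\inv(\omega\tau)+\inv(\rho)$ (the classical Richardson variety dimension), and separately $\codim_S\widetilde{D}_\rho = \inv(\omega\rho)$, so the total dimension of the stratum $\pi^{-1}(\widetilde{D}_\rho)$ works out, after using $\inv(\omega\rho)=\binom{d}{2}-\inv(\rho)$, to be independent of $\rho$ and equal to $\dim\Rst - (\binom{d}{2} - \inv(\omega(\tau\star\sigma^{-1})))= \dim\Rst - \dim D_{\tau\star\sigma^{-1}}$ as required. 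Once flatness is in hand the rest is the standard cohomology-and-base-change bookkeeping sketched above; alternatively, if making the fiber-dimension identities airtight proves delicate, one can run an induction on $\inv(\omega\sigma)+\inv(\omega\tau)$ using almost-transverse degenerations and known fibration structures on Richardson varieties, peeling off one simple transposition at a time and tracking the Demazure product via Equation~\eqref{eq:star-recursion}, but I expect the direct flatness argument to be cleanest.
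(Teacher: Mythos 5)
Your approach is genuinely different from the paper's, and while the broad strategy (reduce to a universal case, then argue fiberwise) is natural, there are substantive gaps that I don't think can be filled without importing machinery well beyond what the paper develops.

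The paper proves this by an induction on $\inv(\tau)$: the base case $\tau = \mathrm{id}$ is handled by a clean functor-of-points argument (Lemma~\ref{lem:RtoS}), and the inductive step peels off one simple transposition at a time (Lemma~\ref{lem:goodMoves}), using the explicit $\PP^1$-bundle and closed-immersion structures of Lemmas~\ref{lem:GrassBundle2} and~\ref{lem:board-l4}, together with Facts~\ref{fact:grassBundle}--\ref{brion}. At no point does the paper's argument need to establish flatness of $\pi$ or understand the structure of individual fibers over nontransverse points. Your proposal instead tries to prove the cohomological equivalence directly via cohomology and base change.

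The first gap is in the flatness argument. You invoke a ``miracle flatness'' criterion with $\Rst$ and $D_{\tau\star\sigma^{-1}}$ both Cohen--Macaulay. But miracle flatness requires the \emph{target} to be \emph{regular}, not merely Cohen--Macaulay; the standard statement is ``$X$ Cohen--Macaulay, $Y$ regular, equidimensional fibers $\Rightarrow$ flat.'' This hypothesis cannot be weakened to $Y$ Cohen--Macaulay: for instance, the normalization of a nodal cubic curve is a finite morphism with equidimensional fibers between Cohen--Macaulay schemes that is not flat. Since $D_{\tau\star\sigma^{-1}}$ is singular in general (it is a degeneracy locus isomorphic, after the smooth-morphism translation of Theorem~\ref{thm:kwy}, to a Schubert variety), the criterion you cite simply does not apply, and flatness of $\pi$ is not established. (As a smaller slip, the fiber-dimension formula you write down has $\inv(\rho)$ where it should have $\inv(\omega\rho)$; with $\inv(\rho)$ the stratum dimensions are not constant in $\rho$, as you can check for $d=2$. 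But this is a secondary point compared to the target regularity issue.)

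The second and larger gap is your claim that the fibers over $\widetilde{D}_\rho$ for $\rho \le \tau\star\sigma^{-1}$ are ``classical Richardson varieties'' with $H^0(\cO)=k$ and $H^{>0}(\cO)=0$. When $\rho \neq \omega$, the fiber is an intersection $X_\sigma(P^\bu) \cap X_\tau(Q^\bu)$ where the reference flags $P^\bu, Q^\bu$ are \emph{not} transverse. These are not Richardson varieties in the classical sense: connectedness, reducedness, normality, Cohen--Macaulayness, and rationality of such intersections are all nontrivial (in fact, establishing precisely these kinds of properties for relative objects without presupposing them fiberwise is the whole point of Theorem~\ref{thm:relativeRichardson}). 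Invoking them as ``known'' is circular in spirit, or at least requires citing and applying substantial literature on Schubert intersections for flags in arbitrary relative position (e.g.\ Frobenius-splitting arguments in positive characteristic, or positroid-variety technology), none of which the paper uses or needs. Similarly, the assertion that nonemptiness of the fiber is characterized by $\rho \le \tau\star\sigma^{-1}$ is plausible and, I believe, true, but it is not an elementary consequence of Facts~\ref{fact:bruhat} and~\ref{fact:demazure} alone; in the paper this nonemptiness statement (as scheme-theoretic image) emerges as an \emph{output} of the induction, not an input.

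Your fallback suggestion at the end---induction on $\inv(\omega\sigma)+\inv(\omega\tau)$ peeling off one simple transposition at a time and tracking the Demazure product via Equation~\eqref{eq:star-recursion}---is indeed essentially what the paper does, and I would encourage you to develop that route instead: it avoids flatness entirely, and the key technical tool you need is the short-exact-sequence argument for $\PP^1$-bundles (Fact~\ref{brion}) combined with the $\cO$-connectedness of birational morphisms of normal varieties (Fact~\ref{fact:oconn}), both applied to the explicit forgetful maps between partial flag bundles.
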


Our strategy for proving Theorem \ref{thm:rabgood} is as follows. We first reduce to the case of complete flag varieties, i.e., varieties $\Rst$ (Corollary \ref{cor:sigmaGood}). We then show that the desired statement about $\Rst$ can be deduced from the statement for $R_{\sigma', \tau'}$, where $\sigma',\tau'$ is another pair of permutations with $\inv(\sigma') < \inv(\sigma)$ and $\tau' \star \sigma'^{-1} = \tau \star \sigma^{-1}$ (Lemma \ref{lem:goodMoves}). This reduces by induction to the case of the morphism $R_{\mathrm{id}, \tau \star \sigma^{-1}} \to S$, which we show (Lemma \ref{lem:RtoS}).

Throughout the section, we fix the choice of $S, \cH, \cP^\bu, \cQ^\bu$. We do not assume in general that $\cP^\bu, \cQ^\bu$ are versal, as several auxiliary results do not require this hypothesis; we will state it specifically when it is needed.
We begin with three useful criteria for cohomological equivalences.

\begin{fact} \label{fact:grassBundle}
If $f: X \rightarrow Y$ is a Grassmannian bundle, then $f$ is a cohomological equivalence.
\end{fact}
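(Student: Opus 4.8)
The plan is to reduce, by flat base change, to the classical vanishing of the higher coherent cohomology of a single Grassmannian over a field.

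First I would make two formal reductions. Since a Grassmannian bundle $f\colon X\to Y$ is proper and flat over its base, the formation of $f_*\cO_X$ and of $R^if_*\cO_X$ commutes with flat base change on $Y$; equivalently, higher direct images commute with restriction to opens, and a quasi-coherent sheaf vanishes (resp.\ a morphism of quasi-coherent sheaves is an isomorphism) if and only if this holds after pullback along a faithfully flat morphism. So it suffices to treat the trivial bundle $\mathrm{pr}\colon\Gr(k,n)\times U\to U$ over a member $U$ of a (Zariski, or étale) cover of $Y$ trivializing $X$. Now $\mathrm{pr}$ is the base change of $\Gr(k,n)\to\Spec k$ along the flat morphism $U\to\Spec k$, so flat base change gives
$$R^i\mathrm{pr}_*\,\cO_{\Gr(k,n)\times U}\;\cong\;H^i\!\big(\Gr(k,n),\cO_{\Gr(k,n)}\big)\otimes_k\cO_U .$$
Invoking the standard facts $H^0(\Gr(k,n),\cO)=k$ and $H^i(\Gr(k,n),\cO)=0$ for $i>0$ — valid in every characteristic, e.g.\ by Kempf vanishing for the homogeneous space $\Gr(k,n)$, or by the affine Schubert paving — this gives $\mathrm{pr}_*\cO=\cO_U$ (and the isomorphism is the adjunction unit, being the pullback of $k\xrightarrow{\sim}H^0(\Gr(k,n),\cO)$) together with $R^i\mathrm{pr}_*\cO=0$ for $i>0$. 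Gluing over the cover, $\cO_Y\to f_*\cO_X$ is an isomorphism and $R^if_*\cO_X=0$ for $i>0$, so $f$ is a cohomological equivalence; the case of a partial flag bundle over $S$ then follows at once, since such a bundle is a tower of Grassmannian bundles and, by the cancellation criterion recalled above, a composite of cohomological equivalences is one.

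I do not anticipate any real obstacle here: the only input beyond base-change and descent bookkeeping is the classical Grassmannian vanishing, and the one point needing a little care is that the local $i=0$ isomorphisms are the adjunction units, so that they glue to the canonical global map $\cO_Y\to f_*\cO_X$. If one prefers a self-contained argument avoiding even the Grassmannian cohomology computation, note that a projective bundle is a cohomological equivalence (the relative form of $H^\bullet(\PP^n,\cO)=k$), hence so is any tower of projective bundles; the complete flag bundle $\Fl(\cH)\to S$ factors as $\Fl(\cH)\to\Gr(k,\cH)\to S$, and, using the tautological sub- and quotient bundles on $\Gr(k,\cH)$, the first map $\Fl(\cH)\to\Gr(k,\cH)$ is itself such a tower; since $\Fl(\cH)\to\Gr(k,\cH)$ and $\Fl(\cH)\to S$ are then both cohomological equivalences, the cancellation criterion forces $\Gr(k,\cH)\to S$ to be one as well.
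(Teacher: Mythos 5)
Your proposal is correct and follows essentially the same route as the paper: it reduces, by flat base change and gluing, to the fact that $\Gr(t,n)$ over $k$ has trivial structure-sheaf cohomology (which the paper justifies via Borel--Weil--Bott), and then offers the alternative argument by towers of $\PP^m$-bundles together with the cancellation criterion, which is in substance the paper's second justification (induction using the two-step flag bundle $\Fl(t-1,t;n)$ as a correspondence between $\Gr(t-1,n)$ and $\Gr(t,n)$). The only real difference is that you spell out the base-change and gluing bookkeeping that the paper leaves implicit.
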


\begin{proof}
This follows from the fact that the structure sheaf of the Grassmannian variety $\Gr(t,n)$ has no higher cohomology. This in turn follows from the Borel-Weil-Bott theorem, or can be seen more directly by induction on $t$, by observing that both the forgetful morphisms $\Fl(t-1,t;n) \to \Gr(t-1,n)$ and $\Fl(t-1,t;n) \to \Gr(t,n)$ are cohomological equivalences.
\end{proof}

\begin{fact} \label{fact:oconn}
If $f: X \rightarrow Y$ is a birational morphism of normal, irreducible, projective varieties, then $f$ is $\cO$-connected.
\end{fact}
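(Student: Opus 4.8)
The plan is to realize $f_\ast\cO_X$ as a finite $\cO_Y$-algebra caught between $\cO_Y$ and the constant sheaf of rational functions on $Y$, and then to use normality of $Y$ to collapse it onto $\cO_Y$. First I would note that $f$ is proper: $X$ is proper over $\Spec k$ because it is projective, and $Y$ is separated, so $f$ is proper. By Grothendieck's finiteness theorem for proper morphisms, $f_\ast\cO_X$ is then a coherent sheaf of $\cO_Y$-algebras, and the assertion to prove is exactly that the canonical morphism $\cO_Y \to f_\ast\cO_X$ is an isomorphism.

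To see this I would work on an affine open $V = \Spec B \subseteq Y$ and set $A = \cO_X(f^{-1}(V))$, which is a finite $B$-module by coherence. Since $f$ is birational and $X$ is integral, the generic point of $X$ lies over the generic point of $Y$, the function fields are canonically identified, $K(X) = K(Y) = \operatorname{Frac}(B)$, and restriction to the generic point of $X$ gives an inclusion $A \hookrightarrow \operatorname{Frac}(B)$ (injective because $X$ is integral and $f^{-1}(V)$ is dense in $X$). Under this inclusion the structure map $B \to A$ becomes the tautological inclusion $B \hookrightarrow \operatorname{Frac}(B)$, so in particular $\cO_Y \to f_\ast\cO_X$ is injective. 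Now $A$ is a finite $B$-module, so every element of $A$ is integral over $B$; since $A \subseteq \operatorname{Frac}(B)$ and $B$ is integrally closed in its fraction field ($Y$ being normal), we conclude $A = B$. Letting $V$ range over an affine cover of $Y$ yields $f_\ast\cO_X = \cO_Y$, i.e.\ $f$ is $\cO$-connected. Equivalently, one may phrase the argument through the Stein factorization $X \to Y' \to Y$: the map $Y' \to Y$ is finite and birational onto the normal variety $Y$, hence an isomorphism, and $\cO_{Y'} = f_\ast\cO_X$.

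I do not expect a genuine obstacle here, as this is a standard consequence of normality together with properness; the only point needing a little care is the identification $A \hookrightarrow K(Y)$ and the compatibility of the structure map $B \to A$ with it, which rests on $X$ being integral (so sections over the dense open $f^{-1}(V)$ inject into the generic stalk $K(X)$) and on birationality (so $K(X)$ is canonically $K(Y)$ as a $B$-algebra). Note that the hypothesis that $X$ is normal is not needed beyond its consequence that $X$ is integral, and projectivity is used only to guarantee that $f$ is proper.
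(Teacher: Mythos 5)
Your argument is correct and is essentially the argument the paper invokes by citing Hartshorne, Corollary III.11.4: pass to an affine open $V=\Spec B$ of $Y$, observe via properness that $A=\Gamma(f^{-1}(V),\cO_X)$ is a finite $B$-module contained in $\operatorname{Frac}(B)$ (using birationality and integrality of $X$), and conclude $A=B$ from normality of $Y$; your remark that this is equivalent to the Stein-factorization phrasing is also the standard one. No gaps.
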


\begin{proof}
See the proof of \cite[Corollary 11.4]{hartshorne-algebraic}.
\end{proof}

\begin{fact}\label{brion}
Suppose $X\xrightarrow{\pi} Y$ factors as $X\xrightarrow{j} Z\xrightarrow{\pi'} Y$, where $X\to Z$ is a closed immersion with ideal sheaf $\mathcal{I}$ and $Z\to Y$ is a $\mathbb{P}^1$-bundle. Then $R^i \pi_* \cO_X = 0$ for all $i>0.$
\end{fact}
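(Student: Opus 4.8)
The plan is to work Zariski-locally on $Y$ and analyze the cohomology of line bundles on the $\mathbb{P}^1$-bundle $Z \to Y$ restricted to the closed subscheme $X$. First I would note that the statement is local on $Y$, so I may assume $Z = \mathbb{P}(\mathcal{E})$ for a rank-$2$ bundle $\mathcal{E}$ on $Y$, with tautological sub-line-bundle $\cO_Z(-1) \hookrightarrow \pi'^\ast \mathcal{E}$. The key object is the ideal sheaf $\mathcal{I} \subseteq \cO_Z$ of $X$ inside $Z$. Since $X \to Y$ is a $\mathbb{P}^1$-bundle in the sense relevant here — wait, $X$ need not be a bundle over $Y$; what is given is only that $X \hookrightarrow Z$ is a closed immersion. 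The essential point is to control the restriction of $\mathcal{I}$ to the fibers of $\pi'$: on each fiber $\mathbb{P}^1_y$, the subscheme $X_y = X \cap \mathbb{P}^1_y$ is either all of $\mathbb{P}^1_y$, a proper nonempty closed subscheme (hence a nonempty effective divisor, so $\mathcal{I}|_{\mathbb{P}^1_y} \cong \cO_{\mathbb{P}^1}(-m_y)$ for some $m_y \geq 0$), or empty.

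The main step is then to run the Leray spectral sequence for $\pi = \pi' \circ j$, i.e.\ to compute $R^i\pi'_\ast(j_\ast \cO_X) = R^i \pi'_\ast \cO_X$ using the exact sequence
$$
0 \longrightarrow \mathcal{I} \longrightarrow \cO_Z \longrightarrow j_\ast \cO_X \longrightarrow 0.
$$
Applying $R^\bullet \pi'_\ast$, and using $R^0\pi'_\ast \cO_Z = \cO_Y$, $R^{>0}\pi'_\ast \cO_Z = 0$ (standard for a $\mathbb{P}^1$-bundle), it suffices to show $R^i \pi'_\ast \mathcal{I} = 0$ for $i \geq 2$ and that $R^1\pi'_\ast \cO_Z \to R^1\pi'_\ast(j_\ast\cO_X)$, i.e.\ $0 \to R^1\pi'_\ast \cO_X$, is the relevant map — more precisely the long exact sequence gives $R^i\pi'_\ast(j_\ast \cO_X) \cong R^{i+1}\pi'_\ast \mathcal{I}$ for $i \geq 1$. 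So the whole problem reduces to showing $R^j \pi'_\ast \mathcal{I} = 0$ for all $j \geq 2$. Since the fibers of $\pi'$ are $1$-dimensional, $R^j\pi'_\ast \mathcal{I} = 0$ automatically for $j \geq 2$ by Grothendieck's theorem on cohomological dimension of fibers. That is the crux: the $\mathbb{P}^1$-bundle hypothesis forces vanishing of $R^{\geq 2}$ of \emph{any} coherent sheaf, hence $R^{i}\pi_\ast \cO_X = R^{i+1}\pi'_\ast \mathcal{I} = 0$ for all $i \geq 1$.

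I expect the routine bookkeeping of the long exact sequence to be the bulk of the writing, but the genuine content — and the only place any hypothesis is really used — is the observation that $\pi'$ has $1$-dimensional fibers so that $R^{\geq 2}\pi'_\ast$ kills everything; the closed-immersion part is only needed so that $R\pi_\ast \cO_X = R\pi'_\ast(j_\ast\cO_X)$ with $j_\ast$ exact. There is no real obstacle; the one thing to be careful about is that I must \emph{not} claim $\pi$ itself is $\cO$-connected (the fact only asserts vanishing of higher direct images, not that $\cO_Y \to \pi_\ast\cO_X$ is an isomorphism), which is consistent with $X$ possibly being, e.g., a multisection or a union of sections of $Z \to Y$.
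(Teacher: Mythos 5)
Your proof is correct and is essentially the paper's argument: use the ideal-sheaf sequence $0\to\mathcal{I}\to\cO_Z\to j_*\cO_X\to 0$, push forward along $\pi'$, and kill the relevant terms using $R^{>0}\pi'_*\cO_Z=0$ and the fact that $\pi'$ has one-dimensional fibers so $R^{\geq 2}\pi'_*$ vanishes on everything. The only cosmetic difference is that the paper handles $i>1$ directly from the fiber dimension of $\pi$ rather than routing it through $\mathcal{I}$, but this is the same idea.
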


\begin{proof}  This argument may be found in \cite[\S2.1]{brion-lectures}; we summarize it here for convenience. The statement holds for $i>1$ since all fibers of $\pi$ have dimension at most 1.  For $i=1$, the exact sequence $0\to \mathcal{I} \to \cO_Z \to j_* \cO_X \to 0$ yields the following portion of a long exact sequence.
$$\cdots \to R^1\pi'_* \cO_Z \to R^1 \pi_*\cO_X \to R^2 \pi'_*\mathcal{I} \to \cdots$$
The first term is $0$ since $\pi'$ is a $\mathbb{P}^1$-bundle, and the third term is $0$ since all fibers of $\pi'$ have dimension 1. So the middle term is $0$.
\end{proof}

In what follows, we fix the following notation. If $A,B \subseteq \ZZ$ are finite sets, we write $A < B$ to mean $\max A < \min B$. If $\ab, \bb$ are two nests of sets, and $j$ is an index with $0 < j < s$, we denote the nested sets obtained by removing the $j$th set as follows.

$$A_{j}^\bu = (A^{i_0} \supset \cdots \supset \widehat{A^{i_j}} \supset \cdots \supset A^{i_s}), \qquad
B_{j}^\bu = (B^{i_0} \supset \cdots \supset \widehat{B^{i_j}} \supset \cdots \supset B^{i_s}).$$

\begin{lemma} \label{lem:GrassBundle2}
Let $S$ be a smooth variety, with a rank $d$ vector bundle $\cH$ and two complete flags $\cpb,\cqb$.
Suppose that $\ab, \bb, j$ are as above, and that we have both $$(A^{i_{j-1}}\setminus A^{i_j}) >(A^{i_j}\setminus A^{i_{j+1}}) \text{ and }
(B^{i_{j-1}}\setminus B^{i_j}) > (B^{i_j}\setminus B^{i_{j+1}}).$$
Then $R_{\ab,\bb} \rightarrow R_{\ab_j, \bb_j}$ is a cohomological equivalence, and it is a fiber bundle with smooth irreducible fibers.
\end{lemma}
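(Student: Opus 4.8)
The plan is to realize $R_{\ab,\bb}\to R_{\ab_j,\bb_j}$ as the restriction of a Grassmannian bundle and then quote Fact~\ref{fact:grassBundle}. Let
$$f\colon \Fl(i_0,\ldots,i_s;\cH)\longrightarrow \Fl(i_0,\ldots,\widehat{i_j},\ldots,i_s;\cH)$$
be the forgetful morphism dropping the stratum $\cV^{i_j}$. Over a point of the target whose flag is $\bigl(\cdots\supseteq V^{i_{j-1}}\supseteq V^{i_{j+1}}\supseteq\cdots\bigr)$, the fibre of $f$ parametrizes the subspaces $V^{i_j}$ of codimension $i_j$ in $\cH$ with $V^{i_{j-1}}\supseteq V^{i_j}\supseteq V^{i_{j+1}}$, that is, the Grassmannian $\Gr\bigl(i_{j+1}-i_j,\,V^{i_{j-1}}/V^{i_{j+1}}\bigr)$. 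More precisely, $f$ identifies $\Fl(i_0,\ldots,i_s;\cH)$ with the Grassmannian bundle $\Gr\bigl(i_{j+1}-i_j,\,\cV^{i_{j-1}}/\cV^{i_{j+1}}\bigr)$ over $\Fl(i_0,\ldots,\widehat{i_j},\ldots,i_s;\cH)$, where $\cvb$ is the tautological flag on the target; in particular $f$ is a fibre bundle with smooth irreducible fibres.

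Next I would show that the hypothesis forces $R_{\ab,\bb}=f^{-1}(R_{\ab_j,\bb_j})$ scheme-theoretically. Deleting the $j$th set affects only the block of the decreasing completion occupying positions $i_{j-1},\ldots,i_{j+1}-1$: for $\ab$ this block consists of $A^{i_{j-1}}\setminus A^{i_j}$ in decreasing order followed by $A^{i_j}\setminus A^{i_{j+1}}$ in decreasing order, and the hypothesis $(A^{i_{j-1}}\setminus A^{i_j})>(A^{i_j}\setminus A^{i_{j+1}})$ says precisely that this concatenation is $A^{i_{j-1}}\setminus A^{i_{j+1}}$ in decreasing order, which is the corresponding block of $\sigma(\ab_j)$; likewise for $\bb$. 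Hence $\sigma:=\sigma(\ab)=\sigma(\ab_j)$ and $\tau:=\sigma(\bb)=\sigma(\bb_j)$, and both $\sigma$ and $\tau$ are decreasing on $\{i_{j-1},\ldots,i_{j+1}-1\}$; in particular $\sigma(i_j-1)>\sigma(i_j)$, so $\Ess(\sigma)$ contains no pair with first coordinate $i_j$, and the same holds for $\tau$. By the essential-set reduction (Fact~\ref{fact:essSet}, applied to $X_\ab(\cpb)=D_\sigma(\cvb;\pi^\ast\cpb)$ and $X_\bb(\cqb)=D_\tau(\cvb;\pi^\ast\cqb)$, or after pulling back to the complete flag variety), the subschemes $X_\ab(\cpb)$ and $X_\bb(\cqb)$ of $\Fl(i_0,\ldots,i_s;\cH)$ are cut out by rank conditions involving only the strata $\cV^{i_{j'}}$ with $j'\neq j$. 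Since those strata, as well as the flags $\pi^\ast\cpb$ and $\pi^\ast\cqb$, are pulled back along $f$, these conditions are exactly the $f$-pullbacks of the defining conditions of $X_{\ab_j}(\cpb)$ and $X_{\bb_j}(\cqb)$, which are well defined with the same permutations $\sigma,\tau$. Therefore $X_\ab(\cpb)=f^{-1}\bigl(X_{\ab_j}(\cpb)\bigr)$ and $X_\bb(\cqb)=f^{-1}\bigl(X_{\bb_j}(\cqb)\bigr)$, and intersecting gives $R_{\ab,\bb}=f^{-1}(R_{\ab_j,\bb_j})$.

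Since base change of a Grassmannian bundle along a subscheme of its base is again a Grassmannian bundle, $R_{\ab,\bb}=f^{-1}(R_{\ab_j,\bb_j})\to R_{\ab_j,\bb_j}$ is the Grassmannian bundle $\Gr\bigl(i_{j+1}-i_j,\,(\cV^{i_{j-1}}/\cV^{i_{j+1}})|_{R_{\ab_j,\bb_j}}\bigr)$; it is a fibre bundle with smooth irreducible fibres, and by Fact~\ref{fact:grassBundle} a cohomological equivalence. The step I expect to be the main obstacle is the scheme-theoretic identity $R_{\ab,\bb}=f^{-1}(R_{\ab_j,\bb_j})$: the inclusion $\subseteq$ is immediate, and the reverse inclusion of underlying sets is an elementary dimension count using the flag inclusions, but upgrading this to an equality of schemes is exactly what the essential-set reduction delivers (alternatively, when $\cpb,\cqb$ are versal, $R_{\ab_j,\bb_j}$ is reduced by Theorem~\ref{thm:RabLocal}, hence so is $f^{-1}(R_{\ab_j,\bb_j})$ as $f$ is flat with smooth fibres, so set-theoretic equality then suffices).
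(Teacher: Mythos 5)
Your proof is correct and takes essentially the same route as the paper: identify the forgetful morphism $\Fl(i_0,\ldots,i_s;\cH)\to\Fl(i_0,\ldots,\widehat{i_j},\ldots,i_s;\cH)$ as a Grassmannian bundle, use the hypothesis to deduce that neither $\Ess(\sigma(\ab))$ nor $\Ess(\sigma(\bb))$ contains a pair with first coordinate $i_j$ (which the paper delegates to Remark~\ref{rem:essentialDescents}), conclude that $R_{\ab,\bb}$ is the scheme-theoretic pullback of $R_{\ab_j,\bb_j}$, and quote Fact~\ref{fact:grassBundle}. Your write-up is more detailed — in particular you make explicit that $\sigma(\ab)=\sigma(\ab_j)$ and $\sigma(\bb)=\sigma(\bb_j)$, which the paper leaves implicit — and your observation that the essential-set reduction yields scheme-theoretic (not merely set-theoretic) equality without any reducedness hypothesis is exactly the right point to emphasize, since the lemma as stated does not assume versality.
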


\noindent For example, let $\ab = \bb = (\{0,1,2,3\}  \supset \{0,1\} \supset \emptyset)$, and $j=1$.  Then $R_{\ab,\bb} \rightarrow R_{A_j^\bu, B_j^\bu}$ is a Grassmannian $\mathbb{G}(1,\PP^3)$ over a point. 

\begin{proof}[Proof of Lemma~\ref{lem:GrassBundle2}]
Observe that $\Ess(\sigma(\ab),\sigma(\bb))$ does not contain $(i_j, b)$ for any value of $b$ (see Remark \ref{rem:essentialDescents}). So $\Rab$ and $R_{A_j^\bu, B_j^\bu}$ are defined by the same rank conditions, and thus $\Rab$ is the inverse image of $R_{A_j^\bu, B_j^\bu}$ under a forgetful morphism of partial flag varieties. The Lemma now follows from Fact~\ref{fact:grassBundle}.
\end{proof}

Applying Lemma \ref{lem:GrassBundle2} repeatedly gives the following.

\begin{cor} \label{cor:sigmaGood}
Let $\cP^\bu, \cQ^\bu$ be complete flags in a vector bundle $\cH$ on a smooth scheme $S$. For any choice of $\ab, \bb$,
the forgetful morphism
$$R_{\sigma(\ab), \sigma(\bb)} \rightarrow R_{\ab, \bb}$$
is a cohomological equivalence with smooth irreducible fibers.
\end{cor}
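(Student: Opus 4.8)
The plan is to exhibit the forgetful morphism $R_{\sigma(\ab),\sigma(\bb)}\to R_{\ab,\bb}$ as a finite composite of forgetful morphisms, each deleting a single intermediate corank, to check that Lemma~\ref{lem:GrassBundle2} applies to every one of them, and then to combine the results using the fact (recalled just before Theorem~\ref{thm:rabgood}) that a composite of cohomological equivalences of proper morphisms is again a cohomological equivalence. As a preliminary observation, since $\Ess(\sigma(\ab))\subseteq\{i_0,\ldots,i_s\}\times[d]$ (Remark~\ref{rem:essentialDescents}) and likewise for $\bb$, the scheme $R_{\sigma(\ab),\sigma(\bb)}=X_{\sigma(\ab)}(\cpb)\cap X_{\sigma(\bb)}(\cqb)$ is exactly the preimage of $R_{\ab,\bb}=X_{\ab}(\cpb)\cap X_{\bb}(\cqb)$ under the forgetful morphism $\Fl(\cH)\to\Fl(i_0,\ldots,i_s;\cH)$; the latter being a tower of Grassmannian bundles, so is the former, which already yields the ``fiber bundle with smooth irreducible fibers'' assertion, and it remains only to prove the cohomological equivalence.

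To build the tower, write $\sigma=\sigma(\ab)$, $\tau=\sigma(\bb)$, let $A^\bu_\sigma$ denote the complete nest with decreasing completion $\sigma$ (so its corank-$c$ stratum is $\{\sigma(c),\sigma(c+1),\ldots,\sigma(d-1)\}$), and similarly $B^\bu_\tau$. Fix a chain $\{i_0,\ldots,i_s\}=C_0\subsetneq C_1\subsetneq\cdots\subsetneq C_M=\{0,1,\ldots,d\}$ with $|C_k|=|C_{k-1}|+1$, and for each $k$ let $\ab_{(k)}$ (resp.\ $\bb_{(k)}$) be the nest obtained from $A^\bu_\sigma$ (resp.\ $B^\bu_\tau$) by keeping only the strata with corank in $C_k$. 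Because $A^\bu_\sigma$ already peels off the elements within each original $\ab$-block in decreasing order, one has $\sigma(\ab_{(k)})=\sigma$ and $\sigma(\bb_{(k)})=\tau$ for all $k$; thus $\ab_{(0)}=\ab$, $\ab_{(M)}=A^\bu_\sigma$, and $R_{\sigma,\tau}\to R_{\ab,\bb}$ is the composite over $k=1,\ldots,M$ of the forgetful maps $R_{\ab_{(k)},\bb_{(k)}}\to R_{\ab_{(k-1)},\bb_{(k-1)}}$, each deleting the corank $c_k$ determined by $C_k\setminus C_{k-1}=\{c_k\}$.

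The heart of the matter is to verify the hypothesis of Lemma~\ref{lem:GrassBundle2} for each step. Since $0,d\in C_{k-1}$ and $c_k$ is not one of the original coranks $i_0,\ldots,i_s$, the corank $c_k$ lies strictly between two consecutive coranks $c^-<c^+$ of $C_{k-1}$ belonging to the same original block, say $i_{j-1}\le c^-<c_k<c^+\le i_j$. Then the two sets whose comparison Lemma~\ref{lem:GrassBundle2} requires (with $i_{j-1},i_j,i_{j+1}$ there equal to $c^-,c_k,c^+$ here) are $\{\sigma(c^-),\ldots,\sigma(c_k-1)\}$ and $\{\sigma(c_k),\ldots,\sigma(c^+-1)\}$; since all the relevant positions lie in $[i_{j-1},i_j-1]$, on which $\sigma$ is strictly decreasing by definition of the decreasing completion, every element of the first set exceeds every element of the second, i.e.\ $(A^{c^-}\setminus A^{c_k})>(A^{c_k}\setminus A^{c^+})$, and the identical argument applies to $\bb$ and $\tau$. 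Hence Lemma~\ref{lem:GrassBundle2} applies at every step, each step is a cohomological equivalence, and therefore so is the composite $R_{\sigma(\ab),\sigma(\bb)}\to R_{\ab,\bb}$. I expect the only genuine obstacle to be precisely this combinatorial bookkeeping --- making explicit that truncating a decreasing completion and recompleting recovers it, so that every intermediate cut inside an original block has its larger entries above and its smaller entries below --- while the rest is formal manipulation of facts already established.
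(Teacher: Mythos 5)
Your proposal is correct and follows exactly the paper's approach: the paper's proof is the single line ``Applying Lemma~\ref{lem:GrassBundle2} repeatedly gives the following,'' and what you have done is spell out that iteration carefully, inserting one corank at a time and verifying, via the fact that the decreasing completion $\sigma$ is strictly decreasing on each block $[i_{j-1},i_j-1]$, that the hypothesis of Lemma~\ref{lem:GrassBundle2} holds at every step. The preliminary observation using Remark~\ref{rem:essentialDescents} to get the fiber-bundle structure is also consistent with the paper's setup.
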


\begin{lemma}\label{lem:board-l4}
Let $\cP^\bu, \cQ^\bu$ be versal flags in a vector bundle $\cH$ on a smooth scheme $S$. 
Given $\ab, \bb$, define $\ab_j$ and $\bb_j$ as in Lemma \ref{lem:GrassBundle2}. Suppose further that $i_{j+1} = i_{j-1}+2$, and either
 $$(A^{i_{j-1}}\setminus A^{i_j}) >(A^{i_j}\setminus A^{i_{j+1}}) \text{ or }
(B^{i_{j-1}}\setminus B^{i_j}) > (B^{i_j}\setminus B^{i_{j+1}}).$$
Then the forgetful morphism $\pi\colon \Rab \to R_{\ab_j, \bb_j}$ is a cohomological equivalence.
\end{lemma}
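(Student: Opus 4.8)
The plan is to realize $\pi$ as the restriction of a $\PP^1$-bundle, deduce the vanishing of higher direct images from Fact~\ref{brion}, and obtain $\cO$-connectedness from a dimension count together with the normality supplied by Theorem~\ref{thm:RabLocal}.

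If \emph{both} displayed inequalities hold, this is already Lemma~\ref{lem:GrassBundle2}, so assume exactly one does; since the statement is symmetric under interchanging $(\ab,\cpb)$ with $(\bb,\cqb)$, assume it is the one for $\ab$. Because $i_{j+1}=i_{j-1}+2$, the forgetful morphism $f\colon\Fl(i_0,\ldots,i_s;\cH)\to\Fl(i_0,\ldots,\widehat{i_j},\ldots,i_s;\cH)$ is a $\PP^1$-bundle (the projectivization of $\cV^{i_{j-1}}/\cV^{i_{j+1}}$). As in the proof of Lemma~\ref{lem:GrassBundle2}, the inequality on $\ab$ means $\Ess(\sigma(\ab))$ contains no pair of the form $(i_j,b)$, so by Fact~\ref{fact:essSet} the corank-$i_j$ conditions relative to $\cpb$ are redundant and $X_{\ab}(\cpb)=f^{-1}(X_{\ab_j}(\cpb))$. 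Hence $\Rab=f^{-1}(X_{\ab_j}(\cpb))\cap X_{\bb}(\cqb)$ is a closed subscheme of $Z:=f^{-1}(R_{\ab_j,\bb_j})$, which is a $\PP^1$-bundle over $R_{\ab_j,\bb_j}$, and $\pi$ is the restriction of this bundle to $\Rab$. Fact~\ref{brion} then gives $R^i\pi_*\cO_{\Rab}=0$ for all $i>0$, with no further hypotheses needed.

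For $\cO$-connectedness, I would first note that the assertion is local on $R_{\ab_j,\bb_j}$ and that the irreducible components of the normal scheme $R_{\ab_j,\bb_j}$ are disjoint, so one may reduce to the case where $R_{\ab_j,\bb_j}$ is irreducible and nonempty. Then I would analyze the fibers of $\pi$: the fiber over a point $y$ sits inside $\PP\big((\cV^{i_{j-1}}/\cV^{i_{j+1}})_y\big)\cong\PP^1$ and is cut out by the corank-$i_j$ rank conditions relative to $\cqb$, which become Schubert conditions on a point of this $\PP^1$ with respect to the flag induced by $\cqb$; comparing each such condition with the corank-$i_{j-1}$ and corank-$i_{j+1}$ conditions (which hold at $y$) shows that each is either vacuous or forces the point to be one fixed line — the same line for all relevant $b$, since the $\cQ^b$ are nested. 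Thus every fiber of $\pi$ is $\PP^1$ or a single reduced point; in particular $\pi$ is surjective with geometrically connected fibers, which, as $R_{\ab_j,\bb_j}$ is irreducible, forces $\Rab$ to be irreducible as well.

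Finally, the definition of the decreasing completion gives $\sigma(\ab)=\sigma(\ab_j)$ and $\inv(\sigma(\bb_j))=\inv(\sigma(\bb))+1$, so the pure codimensions from Theorem~\ref{thm:RabLocal}, together with $\dim\Fl(i_0,\ldots,i_s;\cH)=\dim\Fl(i_0,\ldots,\widehat{i_j},\ldots,i_s;\cH)+1$, yield $\dim\Rab=\dim R_{\ab_j,\bb_j}$. Hence the generic fiber of $\pi$ is a reduced point, so $\pi$ is birational; a proper birational morphism from a reduced irreducible scheme onto a normal one is $\cO$-connected (by Stein factorization, the finite factor is birational, hence an isomorphism). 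Combined with the vanishing of higher direct images, this shows $\pi$ is a cohomological equivalence. The main obstacle is the fiber analysis in the third paragraph: one must check that over \emph{every} point — not just the transverse or generic ones — the corank-$i_j$ conditions relative to $\cqb$ stay satisfiable, so that $\pi$ is surjective. This is a finite comparison among the rank inequalities at $i_{j-1}$, $i_j$, and $i_{j+1}$, but it cannot be skipped, since a non-surjective $\pi$ could not be $\cO$-connected.
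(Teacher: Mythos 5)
Your factorization of $\pi$ through a $\PP^1$-bundle (the closed immersion $\Rab \hookrightarrow Z$ followed by the $\PP^1$-bundle $Z \to R_{\ab_j,\bb_j}$) and the appeal to Fact~\ref{brion} for $R^i\pi_*\cO_{\Rab}=0$, $i>0$, match the paper exactly (the paper names your $Z$ as $R_{\ab,C^\bu}$ for an explicit nest $C^\bu$ obtained by swapping two elements of $\bb$). The gap is in $\cO$-connectedness, and you flag it yourself at the end: your route requires showing that $\pi$ is surjective by checking that the fiber over \emph{every} point of $R_{\ab_j,\bb_j}$ is nonempty, and you do not carry this out. This is not a routine verification: over a non-transverse point of $R_{\ab_j,\bb_j}$ the corank-$i_j$ rank conditions relative to $\cqb$ can impose several a priori incompatible linear conditions on the $\PP^1$-fiber, and one must argue they remain jointly satisfiable. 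A secondary issue: the inference ``$\pi$ surjective with connected fibers onto an irreducible base $\Rightarrow$ source irreducible'' is false as stated (e.g.\ $V(xy)\to\mathbb{A}^1$ by projection); you would also need to invoke normality of $\Rab$ from Theorem~\ref{thm:RabLocal} to promote connectedness of $\Rab$ to irreducibility.

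The paper sidesteps the fiber analysis entirely by using Fact~\ref{fact:oconn}: a birational morphism of normal, irreducible, projective varieties is $\cO$-connected. It therefore only needs a dense open over which $\pi$ is an isomorphism. Using the last assertion of Theorem~\ref{thm:RabLocal} ($\widetilde{R}_{\ab,\bb}$ dense in $\Rab$, $\widetilde{R}_{\ab_j,\bb_j}$ dense in $R_{\ab_j,\bb_j}$), one observes that over $\widetilde{R}_{\ab_j,\bb_j}$ --- where the rank functions are exactly $r^{\sigma(\ab_j)}$ and $r^{\sigma(\bb_j)}$, so the linear algebra is transparent --- there is a unique $V^{i_j}$ completing the flag, i.e.\ $\pi$ restricts to an isomorphism $\widetilde{R}_{\ab,\bb}\to\widetilde{R}_{\ab_j,\bb_j}$. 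This yields irreducibility of $\Rab$ (a normal variety with irreducible dense open) and birationality of $\pi$ simultaneously, with no dimension count and no case-by-case analysis of special fibers. Replacing your surjectivity-and-dimension argument with this tilde-locus argument closes the gap and brings your proof into agreement with the paper's.
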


\begin{proof}
If both of these conditions hold, then the result follows from Lemma \ref{lem:GrassBundle2}. Therefore we may assume without loss of generality that $i_{j+1} = i_{j-1}+2$ and that 
 $$(A^{i_{j-1}}\setminus A^{i_j}) >(A^{i_j}\setminus A^{i_{j+1}}) \text{ and }
(B^{i_{j-1}}\setminus B^{i_j}) < (B^{i_j}\setminus B^{i_{j+1}}).$$
Writing $\{b\} = B^{i_{j-1}}\setminus B^{i_{j}}$ and $\{c\} = B^{i_{j}}\setminus B^{i_{j+1}}$, we have $b<c$. Define $C^\bu$ by ``exchanging $b$ and $c$,''  in other words $C^{i_j} = C^{i_{j+1}} \cup \{b\}$ and $C^\bu$ is otherwise the same as $\bb$. Notice that $\pi$ factors as $\Rab \to R_{\ab,C^\bu} \to \Rabj,$ where the first morphism is a closed immersion, and the second is the structure map for a $\PP^1$-bundle,
by the argument in Lemma \ref{lem:GrassBundle2}.  This implies that $R^i \pi_* \cO_{\Rab} = 0$ for $i>0$ (Fact~\ref{brion}). It remains to verify that $\pi$ is $\cO$-connected. The statement is local in $R_{\ab_j, \bb_j}$, so we assume that $R_{\ab_j,\bb_j}$ is connected. Since it is also normal (Theorem \ref{thm:RabLocal}), we may assume that $R_{\ab_j,\bb_j}$ is irreducible. By Fact \ref{fact:oconn}, it suffices to check that $\pi$ is birational. The open subscheme $\widetilde{R}_{\ab, \bb}$ is dense in $\Rab$ by Theorem~\ref{thm:RabLocal}, and likewise $\widetilde{R}_{\ab_j,\bb_j}$ is dense in $R_{\ab_j,\bb_j}$. At any point of $\widetilde{R}_{\ab_j,\bb_j}$, corresponding to flags $V^\bu, P^\bu, Q^\bu$ in $H$, there is a unique choice of subspace $V^{i_j}$ strictly between $V^{i_{j-1}}$ and $V^{i_{j+1}}$ which can be added to the flag $V^\bu$ to produce a point of $\Rab$.  More precisely, over $\widetilde{R}_{A^\bu_j, B^\bu_j}$, the morphism $\pi$ is a structure morphism for a relative Schubert subvariety of a $\PP^1$-bundle consisting of all relative flags in this $\PP^1$-bundle  that {\em coincide with} a fixed flag.
This shows that in fact $\pi$ restricts to an isomorphism $\widetilde{R}_{\ab, \bb} \rightarrow \widetilde{R}_{A^\bu_j, B^\bu_j}$, which shows that $\pi$ is indeed birational, which completes the proof.
\end{proof}

\begin{lemma}\label{lem:goodMoves}
Suppose $\sigma,\tau$ is a pair of permutations, and $s$ is a simple transposition. Assume that $\cP^\bu, \cQ^\bu$ are versal. Then $R_{\sigma \star s, \tau} \to S$ and $R_{\sigma, \tau \star s} \to S$ have the same image $S'$, and if one is a cohomological equivalence then so is the other.
\end{lemma}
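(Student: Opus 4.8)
The plan is to exhibit both $R_{\sigma\star s,\tau}\to S$ and $R_{\sigma,\tau\star s}\to S$ as a forgetful morphism onto one and the same relative Richardson variety in a coarser partial flag bundle, followed by the projection of that variety to $S$, and then to feed this into Lemma~\ref{lem:board-l4} and the composition property of cohomological equivalences recalled at the start of \S\ref{ssec:coh}.

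Let $s$ be the simple transposition exchanging positions $p$ and $p+1$ in one-line notation. Working with the complete nests associated to permutations via the bijection of \S\ref{ssec:perms} (so that, for a complete nest, the corank-$i$ stratum minus the corank-$(i+1)$ stratum is the singleton occupying position $i$), I would first record two elementary facts. First, for any $\rho$ the permutation $\rho\star s$ has a descent at $p$: if $\rho$ already does then $\rho\star s=\rho$ by Equation~\eqref{eq:star-recursion}, and otherwise $\rho\star s=\rho s$, which turns the ascent at $p$ into a descent. In nest language this says precisely that, for the complete nest of $\rho\star s$, the corank-$p$ stratum minus the corank-$(p+1)$ stratum is $>$ (in the sense of \S\ref{ssec:perms}) the corank-$(p+1)$ stratum minus the corank-$(p+2)$ stratum, since both differences are the singletons $\{(\rho\star s)_p\}$ and $\{(\rho\star s)_{p+1}\}$. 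Second, since $\rho\star s$ agrees with $\rho$ away from positions $p,p+1$ and has the same unordered pair of entries there, the complete nest of $\rho\star s$ with its corank-$(p+1)$ stratum deleted coincides with that of $\rho$ with the same stratum deleted. Writing $j$ for the index with $i_j=p+1$ (so $0<j<s$ and $i_{j+1}=i_{j-1}+2$) and letting $\ab_j,\bb_j$ denote the complete nests of $\sigma$ and of $\tau$ with their corank-$(p+1)$ strata deleted, the second fact gives that $\sigma\star s$ also deletes to $\ab_j$ and $\tau\star s$ also deletes to $\bb_j$.

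Now comes the main step. By the first fact, the $A$-side inequality hypothesis of Lemma~\ref{lem:board-l4} holds for the pair of complete nests defining $R_{\sigma\star s,\tau}$; together with $i_{j+1}=i_{j-1}+2$, versality of $\cpb,\cqb$, and smoothness of $S$, Lemma~\ref{lem:board-l4} gives that the forgetful morphism $f_1\colon R_{\sigma\star s,\tau}\to R_{\ab_j,\bb_j}$ is a cohomological equivalence. Applying the first fact to $\tau\star s$ instead, the $B$-side inequality holds for the pair defining $R_{\sigma,\tau\star s}$, so the forgetful morphism $f_2\colon R_{\sigma,\tau\star s}\to R_{\ab_j,\bb_j}$ is also a cohomological equivalence. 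Let $g\colon R_{\ab_j,\bb_j}\to S$ be the projection; it is proper, being a closed subscheme of a flag bundle over $S$. By the composition property, $g$ is a cohomological equivalence if and only if $g\circ f_1=(R_{\sigma\star s,\tau}\to S)$ is, and likewise if and only if $g\circ f_2=(R_{\sigma,\tau\star s}\to S)$ is; hence either of the last two is a cohomological equivalence iff the other is. Finally, since $f_1$ is $\cO$-connected we have $(g\circ f_1)_\ast\cO_{R_{\sigma\star s,\tau}}=g_\ast\cO_{R_{\ab_j,\bb_j}}$, so the scheme-theoretic image of $R_{\sigma\star s,\tau}\to S$ equals that of $g$; the same holds for $R_{\sigma,\tau\star s}\to S$, so both images coincide with the image $S'$ of $g$.

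I expect no serious obstacle here, since all of the geometric input is already packaged into Lemma~\ref{lem:board-l4}; the only delicate points are the two combinatorial observations above—translating Equation~\eqref{eq:star-recursion} into the set-theoretic inequality that Lemma~\ref{lem:board-l4} requires, and verifying that the two forgetful morphisms genuinely target the \emph{same} coarse relative Richardson variety $R_{\ab_j,\bb_j}$—which are routine once set up carefully.
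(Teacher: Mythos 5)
Your proof is correct, and it uses the same core ingredients as the paper's argument — factoring both relative Richardson varieties through the coarse variety $R_{\ab_j,\bb_j}$, applying Lemma~\ref{lem:board-l4} to show the forgetful maps are cohomological equivalences, and invoking the composition property of cohomological equivalences — but you have streamlined the paper's structure. The paper splits into three cases: $\sigma\star s=\sigma$, $\tau\star s=\tau$, and neither; in the third case it performs a two-step reduction by inserting the intermediate variety $R_{\sigma\star s,\tau\star s}$ and applying the first two cases twice. Your observation that the $A$-side complete nest of $R_{\sigma\star s,\tau}$ and the $B$-side complete nest of $R_{\sigma,\tau\star s}$ \emph{always} have a descent at the relevant position (regardless of whether the Demazure product changes the permutation), combined with the ``either/or'' form of the hypothesis in Lemma~\ref{lem:board-l4}, collapses the casework into a single uniform argument; the coincidence of the two targets is secured by your second combinatorial remark that deleting the corank-$(p+1)$ stratum from the nest of $\rho$ or of $\rho\star s$ yields the same partial nest. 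The only point to make sure is phrased carefully in a final write-up is that ``cohomological equivalence'' for the maps $g$, $g\circ f_1$, $g\circ f_2$ should be interpreted as maps onto the common scheme-theoretic image $S'$ rather than onto all of $S$ — you implicitly do this, matching the paper's intended reading.
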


\begin{proof}
First, consider the case where $\sigma \star s = \sigma$. We may assume that $\tau \star s \neq \tau$. Let $j$ be the index such that $s$ transposes $j$ and $j-1$. By Fact \ref{fact:demazure}, $\sigma_{j-1} > \sigma_j$, $\tau_{j-1} < \tau_j$, and $\tau \star s = \tau s$. Let $\ab,\bb,C^\bu$ be the complete nests of sets associated to $\sigma,\tau$, and $\tau s$, respectively. 

Both $R_{\sigma,\tau} = \Rab$ and $R_{\sigma,\tau s} = R_{\ab,C^\bu}$ have forgetful morphisms to $R_{\ab_j,\bb_j}$; we record the relevant morphisms in the commuting diagram below.

\begin{figure}[h!]
\begin{tikzcd}[column sep = tiny]
\Rab \arrow[rr,"i"] \arrow[dr,"f"{above}] \arrow[dddr, "\pi"{below}] & & R_{\ab,C^\bu} \arrow[dl,"f'"{above}] \arrow[dddl, "\pi'"]\\
 & R_{\ab_j,\bb_j} \arrow[dd,"\pi^j"] &\\
 & & \\
  & S&
\end{tikzcd}
\end{figure}
\noindent Here $i$ is a closed immersion, and $f'$ is a $\PP^1$-bundle.  The maps $f$ and $f'$ are known to be cohomological equivalences, by Lemmas~\ref{lem:GrassBundle2} and~\ref{lem:board-l4}. In particular, all three push-forwards $\pi_\ast \cO_{\Rab}$, $\pi^j_\ast \cO_{R_{\ab_j, \bb_j}}, \pi'_\ast \cO_{R_{\ab, C^\bu}}$ agree. It follows that the scheme-theoretic image of all three has the same ideal sheaf on $S$ (namely, the kernel of $\cO_S \rightarrow \pi_\ast \cO_{\Rab}$). Hence the image of all three morphisms $\pi, \pi^j, \pi'$ is the same; call this image $S'$.

All three morphisms $\pi, \pi^j, \pi'$ factor through $S'$. It follows from the Grothendieck spectral sequence that the morphism $\Rab\rightarrow S'$ is a cohomological equivalence if and only the morphism $R_{\ab_j,\bb_j} \rightarrow S'$ is a cohomological equivalence, if and only if the morphism $R_{\ab, C^\bu} \rightarrow S'$ is a cohomological equivalence. This establishes the result in the case where $\sigma \star s = \sigma$.

Next, the case where $\tau \star s = \tau$ follows from the first case by exchanging the flags. It remains to consider the case where $\tau \star s \neq \tau$ and $\sigma \star s \neq \sigma$. In fact, this case follows from the first two: by Fact \ref{fact:demazure} we have $\sigma \star s = \sigma \star s \star s = \sigma s$ and $\tau \star s = \tau \star s \star s = \tau s$, and the result follows by first relating $R_{\sigma \star s \star s, \tau}$ to $R_{\sigma \star s, \tau \star s}$ and then relating $R_{\sigma \star s, \tau \star s}$ to $R_{\sigma, \tau \star s \star s}$.
\end{proof}

\begin{lemma} \label{lem:RtoS}
Suppose that $\sigma \in S_d$. For any scheme $S$ with vector bundle $\cH$ and complete flags $\cP^\bu, \cQ^\bu$, the morphism $R_{\mathrm{id}, \sigma}(\cP^\bu, \cQ^\bu) \rightarrow S$ is a closed immersion with image $D_{\sigma}(\cP^\bu; \cQ^\bu)$.
\end{lemma}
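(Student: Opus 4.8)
The plan is to identify the relative Schubert variety $X_{\id}(\cpb)\subseteq\Fl(\cH)$ with the image of the tautological section $\iota\colon S\to\Fl(\cH)$ classifying $\cpb$, and then to read off $R_{\id,\sigma}$ by base change along $\iota$. Recall first that, unwinding Definitions~\ref{def:dst} and~\ref{loc:rab} together with the Example on relative Schubert varieties at the end of \S\ref{ssec:degen}, one has $R_{\id,\sigma}(\cpb,\cqb) = D_{\id}(\cvb;\pi^\ast\cpb)\cap D_{\sigma}(\cvb;\pi^\ast\cqb)$ as a scheme-theoretic intersection inside $\Fl(\cH)$, where $\cvb$ is the tautological flag and $\pi\colon\Fl(\cH)\to S$ the structure map. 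Observe that no smoothness of $S$ or versality hypothesis is needed, consistent with this being the base case of the induction in the proof of Theorem~\ref{thm:rabgood}.

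The first, and only substantial, step is to prove that $X_{\id}(\cpb) = D_{\id}(\cvb;\pi^\ast\cpb)$ is precisely $\iota(S)$, equivalently that the structure morphism $X_{\id}(\cpb)\to S$ is an isomorphism with inverse $\iota$. Since $X_{\id}(\cpb)$ is a closed subscheme of $\Fl(\cH)$ and the claim is local on $S$, I would pass to a Zariski-open $U\subseteq S$ on which $\cH$ admits a trivialization adapted to $\cpb$, i.e.\ in which $\cpb$ is the constant flag $F^\bu$; such a trivialization exists because the successive quotients $\cP^i/\cP^{i+1}$ are line bundles, so the flag splits locally. Then $\Fl(\cH)|_U = U\times\Fl(d)$ with $\cvb$ pulled back from the $\Fl(d)$-factor and $\pi^\ast\cpb$ constant; since $r^{\id}(a,b) = d-\max(a,b)$, each rank condition cutting out $D_{\id}(\cvb;\pi^\ast\cpb)$ is pulled back from $\Fl(d)$, so $X_{\id}(\cpb)|_U = U\times X_{\id}(F^\bu)$. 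Finally $X_{\id}(F^\bu)$ is a single reduced point: set-theoretically the only $V^\bu$ with $\dim V^a\cap F^a\ge d-a$ for all $a$ is $F^\bu$, and Schubert varieties are reduced (\S\ref{ssec:schubert}). Hence $X_{\id}(\cpb)|_U$ is the image of $\iota|_U$ and maps isomorphically to $U$; patching over $S$ gives the claim, and in particular $\iota$ is a closed immersion onto $X_{\id}(\cpb)$.

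Granting this, the conclusion is formal. Transporting along the isomorphism $\iota\colon S\to X_{\id}(\cpb)$, the scheme-theoretic intersection of $X_\sigma(\cqb) = D_\sigma(\cvb;\pi^\ast\cqb)$ with the closed subscheme $X_{\id}(\cpb)=\iota(S)$ becomes the scheme-theoretic preimage $\iota^{-1}\bigl(D_\sigma(\cvb;\pi^\ast\cqb)\bigr)$. Degeneracy loci are compatible with arbitrary base change, being defined locally by determinantal ideals, so this preimage equals $D_\sigma(\iota^\ast\cvb;\iota^\ast\pi^\ast\cqb)$; and $\iota^\ast\cvb = \cpb$ by the universal property of the section classifying $\cpb$, while $\iota^\ast\pi^\ast\cqb = \cqb$. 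Thus $R_{\id,\sigma}\cong D_\sigma(\cpb;\cqb)$ as a closed subscheme of $S$, and since $\iota$ carries the structure morphism $X_{\id}(\cpb)\to S$ to $\id_S$, it carries $R_{\id,\sigma}\to S$ to the closed immersion $D_\sigma(\cpb;\cqb)\hookrightarrow S$, as asserted.

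The main obstacle is entirely contained in the second paragraph: the fact that the \emph{a priori} overdetermined determinantal scheme $D_{\id}(\cvb;\pi^\ast\cpb)$ carries no nilpotent or embedded structure, so that it is literally the section and not a thickening of it. This is exactly the reducedness of Schubert varieties recalled in \S\ref{ssec:schubert}; with that input, together with the standard base-change compatibility of degeneracy loci, the remainder of the argument is bookkeeping.
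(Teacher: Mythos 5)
Your proof is correct, and it takes a somewhat different route from the paper's. Both arguments hinge on the same core fact — that $X_{\id}(\cpb) = D_{\id}(\cvb;\pi^\ast\cpb)$ is scheme-theoretically the image of the tautological section $\iota\colon S\to\Fl(\cH)$ classifying $\cpb$ — and both then essentially identify $R_{\id,\sigma}$ with $D_\sigma(\cpb;\cqb)$ by pulling the second set of rank conditions back along $\iota$. The difference is in how that core fact is established. The paper argues directly on the functor of points: it observes that each defining condition $\dim\cW^a\cap t^\ast\cP^a\ge d-a$ is a rank-$\le 0$ condition on a map of vector bundles, hence is literally the vanishing of all matrix entries, forcing $\cW^a=t^\ast\cP^a$; no appeal to reducedness of Schubert varieties is needed, because the relevant determinantal ideal is the ideal of $1\times 1$ minors, which is trivially the ``right'' ideal. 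You instead trivialize $\cH$ locally so $X_{\id}(\cpb)$ splits as $U\times X_{\id}(F^\bu)$ and invoke the (true, well-known, but nontrivial) fact that the determinantal scheme structure on the Schubert variety $X_{\id}(F^\bu)$ is reduced, hence the single reduced point. Both arguments are valid; the paper's is marginally more self-contained since it sidesteps the reducedness input, while yours is more explicit about the geometry ($\iota$ as a closed immersion) and perhaps more transparent about exactly where base-change compatibility of degeneracy loci is used.
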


\begin{proof}
Consider the functor of points of $R_{\mathrm{id},\sigma}$. A morphism $T \rightarrow R_{\mathrm{id},\sigma}$ corresponds to a morphism $t: T \rightarrow S$ and a complete flag $\cW^\bu$ of $t^\ast \cH$ such that the permutation associated to $(\cwb, t^\ast \cP^\bu)$ is at most $\mathrm{id}$ and the permutation associated to $(\cwb, t^\ast \cQ^\bu)$ is at most $\sigma$ (where these statements are meant scheme-theoretically, i.e., as determinantal loci). The former condition is equivalent to saying that $\cW^\bu$ is identical to $t^\ast \cP^\bu$. So in fact morphisms $T \rightarrow R_{\mathrm{id},\sigma}$ correspond to morphisms $t: T \rightarrow S$ such that $t^\ast \cP^\bu, t^\ast \cQ^\bu$ have associated permutation at least $\sigma$. But this is a description of the functor of $D_\sigma(\cP^\bu; \cQ^\bu)$. We deduce that  the morphism $R_{\mathrm{id},\sigma} \rightarrow S$ induces a bijection between morphisms $T \rightarrow R_{\sigma, \tau}$ and morphisms $T \rightarrow D_\sigma(\cP^\bu; \cQ^\bu)$; the result follows.
\end{proof}

We can now prove Theorems~\ref{thm:rabgood} and \ref{thm:relativeRichardson}.

\begin{proof}[Proof of Theorem~\ref{thm:rabgood}]

Let $\sigma = \sigma(\ab)$ and $\tau = \sigma(\bb)$. By Corollary \ref{cor:sigmaGood} and the Grothendieck spectral sequence, $\Rab \rightarrow S$ and $\Rst \rightarrow S$ have the same image $S'$, and one is a cohomological equivalence if and only if the other is. So it suffices to consider complete $\ab, \bb$, i.e., the schemes $\Rst$. 

We prove the result by induction on $\inv(\tau)$. The base case, $\tau = \mathrm{id}$, follows from Lemma \ref{lem:RtoS}. For the induction step, let $s$ be a simple transposition such that $\inv(\tau s) = \inv(\tau)-1$. Then $R_{\sigma, \tau} = R_{\sigma, (\tau s) \star s}$, so by Lemma \ref{lem:goodMoves} the morphism $R_{\sigma \star s, \tau s} \to S$ has the same image $S$ as $\Rst \to S$, and is a cohomological equivalence if and only if $\Rst \to S$ is. By induction, this image is $D_{(\tau s) \star (\sigma \star s)^{-1}}(\cpb; \cqb)$ and both maps are cohomological equivalences. Finally, Fact \ref{fact:demazure} implies that $(\tau s) \star (\sigma \star s)^{-1} = (\tau s) \star s^{-1} \star \sigma^{-1} = \tau \star \sigma^{-1}$, so the image of both maps is $D_{\tau \star \sigma^{-1}}(\cpb; \cqb)$. This completes the induction.
\end{proof}

\begin{proof}[Proof of Theorem~\ref{thm:relativeRichardson}]
Part (2) of Theorem \ref{thm:relativeRichardson} is Theorem \ref{thm:rabgood}. Part (1) is Theorem \ref{thm:RabLocal}.
\end{proof}

%%%

\section{Brill-Noether varieties and relative Richardson varieties}
\label{sec:brillnoether}

This section describes an example of relative Richardson varieties arising in Brill-Noether theory, which is a crucial ingredient in \cite{chan-pflueger-euler}. Let $E$ be an elliptic curve and $L\in \Pic^d (E)$, and let $V=H^0(E,L)$.  Suppose $p$ and $q$ are distinct closed points on $E$ such that $p-q$ is nontorsion in the Jacobian.  Assume $d \geq 1$. Define two complete flags of $V$ as follows.

\begin{eqnarray*}
P^i = \begin{cases}
V(-i p) & \mbox{ if $0 \leq i < d$}\\
0 & \mbox{ if $i = d$}.
\end{cases} &&
Q^j = \begin{cases}
V(-j q) & \mbox{ if $0 \leq j < d$}\\
0 & \mbox{ if $j = d$}.
\end{cases}
\end{eqnarray*}

Let $\cH$ be the rank $d$ vector bundle on $\Pic^d E$ whose fiber over $L$ is identified with $H^0(E,L)$; more precisely $\cH$ is the pushforward to $\Pic^d E$ of the Poincar\'e bundle on $\Pic^d E \times E.$
For discussion of Poincar\'e line bundles and this construction, see \cite[\S IV.2-3]{acgh}. One must be careful in this construction to assume $d \geq 2g-1$, so that $\cH$ is a vector bundle, and to only define $P^i$ for $d-i \geq 2g-1$ for the same reason. In this case $2g-1 =1$, so we have assumed $d \geq 1$ and $i < d$.
Then the flags $P^\bu$ and $Q^\bu$, defined above for each $L\in \Pic^d E$, globalize to a pair of flags $\cP^\bu, \cQ^\bu$.
\lemnow{\label{lem:g-a-special-case} The flags $\cP^\bu$ and $ \cQ^\bu$ are versal.
}

\begin{proof}
Notice that $\cpb,\cqb$ are transverse except when $L\cong \mathcal{O}(ap+bq)$ for some $a,b> 0$ and $a+b=d$. Since we assume $p-q$ is non-torsion, the integers $a,b$ are unique in this case, and we have

$$\dim P^i \cap Q^j
= \begin{cases}
\max(d-i-j, 0) & \mbox{ if } (i,j) \neq (a,b)\\
1 & \mbox{ if } (i,j) = (a,b).
\end{cases}
$$

It follows that if $L\cong \mathcal{O}(ap+bq)$ then the flags $P^\sbu, Q^\sbu$ are almost-transverse, i.e. their associated permutation is a simple transposition.
By Lemma~\ref{lem:versalityCriterion2}, it is enough to show that the subscheme of $\Pic^d(E)$ over which $\cpb$ and $\cqb$ fail to be transverse consists of exactly these finitely many reduced points; the only issue to check here is reducedness.

Locally around a point $L = \cO_E( a p + b q)$, where $a,b>0$ with $a+b=d$, the scheme is question is defined by the condition $\dim P^a \cap Q^{b} \ge 1$ (where the scheme structure can be defined with a degeneracy condition for a map of vector bundles). This can be reformulated as the condition that $P^{a-1} \cap Q^{b}$ and $P^{a} \cap Q^{b-1}$ are equal, viewing both of these as codimension $1$ subspaces of $P^{a-1} \cap Q^{b-1}$. The assertion of reducedness amounts to showing that two sections of the $\PP^1$-bundle $\PP (\cP^{a-1} \cap \cQ^{b-1}) \rightarrow \Pic^d E$ are transverse. This $\PP^1$-bundle can be identified with $\mathrm{Sym}^2 E$, regarded as a $\PP^1$-bundle over $\Pic^2 E$, by tensoring with $\cO((a-1)p+(b-1)q).$
The two sections are the effective divisors in $\mathrm{Sym}^2 E$ containing $p$ and containing $q$, respectively. These loci meet transversely at $p+q \in \mathrm{Sym}^2 E$, since the tangent space there may be identified with $T_p E \times T_q E$, and the tangent spaces to the curves identified with the tangent spaces of the two factors.
\end{proof}

It follows that the twice-pointed Brill Noether varieties $\Grdab(E,p,q)$, studied in \cite{chan-osserman-pflueger-gieseker, chan-pflueger-euler}, are examples of relative Richardson varieties.  See those papers for the definitions.
\begin{corollary}
The schemes $\Grdab(E,p,q)$ are relative Richardson varieties over $\Pic^d(E)$.
\end{corollary}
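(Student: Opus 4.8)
The plan is to unwind the definition of $\Grdab(E,p,q)$ from \cite{chan-osserman-pflueger-gieseker,chan-pflueger-euler} and recognize it directly as a relative Richardson variety $R_{\ab,\bb}(\cpb,\cqb)$, for the versal pair $\cpb,\cqb$ produced in Lemma~\ref{lem:g-a-special-case} and for a pair of nests of sets $\ab,\bb$ read off from the ramification data $\alpha,\beta$. Since $\deg L = d$ and $E$ has genus $1$, every $L \in \Pic^d(E)$ has $h^0(E,L)=d$, so $\cH$ is locally free of constant rank $d$ on $S=\Pic^d(E)$; and $\Grdab(E,p,q)$, with its natural scheme structure as a determinantal locus, is the closed subscheme of the Grassmann bundle $\Fl(0,d-r-1,d;\cH)\to S$ (the bundle of rank-$(r+1)$ subbundles of $\cH$) cut out by requiring that the tautological subbundle $\cV=\cV^{d-r-1}$ have vanishing sequence at $p$, respectively $q$, bounded below by $(\alpha_0,\alpha_1+1,\dots,\alpha_r+r)$, respectively $(\beta_0,\beta_1+1,\dots,\beta_r+r)$.

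First I would recall the classical dictionary between vanishing orders and incidence with an osculating flag. For a subspace $V\subseteq H^0(E,L)$ of dimension $r+1$ and the flag $P^a = V(-ap)$ of Section~\ref{sec:brillnoether}, the function $a\mapsto \dim(V\cap P^a)$ is weakly decreasing with unit steps and its jumps occur exactly at the vanishing sequence of $V$ at $p$. Hence ``the vanishing sequence of $V$ at $p$ is $\ge(\alpha_0,\alpha_1+1,\dots,\alpha_r+r)$'' is equivalent to the incidence conditions $\dim(V\cap P^{\alpha_i+i})\ge r+1-i$ for $i=0,\dots,r$, and the same determinantal equations cut out the corresponding locus in families. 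This identifies the condition at $p$ defining $\Grdab(E,p,q)$ with the relative Schubert variety $X_{\ab}(\cpb)$, where $\ab$ is the nest of sets with coranks $0<d-r-1<d$ and $A^{d-r-1}=\{\alpha_i+i : 0\le i\le r\}$ --- a genuine $(r+1)$-element subset of $[d]$, since $\alpha$ is nondecreasing and bounded as required for a $\grd$ on a genus-$1$ curve. Indeed one computes $r^{\sigma(\ab)}(d-r-1,\alpha_i+i)=r+1-i$, so the incidence conditions above are among the defining inequalities of $X_{\ab}(\cpb)$; and a direct check shows that $\Ess(\sigma(\ab))$ consists of boxes of the form $(d-r-1,\alpha_i+i)$, so by Fact~\ref{fact:essSet} these incidence conditions already include all the essential ones, hence cut out $X_{\ab}(\cpb)$ itself as a subscheme of the Grassmann bundle. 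Defining $\bb$ analogously from $\beta$ and replacing $\cpb$ by $\cqb$, the condition at $q$ gives $X_{\bb}(\cqb)$, and therefore
$$\Grdab(E,p,q)=X_{\ab}(\cpb)\cap X_{\bb}(\cqb)=R_{\ab,\bb}(\cpb,\cqb).$$

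It then remains only to check the hypotheses of Definition~\ref{loc:rab}: $S=\Pic^d(E)$ is an abelian variety, hence a smooth irreducible $k$-variety, and $\cpb,\cqb$ are versal by Lemma~\ref{lem:g-a-special-case} (this is where we use that $p-q$ is non-torsion). Hence $R_{\ab,\bb}(\cpb,\cqb)$ is a relative Richardson variety over $\Pic^d(E)$, as claimed.

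The only real work here is bookkeeping, and I expect it to be routine: one must pin down the exact indexing convention and determinantal scheme structure used for $\Grdab$ in \cite{chan-osserman-pflueger-gieseker,chan-pflueger-euler} so that it matches the nests $\ab,\bb$ above, and dispatch the minor boundary cases in which some $\alpha_i+i$ or $\beta_j+j$ reaches $d$ (base-point cases, which overlap the line bundles $\cO(ap+bq)$ where transversality of $\cpb,\cqb$ fails and $\Grdab$ degenerates or becomes empty). No genuine difficulty arises: throughout, both sides are visibly the vanishing locus of one and the same map of vector bundles on the Grassmann bundle, so the content of the corollary is really the versality statement of Lemma~\ref{lem:g-a-special-case} combined with this translation.
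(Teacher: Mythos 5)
Your proposal is correct and takes essentially the same route as the paper: the paper's proof of the corollary is exactly the assertion that it follows from Lemma~\ref{lem:g-a-special-case} (versality of $\cpb,\cqb$), the identification of $\Grdab$ with a determinantal subscheme of $\Gr(r+1,\cH)$, and the standard translation of pointed vanishing sequences into Schubert/rank conditions, which you have simply spelled out in detail. Your characterization of $\Ess(\sigma(\ab))$ should say ``is contained in'' rather than ``consists of'' the boxes $(d-r-1,\alpha_i+i)$ (it is the subset of true corners), but this does not affect the argument since $\Ess(\sigma(\ab))\subseteq\{d-r-1\}\times[d]$ is all that is needed.
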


\subsection{A conjecture in higher genus} We conclude with a conjectural generalization to higher genus that would provide an intriguing generalization of some results in Brill-Noether theory. To state our conjecture, we generalize the notion of versality to partial flags in the natural way: $\ell$ partial flags on a scheme $S$ are versal if the induced map from $\Fr(\cH)$ to a product of \emph{partial} flag varieties is smooth. One may then define relative Richardson varieties with respect to partial flags. Note that one must impose restrictions on nests of sets $\ab,\bb$ that may be used to define such relative Richardson varieties: $\Ess(\sigma(\ab))$ must contain only elements $(a,b)$ for which $\cP^b$ is defined, and likewise for $\bb$ and $\cqb$.

Let $C$ be a curve of genus $g$ with two marked points $p,q$. Fix an integer $N \geq 2g-1$. For every point $[L] \in \Pic^N(C)$, the vector space $H = H^0(C,L)$ has dimension $N-g+1$ and has two partial flags given by vanishing orders at $p$ and $q$, namely
\begin{eqnarray*}
P^a &=& H^0(C,L(-ap)) \mbox{ for $0 \leq a \leq N-2g+1$ }\\
Q^b &=& H^0(C,L(-bq)) \mbox{ for $0 \leq b \leq N-2g+1$. }
\end{eqnarray*}
The upper bounds on $a,b$ ensure that $P^a$ has codimension $a$ in $H$, since $L(-ap)$ and $L(-bq)$ are nonspecial; this is analogous to the need in \cite[\S IV.3]{acgh} to twist by a fixed divisor in order to work with lines bundles of degree at least $2g-1$. This construction globalizes, giving a vector bundle $\cH$ with partial flags $\cpb,\cqb$, each with coranks $0,1,\cdots,N-2g+1$.

\begin{conj} \label{conj:higherG}
If $(C,p,q)$ is a general twice-marked curve of genus $g$, then for all $N\ge 2g-1$ the flags $\cpb, \cqb$ in $\cH$ described above are versal on $\Pic^N(C)$.
\end{conj}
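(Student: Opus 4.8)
The plan is to reduce versality to a Petri-type transversality statement for twice-pointed Brill--Noether loci on $C$, and then to establish that statement by degenerating $(C,p,q)$ to a chain of elliptic curves. First, note that for $[L]\in\Pic^N(C)$ one has $P^a\cap Q^b=H^0\big(C,L(-ap-bq)\big)$, so the permutation $\sigma$ recording the relative position of $\cpb,\cqb$ at $[L]$ is determined by $r^\sigma(a,b)=h^0(L(-ap-bq))$, and $\widetilde D_\sigma(\cpb;\cqb)$ is exactly a locally closed twice-pointed Brill--Noether stratum in $\Pic^N(C)$. Since $\Pic^N(C)$ is smooth, the partial-flag analogue of Proposition~\ref{def:versalityCriterion} (respectively Lemma~\ref{lem:versalityCriterion2}) says that $\cpb,\cqb$ are versal at $[L]$ if and only if the map $\delta_{[L]}\colon T_{[L]}\Pic^N(C)=H^1(C,\cO_C)\to M\big(H;P^\bu,Q^\bu\big)$ is surjective. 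A standard deformation-theory computation identifies $\delta_{[L]}$ in familiar Brill--Noether terms: a class $\xi\in H^1(\cO_C)$ deforms $L$, and the resulting relative deformation of the two flags is recorded by the cup products $\xi\smile(-)\colon H^0(L(-ap-bq))\to H^1(L(-ap-bq))$ for $(a,b)\in\Ess(\sigma)$; dualizing by Serre duality, $\delta_{[L]}$ is surjective if and only if the ``twice-pointed Petri map,'' assembled --- in a way compatible with the vanishing filtrations at $p$ and $q$ --- from the multiplication maps
$$\mu_{a,b}\colon H^0\big(L(-ap-bq)\big)\otimes H^0\big(K_C\otimes L^\vee(ap+bq)\big)\longrightarrow H^0(K_C)$$
is injective. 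Thus the conjecture is equivalent to the injectivity of this twice-pointed Petri map for a general $(C,p,q)$, at every $[L]\in\Pic^N(C)$ and every $N\ge 2g-1$; this would in turn identify $\Grdab(C,p,q)$ with a relative Richardson variety over $\Pic^d(C)$, generalizing the corresponding statement for elliptic curves.

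Second, I would prove the Petri-type injectivity by a limit linear series argument, organized as an induction on $g$. Degenerate $(C,p,q)$ to a chain of $g$ elliptic curves with $p$ and $q$ on the two end components, as in \cite{chan-pflueger-euler}; equivalently, attach one elliptic tail at a time. Using the theory of limit linear series with two marked points (Eisenbud--Harris for chains of elliptic curves, or Osserman's linked/refined framework), one shows that the twice-pointed Petri map on the smooth curve is block-triangular with respect to the multidegree and vanishing filtrations at the nodes, and that its diagonal blocks are single-component Petri maps whose injectivity is elementary, in the spirit of the reducedness computation at the end of the proof of Lemma~\ref{lem:g-a-special-case} (with a marked point replaced, on interior components, by a node). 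A smoothing argument in the style of the classical proofs of the Brill--Noether and Gieseker--Petri theorems then transfers the resulting expected-dimension and smoothness statements for the limit degeneracy loci on the chain to the degeneracy loci on a general smooth $(C,p,q)$, which by the criterion of the first step is exactly versality. Since the twist by $ap+bq$ enters only through the line bundle $K_C\otimes L^\vee(ap+bq)$, a single inductive scheme handles all $N\ge 2g-1$ at once.

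The hard part, and the reason this remains a conjecture, is the second step: there is at present no ``twice-marked Gieseker--Petri'' theorem giving smoothness --- not merely expected dimension --- of twice-pointed Brill--Noether loci for a general pointed curve of arbitrary genus, and the degeneration argument must be sharp enough to detect the scheme structure of the degeneracy loci, not just their supports. Versality is a statement about smoothness of a morphism and is therefore sensitive to nilpotents, precisely the subtlety that already made the end of the proof of Lemma~\ref{lem:g-a-special-case} delicate in genus one. Controlling this simultaneously over all of $\Pic^N(C)$, all $N\ge 2g-1$, and all permutations $\sigma$ (with $\Ess(\sigma)$ in the admissible range), with the correct scheme structure, is what is missing. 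A realistic intermediate goal would be to prove the conjecture in small genus, or for $N$ in a restricted range, by directly analyzing the twice-pointed Petri map and comparing the output with \cite{chan-osserman-pflueger-gieseker}.
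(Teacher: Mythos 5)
This statement is labeled a \emph{conjecture} in the paper, and the paper offers no proof of it; there is therefore no argument in the text against which to compare yours. What you have written is not a proof but a strategy outline, and you are admirably explicit about this, so the evaluation can only address whether the strategy is sound and whether you have correctly located the obstruction.

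Your first step --- translating versality of $\cpb,\cqb$ on $\Pic^N(C)$ into a Petri-type injectivity via Proposition~\ref{def:versalityCriterion} and Lemma~\ref{lem:versalityCriterion2} (in their partial-flag forms), identifying $\delta_{[L]}$ with cup-product maps against $\xi\in H^1(\cO_C)$, and Serre-dualizing --- is the right reduction, and it is consistent with the discussion following Conjecture~\ref{conj:higherG}, where the authors note that the conjecture would give a new proof of the Gieseker--Petri theorem and of the main theorem of \cite{chan-osserman-pflueger-gieseker}. One refinement worth making explicit: what must be controlled is not a single map on a fiber of $G^r_d(C)\to\Pic^d(C)$ but the surjectivity of $\delta_{[L]}$ at \emph{every} $[L]\in\Pic^N(C)$, equivalently (via Lemma~\ref{lem:versalityCriterion2}) smoothness and expected codimension of \emph{every} admissible degeneracy stratum $D_\sigma(\cpb;\cqb)\subseteq\Pic^N(C)$, not merely of the top-dimensional Brill--Noether loci. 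You do flag this at the end (``all permutations $\sigma$ with $\Ess(\sigma)$ in the admissible range''), but the framing as ``\emph{the} twice-pointed Petri map'' can obscure that the condition is a family of transversality statements indexed by $\sigma$.

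Your second step --- degeneration to a chain of elliptic curves, with the diagonal blocks handled by the genus-one computation already present in Lemma~\ref{lem:g-a-special-case} --- is the natural continuation of the program of \cite{chan-pflueger-euler}, and you correctly identify why it has not closed: versality is a smoothness-of-morphism statement, hence sensitive to nilpotents, and no limit-linear-series argument currently available is known to detect the scheme structure of all the strata $D_\sigma$ on a general $(C,p,q)$ (the genus-one reducedness argument in Lemma~\ref{lem:g-a-special-case} is exactly the kind of delicacy that must be propagated). That is indeed the genuine gap, and it is why the statement is a conjecture rather than a theorem. In short: your reduction is correct and aligns with the authors' own remarks, your proposed degeneration is the plausible route, and you have accurately named what is missing; no further criticism is warranted beyond the refinement noted above.
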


This conjecture would show that for general curves $C$, Brill-Noether varieties $G^r_d(C)$ are relative Richardson varieties (in the more general sense where partial flags are allowed). Indeed, by choosing $N$ sufficiently large that there are integers $a,b$ with $N-a,N-b \geq 2g-1$ and $N-a-b = d$, one may embed $G^r_d(C)$ in the Grassmannian bundle $\Gr(r+1, \cH)$ by twisting by the divisor $ap + bq$. This gives the following isomorphism.
$$G^r_d(C) \cong \{ (L,V) \in \Gr(r+1, \cH):\ V \subseteq (\cP^a)_x  \mbox{ and } V \subseteq (\cQ^b)_x \},$$
where $\cH$ is the vector bundle on $\Pic^N(C)$ given by $\cH|_{[L]} \cong H^0(C,L(ap+bq)).$
Conjecture \ref{conj:higherG} would therefore imply that $G^r_d(C)$ is isomorphic over $\Pic^d(C)$ to a relative Richardson variety.

More generally, Brill-Noether varieties with ramification $G^{r,\alpha,\beta}_d(C,p,q)$ (see \cite{chan-osserman-pflueger-gieseker} for definitions) may be described in a similar manner, and Conjecture \ref{conj:higherG} also implies that they are isomorphic over $\Pic^d(C)$ to relative Richardson varieties.

We remark that Theorem \ref{thm:kwy} generalizes readily to versal partial flags: one may either replace complete flag varieties with partial flag varieties throughout the proof, or deduce the general result from Theorem \ref{thm:kwy} by locally extending the versal partial flags to versal complete flags. Conjecture \ref{conj:higherG} would give a new way to study singularities of $G^{r,\alpha,\beta}_d(C,p,q)$ for general $(C,p,q)$, and in particular would provide a new proof of the Gieseker-Petri Theorem and the main theorem of \cite{chan-osserman-pflueger-gieseker} characterizing the singular locus of $G^{r,\alpha,\beta}_d(C,p,q)$. It would also generalize these results from linear series to flags of linear series.

\bigskip

\bigskip

%%%%%
%%%%%
%
% Bibliography copied from main.tex
%
%%%%%
%%%%%

\bibliographystyle{amsalpha}
\bibliography{my}

\end{document}